\title{Variational formulation of the Fokker-Planck equation with decay: a particle approach}
\author{Mark Peletier and Michiel Renger\footnote{Department of Mathematics and Computer Sciences and Institute for Complex Molecular Systems, Technische Universiteit Eindhoven. The research of the first author has benefited from support by the Leverhulme Trust and 
by the Initial Training Network ``FIRST'' of the Seventh Framework Programme of the European Community (grant agreement number 238702).} and Marco Veneroni\footnote{Department of Mathematics, University of Pavia.}}
\date{\today}
\newcounter{theorem}
\DeclareMathOperator{\Law}{Law}
\renewcommand{\ae}{\textrm{ a.e.}}
\let\ds\displaystyle
\def\Prob{\mathrm{Prob}}
\def\weakto{\rightharpoonup}
\def\longrightharpoonup{\relbar\joinrel\rightharpoonup}
\def\longweakto{\longrightharpoonup}
\def\hq{\hskip 0.5em}
\def\R{\mathbb R}
\def\N{\mathbb N}
\def\P{\mathcal P}
\def\E{\mathcal E}
\def\M{\mathcal M}
\def\F{\mathcal F}
\def\K{\mathcal K}
\def\J{\mathcal J}
\def\S{\mathcal S}
\def\E{\mathcal E}
\def\NN{_{N\!N}}
\def\ND{_{N\!D}}
\def\DD{_{D\!D}}
\def\NT{_{N\!T}}
\def\Diff{_\text{\textit{Df}}}
\def\FP{_\text{\textit{FP}}}
\def\Dec{_\text{\textit{Dc}}}
\def\DDec{_\text{\textit{DfDc}}}
\def\FPDec{_\text{\textit{FPDc}}}
\def\Kcontracted{\overline K^h\DDec}
\def\init{\overline}
\def\lapl{\Delta}
\def\grad{\nabla}
\renewcommand{\div}{\mathop{\mathrm{div}}}
\DeclareMathOperator*\argmin{arg\,min}
\DeclareMathOperator\Grad{grad}
\newtheorem{defn}[theorem]{Definition}
\newtheorem{theo}[theorem]{Theorem}
\newtheorem{coro}[theorem]{Corollary}
\newtheorem{lem}[theorem]{Lemma}
\newtheorem{conj}[theorem]{Conjecture}
\newenvironment{rem}%
  {\par\medbreak\refstepcounter{theorem}%
    \noindent\textbf{Remark~\thetheorem. }}%
  {\qed\par\medskip}
\begin{document}
\maketitle
\begin{abstract}We introduce a stochastic particle system that corresponds to the Fokker-Planck equation with decay in the many-particles limit, and study its large deviations. We show that the large-deviation rate functional corresponds to an energy-dissipation functional in a Mosco-convergence sense. Moreover, we prove that the resulting functional, which involves entropic terms and the Wasserstein metric, is again a variational formulation for the Fokker-Planck equation with decay.
\end{abstract}


\newpage
 
 \section{Introduction}

\subsection{On the origin of Wasserstein gradient flows}

Since the introduction of the Wasserstein gradient flows in 1997--8~\cite{JordanKinderlehrerOtto97,JKO1998, Otto1998, Otto2001} it has become clear that a very large number of well-known parabolic partial differential equations and other evolutionary systems can be written as gradient flows. Examples of these are non-linear drift-diffusion equations~\cite{Agueh2005}, diffusion-drift equations with non-local interactions~\cite{CarrilloMcCannVillani2003}, higher-order parabolic equations~\cite{Otto1998a,GiacomelliOtto2001,Glasner2003,MatthesMcCannSavare2009,GianazzaSavareToscani2009}, moving-boundary problems~\cite{Otto1998a,PortegiesPeletier2010}, and chemical reactions~\cite{Mielke11}. The parallel development of rate-independent systems introduced similar variational structures for friction~\cite{EfendievMielke06}, delamination~\cite{KocvaraMielkeRoubicek06}, plasticity~\cite{Mielke04}, phase transformations~\cite{MielkeTheilLevitas02},  hysteresis~\cite{MielkeTheil04}, and various other  phenomena. Further generalisations are suggested by taking limits of gradient flows, as in the case of Kramers' equation for chemical reactions~\cite{Arnrich2012}.

This multitude of gradient-flow structures does raise questions. Before 1997, for instance, it was widely believed that convection-diffusion equations could not be gradient flows. This belief was contradicted by~\cite{JordanKinderlehrerOtto97,JKO1998}; apparently the question `which systems can be gradient flows' is a non-trivial one. As another example, common building blocks of these gradient-flow structures, such as the Wasserstein metric, appear to be mathematical, non-physical constructs---can one give these an interpretation in terms of physics, chemistry, or other modelling contexts?

In~\cite{Adams2011} the authors give a suggestion for an organising principle behind the observed variety in systems and gradient flows. For the example of the entropy-Wasserstein gradient flow (see below) they show how the gradient-flow structure itself is closely related to the probabilistic structure of a system of stochastic particles. This connection explains many aspects of the gradient flow, such as the origin of both the entropy and the Wasserstein metric and the interpretation of the discrete-time approximation. 

The result of \cite{Adams2011} also suggests that this connection between gradient-flow structures and stochastic particle systems may be much more general. 
In this paper we explore this idea for the following diffusion equation with convection and decay:
\begin{equation}
\label{eq: diffusion drift decay}
	\partial_t u = \lapl u + \div \!\left( u \, \grad\Psi \right) - \lambda u, 	\qquad \text{in }\R^d \times (0,\infty), \\
\end{equation}
with $\Psi\in C_b^2(\R^d)$ and $\lambda\geq 0$. We contribute two main results to the theory of this type of equations: first, we derive a new gradient-flow  formulation for equation~\eqref{eq: diffusion drift decay}, and secondly, since this formulation is constructed along the lines of \cite{Adams2011}, we automatically connect this gradient flow to microscopic systems of diffusing particles, and show that the gradient-flow structure arises from the probabilistic structure of these particle systems. 

%
%

The paper is organised as follows. In the remainder of this introductory section we develop the required concepts and formulate the main aim of this paper in a little more detail. Next, we recall the central notions of this paper in Section~\ref{sec: background}. We proceed with our microscopic models and the corresponding results in Sections~\ref{sec: FP} and \ref{sec: diff dec}, and we wrap up with a general discussion in Section~\ref{sec: discussion}. In the Appendix we give a description and the proof of an existing large-deviation result in a language that is more suited to this paper.

\subsection{Variational formulations} 
\label{sec: Wasserstein gradient flows}

In this paper we study iterative variational schemes on some space $\mathcal X$ of the form
\begin{equation}
\label{eq:variational scheme}
 \text{Given $\rho^{k-1}$, choose }\quad \rho^k\in \argmin_{\rho\in \mathcal X}\; \K^h(\rho|\rho^{k-1}),
\end{equation}
which will approximate the solution of an evolution equation as $h \to 0$. The following examples illustrate the main ideas.

\paragraph{Example 1: Hilbert-space gradient flows.} If $\mathcal X$ is a Hilbert space and the functional $\K^h$ is of the form
\begin{equation}
\label{eq:Hilbert minimisation}
	\K^h(\rho|\init\rho) = \F(\rho) + \frac1{2h} \|\rho-\init\rho\|^2
\end{equation}
for some smooth functional $\F$, then the minimisation problem~\eqref{eq:variational scheme} gives the stationarity condition
\[
\frac{\rho^k -\rho^{k-1}}h = -\Grad \F(\rho^k).
\]
In this one recognises the backward Euler approximation of the continuous-time gradient flow:
\begin{equation}
\label{eq:H-GF}
\partial_t \rho = -\Grad \F(\rho).
\end{equation}

The  time-discrete variational form \eqref{eq:Hilbert minimisation} illustrates how in gradient flows the evolution is driven by a trade-off between two competing effects. An \emph{energy functional} $\F:\mathcal X\to\R\cup\{\infty\}$ drives the system towards lower values of the energy; at the same time a \emph{dissipation mechanism} (here quantified by the norm $\|\cdot\|$) acts as a selection principle among all directions that decrease $\F$.
\medskip

If one chooses $\mathcal X=L^2(\R^d)$ and $\F(\rho)=\frac12\int\!\left|\grad \rho\right|^2$, then \eqref{eq:H-GF} simply becomes the diffusion equation. However, it is not possible to describe convection in this way. The next example shows that convection-diffusion equations are nevertheless gradient flows, in a more general context.

\paragraph{Example 2: Wasserstein gradient flows.} Instead of a Hilbert space, we now consider the metric space $\mathcal X=\P_2(\R^d)$ of probability measures with finite second moment, equipped with the Wasserstein metric $d$ (see Section~\ref{sec: wasserstein distance}). Similarly to \eqref{eq:Hilbert minimisation}, let (where the subscript $\textit{FP}$ stands for `Fokker-Planck'):
\begin{equation}
\label{def: KFP}
	\K^h\FP(\rho|\init\rho) := \tfrac12 \F(\rho) - \tfrac12 \F(\init\rho) + \frac{1}{4h} d^2\!\left(\init\rho,\rho\right),
\end{equation}
where $\mathcal{F}(\rho)=\S(\rho) + \E(\rho)$ is the Helmholtz free energy, and 
\begin{align}
\label{def: entropy}
&\S(\rho) := \begin{cases}
     \ds\int \! \log f(y) \, \rho(dy),     & \text{if }\rho(dy) = f(y) dy,\\
     \;\infty,                                           & \text{otherwise},
   \end{cases}\\
&\E(\rho) := \int \! \Psi (y)\, \rho(dy).\notag
\end{align}
are the (negative) Gibbs-Boltzmann entropy and the energy arising from a potential $\Psi$. Note that in comparison to~\eqref{eq:Hilbert minimisation} we have subtracted the free energy of the previous state, and multiplied the expression by $1/2$. Both are done in view of the connection to large-deviation rate functionals that we establish below; of course neither change affects the minimisation properties of $\K^h\FP(\,\cdot\,;\init\rho)$.

It was first observed by Jordan, Kinderlehrer and Otto~\cite{JordanKinderlehrerOtto97,JKO1998} that the time-discrete process defined by~\eqref{eq:variational scheme} and~\eqref{def: KFP} converges to the solution of the Fokker-Planck equation:
\begin{equation}
\label{eq: Fokker-Planck}
	\partial_t u = \lapl u + \div\!\left( u \, \grad\Psi\right), \qquad \text{in }\R^d \times (0,\infty).
\end{equation}
We see that, in the same sense as the previous example, the Fokker-Planck equation is a gradient flow of free energy with respect to the Wasserstein metric. For future reference, we duplicate their main theorem here (where the superscript $a$ denotes absolutely continuous):
\begin{theo}[\cite{JKO1998}]
\label{theo: JKO} Let  $\rho^0 \in \P_2^a(\R^d)$, and define the sequence $\{\rho^{h,k}\}_{k \geq 0}$ by:
	\begin{align*}
		&\rho^{h,0} = \rho^0, \\
		&\rho^{h,k} \in \underset{\rho \in \P_2(\R^d)}{\arg \min} \,  	\K^h\FP(\rho|\rho^{h,k-1}), & k\geq 1.
	\end{align*}
	These minimisers exist uniquely, and as $h\to 0$, the function $\rho^{h,\lfloor t/h \rfloor}$ converges weakly in $L^1(\R^d\times(0,T))$ to the solution of \eqref{eq: Fokker-Planck} with initial condition $\rho^0$.
\end{theo}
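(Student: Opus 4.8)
The plan is to follow the minimising-movement strategy of Jordan, Kinderlehrer and Otto. First I would show that each step of the scheme is well posed. Fix $\sigma\in\P_2(\R^d)$ with $\F(\sigma)<\infty$. The map $\rho\mapsto\K^h\FP(\rho|\sigma)$ is bounded below: since $\Psi\in C_b^2$ and the negative part of the entropy is controlled by the second moment, $\F(\rho)\ge -C(1+\int|y|^2\,\rho(dy))$, while $\int|y|^2\,\rho(dy)\le 2d^2(\sigma,\rho)+2\int|y|^2\,\sigma(dy)$, so for $h$ small the quadratic Wasserstein penalty dominates. It is also narrowly lower semicontinuous (lower semicontinuity of $\S$, $\E$ and of $d^2(\sigma,\cdot)$) and its sublevel sets are narrowly precompact (the Wasserstein term bounds second moments, so Prokhorov applies). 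Hence a minimiser exists. Uniqueness follows from strict convexity along \emph{linear} interpolations $\rho_\tau=(1-\tau)\rho_0+\tau\rho_1$: $\S$ is strictly convex, $\E$ is affine, and $d^2(\sigma,\cdot)$ is convex along linear interpolations (glue the optimal plans). This proves the existence and uniqueness claim.

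Next I would derive the discrete Euler--Lagrange relation. Given the minimiser $\rho^{h,k}$, perturb it by a near-identity push-forward, $\rho_\varepsilon:=(\id+\varepsilon\xi)_\#\rho^{h,k}$ with $\xi\in C_c^\infty(\R^d;\R^d)$, and set $\tfrac{d}{d\varepsilon}\big|_{\varepsilon=0}\K^h\FP(\rho_\varepsilon|\rho^{h,k-1})=0$. The variations of $\tfrac12\S$ and $\tfrac12\E$ are $-\tfrac12\int f^{h,k}\,\div\xi\,dx$ and $\tfrac12\int\xi\cdot\grad\Psi\,d\rho^{h,k}$; the variation of $\tfrac1{4h}d^2(\rho^{h,k-1},\cdot)$ equals $\tfrac1{2h}\int\langle y-t_k(y),\xi(y)\rangle\,\rho^{h,k}(dy)$, where $t_k$ is the optimal transport map from $\rho^{h,k}$ to $\rho^{h,k-1}$ (Brenier). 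Choosing $\xi=\grad\varphi$ for $\varphi\in C_c^\infty(\R^d)$ and combining with the identity $\int\varphi\,d\rho^{h,k}-\int\varphi\,d\rho^{h,k-1}=\int[\varphi(y)-\varphi(t_k(y))]\,\rho^{h,k}(dy)=\int\grad\varphi(y)\cdot(y-t_k(y))\,\rho^{h,k}(dy)+O\!\big(d^2(\rho^{h,k-1},\rho^{h,k})\big)$ gives a per-step weak identity of the form $\int\varphi\,d\rho^{h,k}-\int\varphi\,d\rho^{h,k-1}=h\big[\int f^{h,k}\lapl\varphi\,dx-\int\grad\Psi\cdot\grad\varphi\,d\rho^{h,k}\big]+R_k$ with $|R_k|\le C\,d^2(\rho^{h,k-1},\rho^{h,k})$.

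For the a priori estimates I would use the minimising property $\K^h\FP(\rho^{h,k}|\rho^{h,k-1})\le\K^h\FP(\rho^{h,k-1}|\rho^{h,k-1})=0$, i.e. $\tfrac12\F(\rho^{h,k})-\tfrac12\F(\rho^{h,k-1})+\tfrac1{4h}d^2(\rho^{h,k-1},\rho^{h,k})\le0$. Telescoping over $k$ yields a uniform bound on $\F(\rho^{h,N})$, hence on second moments and on $\int f^{h,k}\log f^{h,k}$, together with $\sum_k d^2(\rho^{h,k-1},\rho^{h,k})\le Ch$. By Cauchy--Schwarz the latter gives $d(\rho^{h,\lfloor s/h\rfloor},\rho^{h,\lfloor t/h\rfloor})\le C\sqrt{|t-s|+h}$, an approximate equicontinuity in time for the Wasserstein metric. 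Combining this with the pointwise tightness from the second-moment bound and a refined Arzel\`a--Ascoli / Helly argument, a subsequence of $t\mapsto\rho^{h,\lfloor t/h\rfloor}$ converges narrowly, locally uniformly in $t$, to a curve $\rho(t)$; the uniform entropy bound upgrades this, through the Dunford--Pettis criterion (equi-integrability), to weak convergence of the densities in $L^1(\R^d\times(0,T))$. Summing the per-step identities from $k=1$ to $\lfloor t/h\rfloor$ makes the left side telescope and the main term on the right a Riemann sum, while $\sum_k R_k\le C\sum_k d^2(\rho^{h,k-1},\rho^{h,k})\le Ch\to0$; passing to the limit produces the weak formulation of \eqref{eq: Fokker-Planck} with initial datum $\rho^0$. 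Since that problem has a unique weak solution in this class, the full family converges, not merely a subsequence.

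I expect the main obstacle to be twofold. The first delicate point is the differentiation of the Wasserstein term: it requires the existence, uniqueness and measurability of the optimal map $t_k$ (Brenier's theorem, hence $\rho^{h,k}$ must be shown absolutely continuous, which follows from its finite entropy), the admissibility of the push-forward perturbations, and a careful verification that the one-sided derivative has the stated form. The second is the passage to the limit: one must ensure the entropy-derived term $\int f^{h,k}\lapl\varphi\,dx$ converges, which uses precisely the weak-$L^1$ compactness of the densities, and one must keep the discretisation error under control, which is exactly why the budget $\sum_k d^2(\rho^{h,k-1},\rho^{h,k})=O(h)$ suffices to kill $\sum_k R_k$ in the limit.
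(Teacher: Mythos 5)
Your proposal is correct and is essentially the argument of the cited source: the paper itself does not prove this theorem but quotes it from \cite{JKO1998}, and your minimising-movement scheme (existence and uniqueness via lower semicontinuity and convexity along linear interpolants, Euler--Lagrange identity from push-forward perturbations with remainder bounded by $\tfrac12\sup|\lapl\varphi|\,d^2$, telescoped energy estimates giving $\sum_k d^2\leq Ch$, and weak-$L^1$ compactness to pass to the weak form) is precisely the JKO proof, which the paper also adapts almost verbatim in its own proof of Theorem~\ref{theo: diff dec gradient flow}. The only step you compress is the closing of the a priori estimates, where the moment bound and the lower bound on $\F(\rho^{h,N})$ are interdependent and are decoupled in \cite{JKO1998} (and in the paper's decay analogue) via the sublinear bound $-\S(\rho)\leq C(M_2(\rho)+1)^\alpha$ with $\alpha<1$, but this is a standard absorption argument and not a genuine gap.
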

\noindent Actually, \cite{JKO1998} provides an argument to extend this result to weak convergence in $L^1(\R^d)$ for almost every $t\in(0,T)$ and strong convergence in $L^1(\R^d,(0,T))$.

While various generalisations of Hilbert-space gradient flows were known for some time~\cite{AlmgrenTaylorWang93,DeGiorgi2006,LuckhausSturzenhecker95}, this result meant a breakthrough by extending the concept to a large and important class of evolution equations. In addition to inspiring a great amount of research into gradient flows in Wasserstein spaces and in general metric spaces, in a variety of functional-analytic settings~\cite{Otto2001,Mielke05a,Ambrosio2008,RossiMielkeSavare2008}, it also gave rise to many fruitful connections between partial differential equations, optimal transport theory, geometry, functional inequalities, and probability; see \cite{Villani2003,Villani2009} for an overview.

\medskip

\textbf{Example 3: exponential decay.} 
As in some other cases~\cite{AlmgrenTaylorWang93,LuckhausSturzenhecker95}, it will be useful to consider more general time-discrete constructions, namely  of the form
\begin{equation}
\label{def:example decay}
  \K^h(a|\init a) \;= \F(a;\init a) + f^h(\init a,a),
\end{equation}
for some function $f^h$. In this example, fix some $0<r^h<1$ and let the state space be $\mathcal X=\R^+$. Take for $\F$ a mixing entropy with parameter $\init a$,
\begin{equation}
\label{def:example decay energy}
  \F(a;\init a) := a\log a +(\init a-a)\log (\init a- a),\qquad\text{for }0<a<\init a,
\end{equation}
and for $f^h$ the expression\footnote{As suggested by one of the referees, this particular form \eqref{def:example decay}$+$\eqref{def:example decay energy} arises as the quenched large-deviation rate of a system of independent exponentially distributed decay processes, with $a=\frac1n\#\{\text{non-decayed }X_i(h)\}$ and $\bar a=\frac1n\#\{\text{non-decayed } X_i(0)\}$.}
\begin{equation}
\label{def:example decay diss}
  f^h(\init a,a) := -a\log r^h - (\init a - a)\log (1-r^h).
\end{equation}
Then, the unique minimiser of~\eqref{def:example decay} is $a=r^h\;\init a$. While this construction may appear to be a convoluted way of arriving at this result, in fact it appears \emph{naturally} in the context of a specific stochastic system of particles, as we show below. In the limit $h\to0$ it will describe the term $-\lambda u$ in~\eqref{eq: diffusion drift decay} which is associated with decay, as is illustrated by the following simple result: 
\begin{theo}
\label{theo:simple-x}
Let $\K^h$ be given as in~(\ref{def:example decay}--\ref{def:example decay diss}) with $r^h:=e^{-\lambda h}$. Let $a^0\in \R^+$ be fixed and define the sequence $\{a^{h,k}\}_{k\geq0}$ by 
\begin{align*}
&a^{h,0} = a^0,\\
&a^{h,k} \in \argmin_{a \in \R^+}\; \K^h(a|a^{h,k-1}), \qquad k\geq1.
\end{align*}
Then as $h\to0$ the function $t\mapsto a^{h,\lfloor t/h\rfloor}$ converges in time to the solution $t\mapsto a^0 e^{-\lambda t}$ of $\partial_t u = -\lambda u$.
\end{theo}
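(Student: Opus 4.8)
The plan is to exploit the fact that, in this scalar example, the minimisation can be solved in closed form, so that the scheme reduces to an elementary geometric recursion. First I would verify the claim announced in Example~3 that the unique minimiser of $\K^h(\,\cdot\,|\init a)$ is $a = r^h\,\init a$. Set
\[
 g(a) := \K^h(a|\init a) = a\log a + (\init a-a)\log(\init a-a) - a\log r^h - (\init a-a)\log(1-r^h)
\]
for $a\in(0,\init a)$, with $g\equiv+\infty$ on the rest of $\R^+$ (extending $\S$ by $+\infty$ as usual, and noting that $g$ extends continuously to the endpoints via $0\log 0=0$). A direct computation gives
\[
 g'(a) = \log\frac{a}{\init a - a} - \log\frac{r^h}{1-r^h}, \qquad g''(a) = \frac1a + \frac1{\init a-a} > 0 \quad\text{on }(0,\init a).
\]
Strict convexity, together with $g'(a)\to-\infty$ as $a\downarrow 0$ and $g'(a)\to+\infty$ as $a\uparrow\init a$, shows that $g$ has exactly one interior stationary point, which is therefore its global minimiser over $\R^+$; solving $g'(a)=0$ yields $a = r^h\,\init a$.

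Hence the iteration collapses to $a^{h,k} = r^h a^{h,k-1}$ for $k\ge 1$, so that $a^{h,k} = (r^h)^k a^0 = e^{-\lambda h k}a^0$ by the choice $r^h = e^{-\lambda h}$. To pass to the limit, fix $t\ge 0$: then $a^{h,\lfloor t/h\rfloor} = \exp\!\bigl(-\lambda h\lfloor t/h\rfloor\bigr)a^0$, and since $t-h < h\lfloor t/h\rfloor \le t$ the exponent tends to $-\lambda t$ as $h\downarrow 0$. By continuity of the exponential, $a^{h,\lfloor t/h\rfloor} \to a^0 e^{-\lambda t}$, in fact uniformly for $t$ in any bounded interval, and this limit is exactly the solution of $\partial_t u = -\lambda u$ with $u(0)=a^0$.

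There is essentially no hard step here; the statement is included only as a transparent illustration of the construction~\eqref{def:example decay}. The only points requiring a little care are, first, confirming in the verification step that the formal critical point of $g$ is a genuine global minimiser over all of $\R^+$ (through convexity and the boundary behaviour above, rather than just the first-order condition), and second, making precise the sense in which ``converges in time'' is meant — which I would state as locally uniform convergence on $[0,\infty)$, as obtained above.
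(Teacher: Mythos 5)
Your proposal is correct and follows essentially the same route as the paper: the paper simply asserts (in Example~3) that the unique minimiser of $\K^h(\,\cdot\,|\init a)$ is $r^h\,\init a$ and then remarks that $a^{h,k}=a^0e^{-\lambda kh}$, which is exactly your recursion and limit argument. Your explicit verification of the minimiser via convexity and the boundary behaviour of $g'$ just fills in the detail the paper leaves implicit, so there is nothing to correct.
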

\noindent
The proof follows from remarking that $a^{h,k} = a^0 e^{-\lambda kh}$.


Below we will consider this construction in integrated form:
\begin{equation*}
	\K^h\Dec(\rho|\init \rho) \;:=\; -\S(\init \rho) + \S(\rho) + \S(\init\rho\!-\!\rho) - \big|\rho\big|\log r^h - \big|\init\rho\!-\!\rho\big|\log (1-r^h)
\end{equation*}
(the subscript \textit{Dc} stands for `Decay equation') on the space of non-negative Borel measures $\M^+(\R^d)$ with the total variation norm $\big|\rho\big| := \rho(\R^d)$. Observe that compared to (\ref{def:example decay}--\ref{def:example decay diss}), we have an additional term $-\S(\init\rho)$. This term does not influence the minimiser, but we have added it here to ensure that the minimum is $0$, which will be needed below.

\medskip

\textbf{Synthesis of examples 2 and 3.} In the results that we prove in this paper, the last two examples are merged in a single variational scheme. In the simplest case, for instance, where $\Psi\equiv0$, the discrete algorithm approximating~\eqref{eq: diffusion drift decay} becomes
\begin{multline}
\label{min:intro-rhok-DD}
\rho^k \in\argmin_{\rho\in\M^+(\R^d)} \ \inf_{\rho\ND : |\rho+\rho\ND|=|\rho^{k-1}|} 
-\tfrac12 \S(\rho+\rho\ND) -\tfrac12 \S(\rho^{k-1}) 
  + \tfrac{1}{4h} d^2(\rho +\rho\ND,\rho^{k-1})\\ 
+ \S(\rho) + \S(\rho\ND) -|\rho|\log r^h - |\rho\ND|\log (1-r^h).
\end{multline}

To interpret the formula above, one should realise that the infimum over the measure $\rho\ND$ in the formula above  represents a choice: in each time step, the system designates a portion $\rho\ND\geq 0$ for decay (the index $N\!D$ stands for `Normal to Decayed'), while the other part $\rho\geq 0$ remains `normal'. 

The terms inside the infimum can be written as $\K^h\FP(\rho+\rho\ND|\rho^{k-1})+ \K^h\Dec(\rho|\rho+\rho\ND) $, and one can understand the structure of~\eqref{min:intro-rhok-DD} through this splitting. The functional $\K^h\FP(\rho+\rho\ND|\rho^{k-1})$ characterises a single time-step of diffusion of $\rho^{k-1}$, according to Theorem~\ref{theo: JKO}. Decay is left out of this step, since the joint mass $\rho+\rho\ND$ is independent of the distribution over normal ($\rho$) and decayed matter ($\rho\ND$). In a second step, given a choice for $\rho+\rho\ND$, the second functional $\K^h\Dec(\rho|\rho+\rho\ND)$  describes how the total mass $\rho+\rho\ND$ is divided over $\rho$ and $\rho\ND$, according to Theorem~\ref{theo:simple-x}.  As such, we can interpret $\rho+\rho\ND$ as an intermediate state between $\rho^{k-1}$ and $\rho$.



\subsection{From microscopic model to large deviations}

We claimed above that the approximation scheme arises naturally in the context of stochastic particle systems. We now describe this context. It is well known (going back at least to Einstein~\cite{Einstein1905}) that the diffusion equation 
\begin{equation}
\label{eq: diffusion equation}
	\partial_t u = \lapl u, \qquad \text{in }\R^d \times (0,\infty), \\	
\end{equation}
is the macroscopic (hydrodynamic, continuum) limit of a wide range of stochastic particle systems \cite{Masi1991}. Here we focus on one such system, composed of independent Brownian particles. 

More specifically, let all particles $1,\hdots,n$ be initially distributed according to some fixed $\init\rho \in \P(\R^d)$, and, for a fixed time interval $h>0$, let each particle $i=1,\hdots,n$ move to a new position $Y^h_i$, where the probability of moving from $x$ to $y$ is given by the density (which is identical for all particles)
\begin{equation}
\label{def: diffusion kernel} 
	\theta^h(y-x) := \frac{1}{(4\pi h)^{d/2}} \exp\Bigl{(}-\frac{|x-y|^2}{4h}\Bigr{)}.
\end{equation}

The empirical measure $L_n^h :=n^{-1}\sum_{i=1}^n \delta_{Y^h_i}$  then is a random probability measure that describes the distribution of all $n$ particles in space at time $h$. This measure converges (as $n\to \infty$) to $\init\rho\ast \theta^h$, the solution of \eqref{eq: diffusion equation} at time $h$ with initial condition $\init\rho$.


The speed of this convergence is characterised by a \emph{large-deviation principle}, which we discuss in Section~\ref{sec: large deviations}. It states that the probability of finding $L_n^h$ close to some $\rho \in \P(\R^d)$ converges exponentially to zero with rate $n\J^h\Diff(\rho|\init\rho)$ (the subscript stands for `Diffusion equation'):
\begin{equation*}
	\Prob(L_n^h\approx \rho|L_n^0\approx \init\rho) \sim \exp \bigl{(}-n \J^h\Diff(\rho|\init\rho)\bigr{)} \quad \text{  as } n\to \infty.
\end{equation*}
The \emph{rate functional} $\J^h\Diff(\,\cdot\,|\init\rho)$ is non-negative and minimised by the solution of \eqref{eq: diffusion equation} at time~$h$. 

\subsection{From large deviations to Wasserstein gradient flow}
\label{ssec:ldtowasser}
When restricting ourselves to the diffusion equation \eqref{eq: diffusion equation}, the gradient-flow functional \eqref{def: KFP} reduces to
\begin{equation*}
	\K^h\Diff(\rho|\init\rho) := \tfrac12 \S(\rho) - \tfrac12 \S(\init\rho) + \frac{1}{4h} d^2(\init\rho,\rho).
\end{equation*}
Recent results \cite{Adams2011,Duong2012} have shown that, under suitable assumptions, not only the minimisers of $\J^h\Diff$ and $\K^h\Diff$ have the same limit, but the two are in fact strongly related. Since we expect this statement to be generally true, we pose it here as a conjecture. It will be convenient to introduce the set:
\begin{equation*}
  \P_2^{\S}(\R^d):=\left\{\rho\in\P(\R^d):\int\!|x|^2\,d\rho<\infty, \S(\rho)<\infty\right\}.
\end{equation*}
\begin{conj}
\label{conj} For any fixed $\init\rho\in\P_2^\S(\R^d)$ there holds
	\begin{equation}
	\label{eq: diffusion gamma convergence}
		\J^h\Diff(\,\cdot\,|\init\rho) - \frac{1}{4h}d^2(\init\rho,\,\cdot\,) \xrightarrow[h\to 0]{M}\ \tfrac12 \S(\cdot) - \tfrac12 \S(\init\rho)=\K^h\Diff(\,\cdot\,|\init\rho) - \frac{1}{4h}d^2(\init\rho,\,\cdot\,)
	\end{equation}
	in the sense of Mosco convergence, where the lower bound holds in $\P_2(\R^d)$ with the narrow topology, and the recovery sequence holds in the topology defined by convergence in Wasserstein distance plus convergence in entropy $\S$	(see Section~\ref{sec:Mosco convergence}).
\end{conj}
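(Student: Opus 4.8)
The plan for establishing Conjecture~\ref{conj} is to recast $\J^h\Diff$ as a small-noise entropic transport cost and then verify the two Mosco inequalities separately. Write $H(\gamma|\mu)=\int\log\tfrac{d\gamma}{d\mu}\,d\gamma$ for relative entropy, $\mathcal L^{2d}$ for Lebesgue measure on $\R^d\times\R^d$, and let $\Pi(\init\rho,\rho)$ be the set of probability measures on $\R^d\times\R^d$ with marginals $\init\rho$ and $\rho$. By the large-deviation representation recalled in the Appendix,
\begin{equation*}
  \J^h\Diff(\rho|\init\rho)=\inf_{\gamma\in\Pi(\init\rho,\rho)}H\bigl(\gamma\,\big|\,\init\rho(dx)\,\theta^h(y-x)\,dy\bigr).
\end{equation*}
Since $\init\rho\in\P_2^\S(\R^d)$ has a density $\init f$ with $\S(\init\rho)=\int \init f\log \init f<\infty$, inserting the explicit Gaussian form~\eqref{def: diffusion kernel} of $\theta^h$ gives, for $\gamma\in\Pi(\init\rho,\rho)$ with density $g$, the identity $H(\gamma\,|\,\init\rho(dx)\theta^h(y-x)dy)=H(\gamma\,|\,\mathcal L^{2d})-\S(\init\rho)+\tfrac d2\log(4\pi h)+\tfrac1{4h}\int|x-y|^2\,d\gamma$, so that after subtracting $\tfrac1{4h}d^2(\init\rho,\rho)$ and minimising,
\begin{equation*}
  \J^h\Diff(\rho|\init\rho)-\tfrac1{4h}d^2(\init\rho,\rho)=\tfrac d2\log(4\pi h)-\S(\init\rho)+A^h(\init\rho,\rho),\qquad
  A^h(\init\rho,\rho):=\inf_{\gamma\in\Pi(\init\rho,\rho)}\Bigl[H(\gamma\,|\,\mathcal L^{2d})+\tfrac1{4h}\Bigl(\int|x-y|^2\,d\gamma-d^2(\init\rho,\rho)\Bigr)\Bigr].
\end{equation*}
The functional $A^h$ is symmetric in its two arguments and is, up to the additive constant $\tfrac d2\log(4\pi h)$, the Schr\"odinger-bridge cost between $\init\rho$ and $\rho$ with noise intensity of order $h$. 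Conjecture~\ref{conj} is therefore equivalent to showing that, in the Mosco sense of the theorem, $A^h(\init\rho,\cdot)+\tfrac d2\log(4\pi h)\to\tfrac12\S(\init\rho)+\tfrac12\S(\cdot)$: this is precisely the short-time (second-order) expansion of the Schr\"odinger cost, to be carried out along the lines of~\cite{Adams2011,Duong2012}.

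\emph{Recovery (limsup) inequality.} If $\S(\rho)=+\infty$ the limit is $+\infty$ and the constant sequence $\rho^h\equiv\rho$ suffices, so assume $\rho\in\P_2^\S(\R^d)$. Suppose first that $\rho$ and $\init\rho$ lie in a regular class (densities bounded above and below on bounded sets, with suitable decay), so that the Brenier optimal map $T=\grad\varphi$ from $\init\rho$ to $\rho$ and the associated $d^2$-Kantorovich potentials are well behaved. Taking $\rho^h\equiv\rho$, one uses as competitor the coupling obtained from the plan $(\id,T)_\#\init\rho$ by spreading it into a Gaussian of width $\sim\sqrt h$ and reweighting by the Schr\"odinger potentials so as to restore the two marginals exactly; evaluating $H(\cdot\,|\,\mathcal L^{2d})$ (which yields $\S(\init\rho)-\tfrac d2\log(4\pi h)-\tfrac d2$ plus the reweighting correction) and $\int|x-y|^2\,d\gamma$ (which yields $d^2(\init\rho,\rho)+2dh$ plus a lower-order correction), together with a Laplace expansion of the reweighting, gives $A^h(\init\rho,\rho)+\tfrac d2\log(4\pi h)\le\tfrac12\S(\init\rho)+\tfrac12\S(\rho)+o(1)$. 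For a general $\rho\in\P_2^\S(\R^d)$ one approximates $\rho$ by regular $\rho_k$ with $d(\rho_k,\rho)\to0$ \emph{and} $\S(\rho_k)\to\S(\rho)$ --- which is possible, and is exactly why the recovery topology in the statement is the joint Wasserstein-and-entropy one --- and passes to a diagonal sequence $\rho^h:=\rho_{k(h)}$.

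\emph{Liminf inequality, and the main obstacle.} Let $\rho^h\to\rho$ narrowly in $\P_2(\R^d)$; we may assume $\liminf_h[\J^h\Diff(\rho^h|\init\rho)-\tfrac1{4h}d^2(\init\rho,\rho^h)]<\infty$ and, along a subsequence, pick $o(1)$-minimisers $\gamma^h\in\Pi(\init\rho,\rho^h)$ of $A^h(\init\rho,\rho^h)$. The task is the sharp lower bound
\begin{equation*}
  H(\gamma^h\,|\,\mathcal L^{2d})+\tfrac1{4h}\int|x-y|^2\,d\gamma^h\ \ge\ \tfrac1{4h}d^2(\init\rho,\rho^h)+\tfrac12\S(\init\rho)+\tfrac12\S(\rho)-\tfrac d2\log(4\pi h)+o(1).
\end{equation*}
The tool is the Gibbs inequality $H(\gamma\,|\,\mathcal L^{2d})\ge\int\phi\,d\gamma-\int e^{\phi-1}\,d\mathcal L^{2d}$. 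The crude choice $\phi=\tfrac12\log\init f(x)+\tfrac12\log f^h(y)-\tfrac{|x-y|^2}{4h}+c$, with $f^h$ the density of $\rho^h$ and $c$ optimised, already produces $\tfrac12\S(\init\rho)+\tfrac12\S(\rho^h)$ and $-\tfrac d2\log(4\pi h)$, but not the transport term $\tfrac1{4h}d^2(\init\rho,\rho^h)$; to recover it one must add $\tfrac1{4h}(\psi^h(x)+(\psi^h)^c(y))$ to $\phi$, where $(\psi^h,(\psi^h)^c)$ are $d^2$-Kantorovich potentials between $\init\rho$ and $\rho^h$. After the optimal choice of $c$ the residual becomes $-\log K^h$, with $K^h=\int\sqrt{\init f(x)\,f^h(y)}\,e^{(\psi^h(x)+(\psi^h)^c(y))/4h}\,\theta^h(y-x)\,dx\,dy$, and the argument is reduced to proving $K^h\to1$. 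This is a Laplace-type asymptotic expressing the concentration of $e^{(\psi^h(x)+(\psi^h)^c(y)-|x-y|^2)/4h}$ on the graph of the optimal map, and it is here --- establishing it under only the regularity afforded by narrow convergence $\rho^h\to\rho$ in $\P_2(\R^d)$, uniformly enough to survive both $h\to0$ and $\rho^h\to\rho$, and without extra moment or density hypotheses on $\init\rho$, $\rho$, $\rho^h$ --- that I expect the genuine difficulty to lie, and it is why the statement is so far only conjectural in full generality. Granting the bound for the chosen $\gamma^h$, one concludes by lower semicontinuity of $\S$ under narrow convergence (to replace $\liminf\S(\rho^h)$ by $\S(\rho)$) together with a tightness estimate on $\{\gamma^h\}$ ruling out escape of mass to infinity.
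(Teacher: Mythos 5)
There is no proof of this statement in the paper for you to be measured against: it is stated as a \emph{conjecture} precisely because no proof in this generality was available, it is \emph{assumed} as a hypothesis in Theorems~\ref{theo: Fokker-Planck Gamma convergence} and~\ref{theo: diff dec gamma convergence}, and the paper only records that it has been proved in \cite{Adams2011} for densities close to uniform on a bounded interval in $\R$ and in \cite{Duong2012} on $\R$ for $\init\rho$ with bounded Fisher information. So a complete blind proof was not to be expected, and your own assessment --- that the sharp lower bound is where the statement remains open --- coincides with the paper's reason for calling it a conjecture.

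On its own terms your reduction is correct and follows the route of the cited works: since the first marginal of any admissible $\gamma$ is $\init\rho$, the identity $\mathcal{H}(\gamma|\init\rho\,\theta^h)=\mathcal{H}(\gamma|\mathcal{L}^{2d})-\S(\init\rho)+\tfrac d2\log(4\pi h)+\tfrac1{4h}\int|x-y|^2\,d\gamma$ holds, and the conjecture is indeed equivalent to a second-order small-time expansion of the entropic (Schr\"odinger) cost; your dual argument with $\phi=\tfrac12\log\init f+\tfrac12\log f^h+\tfrac1{4h}\bigl(\psi^h+(\psi^h)^c\bigr)+c$ correctly produces the entropies, the $\log h$ term and the transport term, leaving the quantity $K^h$. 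But be aware that even the statement ``$K^h\to1$'' conceals two further issues you should make explicit: for the liminf you need $\limsup_h K^h\leq 1$, and this is not automatic, since the pointwise bound $\psi^h(x)+(\psi^h)^c(y)\leq|x-y|^2$ only gives $K^h\leq(4\pi h)^{-d/2}\int\!\sqrt{\init f(x)f^h(y)}\,dx\,dy$, which may be infinite without additional integrability or tightness input; and replacing $\liminf_h\S(\rho^h)$ by $\S(\rho)$ under mere narrow convergence requires uniform second-moment control to handle the negative part of $\S$, which must itself be extracted from the finiteness of the rate functionals. These are exactly the points at which \cite{Adams2011} and \cite{Duong2012} invoke their extra hypotheses, so your proposal is an accurate map of the problem and of the known strategy, but, as you say yourself, it is not a proof --- which is consistent with the statement's status in the paper.
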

This conjecture was first proven in \cite{Adams2011} under the restriction that both $\rho$ and $\init\rho$ in $\J^h\Diff(\rho|\init\rho)$ are sufficiently close to uniform distributions on a bounded interval in $\R$. In \cite{Duong2012}, the result was generalised to $\R$ for any $\init\rho$ with bounded Fisher information.

Note that the term $-(4h)^{-1} d^2(\init\rho, \,\cdot\,)$ appears on both sides of~\eqref{eq: diffusion gamma convergence}. The role of this term is to compensate the singular behaviour of both $\J^h\Diff$ and $\K^h\Diff$ in the limit $h\to0$. Morally, the conjecture states that
\begin{equation*}
  \text{as }h\to0, \qquad \J^h\Diff(\,\cdot\,|\init\rho) \approx \K^h\Diff(\,\cdot\,|\init\rho).
\end{equation*}
This connection shows how the functional $\K^h\Diff$, which defines the time-discretised gradient flow, can be interpreted physically: as the large-deviation rate functional of the microscopic model.

\subsection{Overview of this work}

In this article we extend the results of~\cite{Adams2011,Duong2012} to equation~\eqref{eq: diffusion drift decay}. Although the results in the latter already includes the Fokker-Planck equation~\eqref{eq: Fokker-Planck}, this paper uses very different techniques and yields results under different assumptions on the potential $\Psi$. The main results of this paper are of the same form as Theorem~\ref{theo: JKO} and Conjecture~\ref{conj}.

We divide the arguments, and the paper, into two parts. In the first part we discuss diffusion with drift but without decay ($\Psi\not\equiv 0$, $\lambda=0$ in~\eqref{eq: diffusion drift decay}). First we construct a system of Brownian particles with drift that models the Fokker-Planck equation \eqref{eq: Fokker-Planck}, and then derive a corresponding large-deviation principle. In our first main result, Theorem~\ref{theo: Fokker-Planck Gamma convergence}, we show that for small times the large-deviation rate functional of the micro model relates to $\K^h\FP$ in the same sense as in the Conjecture~\ref{conj} for the diffusion equation. Note that the expression for the gradient-flow functional $\K^h\FP$ is already known from~\cite{JKO1998}; the novelty of the current result lies in the connection to the microscopic particle system.

The second part of the paper concerns the diffusion equation with decay ($\lambda>0$, and for ease of notation we first take $\Psi\equiv 0$):
\begin{equation}
\label{eq: diffusion decay}
	\partial_t u = \lapl u - \lambda u,		 	\qquad \text{in }\R^d \times (0,\infty).
\end{equation}
Again, we devise a particle system that models this equation microscopically, and derive a corresponding large-deviation principle. In the second main result of this paper, Theorem~\ref{theo: diff dec gamma convergence}, we show that the large-deviation rate functional relates to an energy-dissipation functional~\eqref{def:KFPDec} in the same way as in Conjecture~\ref{conj}. Finally, in Theorem~\ref{theo: diff dec gradient flow} we show that the minimisers of this new functional indeed approximate the solution of \eqref{eq: diffusion decay} in the sense of Theorem~\ref{theo: JKO}. In this case, the novelty lies in both the expression of the energy-dissipation functional, and in its connection to the microscopic system.

\section{Background}
\label{sec: background}

\subsection{Wasserstein distance}
\label{sec: wasserstein distance}
In the Kantorovich formulation of the optimal transport problem, a transport plan between two measures $\init\rho, \rho \in \P(\R^d)$ is a measure in the set
\begin{equation*}
	\Gamma(\init\rho,\rho) := \bigl\{q \in \P(\R^d\times \R^d) : \pi^1 q=\init\rho \text{ and } \pi^2 q=\rho \bigr\},
\end{equation*}
where we denote the the marginals of $q$ by
\begin{align*}
  \pi^1 q(B) := q(B\times \R^d) && \text{ and }&& \pi^2q(B) := q(\R^d\times B) && \text{ for all Borel sets } B\subset \R^d.
\end{align*}
In the particular case of the 2-Wasserstein distance (henceforth simply called the Wasserstein distance), the unit cost of transporting an infinitesimal mass from position $x$ to $y$ is taken to be $|x-y|^2$. One can then ask for the optimal transport plan that transports all mass from a measure $\init\rho$ to another measure $\rho$. The minimum cost defines a metric on the space $\P_2(\R^d):=\{\rho\in\P(\R^d):\int\!|x|^2\,d\rho<\infty\}$ and is called the 
\begin{defn}[Wasserstein distance]
\begin{equation*}
	d^2(\init\rho,\rho) := \inf_{q\in\Gamma(\init\rho,\rho)} \iint |x-y|^2 \, q(dx\,dy).
\end{equation*}
\end{defn}
An important property of the Wasserstein distrance is that a sequence $\{\rho_h\}_h \in \P_2(\R^d)$ converges to $\rho$ in the Wasserstein distance as $h\to0$ if and only if \cite[Th.~7.12]{Villani2003}
\begin{enumerate}
\item $\rho_h \weakto \rho$ (see Section~\ref{sec:Mosco convergence}),
\item $\int\!x^2\,\rho_h(dx)\to\int\!x^2\,\rho(dx)$.
\end{enumerate}

Observe that the Wasserstein distance is still meaningful for measures $\init\rho,\rho\in\M^+(\R^d)$ that are not necessarily probability measures, as long as $|\init\rho|=|\rho|$. With this generalisation we have that
\begin{equation}
\label{ineq:Wasserstein-sum}
d^2(\rho_1+\rho_2,\rho_3+\rho_4) \leq d^2(\rho_1,\rho_3) + d^2(\rho_2,\rho_4) 
\qquad\text{for all }\rho_{1,2,3,4}\text{ with }|\rho_1| = |\rho_3| \text{ and }|\rho_2| = |\rho_4|.
\end{equation}
This property will be used later in the article.

\subsection{Large deviations}
\label{sec: large deviations}
Recall from the law of large numbers that with probability $1$, in the large-$n$ limit the expectation $\mathbb{E} L_1^h$ is the only event that occurs (see for example \cite[Th. 11.4.1]{Dudley1989}). In this limit, any other event is  considered a large deviation from this expected behaviour. A large-deviation principle characterises the unlikeliness of such event by the speed of convergence of its probability to $0$. To illustrate this, we briefly switch to a more abstract notation.

\begin{defn} A sequence~$X_n$ of random variables with variables in a topological space $\mathcal X$ satisfies the \emph{large-deviation principle} with speed $n$ and \emph{rate functional} $\J:\mathcal X\to [0,\infty]$ whenever:
\begin{enumerate}
	\item $\J$ is not identically $\infty$, and  $\J^{-1}\!\left[0,c\right]$ is compact for all $c < \infty$; 
	\item $\liminf_{n \to \infty} \tfrac{1}{n} \log \Prob(X_n\in U)\geq -\inf_{x \in U} \J(x)$ for all open sets $U \subset \mathcal X$; 
	\item $\limsup_{n \to \infty} \tfrac{1}{n} \log \Prob(X_n\in C) \leq -\inf_{\rho \in C} \J(x)$ for all closed sets $C \subset \mathcal X$.
\end{enumerate}
\end{defn}

The rate functional $\J$ is non-negative and achieves its minimum of zero at the most probable behaviour of $X_n$. The right-hand infimum reflects the general principle that ``any large deviation is done in the least unlikely of all the unlikely ways'' \cite[p.~10]{Hollander2000}. A related mathematical result is the \emph{contraction principle} \cite[Th.~4.2.1]{Dembo1998}, which states the following. Let $p:\mathcal{X}\to \mathcal{Y}$ be a continuous map, and $Y_n := p(X_n)$ the corresponding random variables. Then $Y_n$ satisfies a large-deviation principle similar to the one above, with rate functional $\inf_{x\in \mathcal X: p(x) = y} \J(x)$. This contraction principle will be used throughout this paper. For instance, it explains the role of the minimisation in~\eqref{min:intro-rhok-DD}.

\subsection{Mosco convergence}
\label{sec:Mosco convergence}

A useful tool in the study of sequences of minimisation problems is $\Gamma$-convergence~\cite{DalMaso93}. In particular, it is often used in the study of large deviations \cite[Lem.~2]{Adams2011} and gradient flows (cf. \cite{DeGiorgi2006,Sandier2004}). Moreover, in \cite{Leonard2007}, Gamma-convergence is used to connect large deviations to optimal transport. In many cases, it is convenient to require that the recovery sequence of the $\Gamma$-convergence exists in a stronger topology (cf. \cite[Rem.~2.0.5]{Ambrosio2008} or \cite{Mielke2012}): the resulting notion of convergence is known as Mosco-convergence~\cite{Mosco1969}. In results that are related to this paper, a further analysis reveals that Mosco-convergence is indeed satisfied (cf. \cite[Th.~3]{Adams2011},\cite[Th.~1.1]{Duong2012}). In this sense it provides a natural notion for the purpose of this study.

\begin{defn}Let $\mathcal{X}$ be a space with two first-countable (e.g. metrisable) topologies $\tau_w\subset \tau_s$. A sequence of functionals $\{\F^h\}_h$ on $\mathcal{X}$ Mosco-converges\footnote{We slightly generalise the usual concept of Mosco convergence, where $\mathcal{X}$ should be a Banach space where the weak topology is defined by duality with $\mathcal{X}^*$.} to $\F:\mathcal{X}\to\R\cup\{\infty\}$ as $h\to0$, written as $\F^h\xrightarrow{M}\F$, whenever
	\begin{enumerate}
		\item (\emph{Lower bound}) For any sequence $\rho^h \xrightarrow[h\to0]{\tau_w} \rho$ in $\mathcal{X}$ there holds 
			\begin{equation*}
				\liminf_{h\to0} \F^h(\rho^h) \geq \F(\rho);
			\end{equation*}
		\item (\emph{Recovery sequence}) For all $\rho \in \mathcal{X}$ there is a sequence $\rho^h \xrightarrow[h\to0]{\tau_s} \rho$ in $\mathcal{X}$ such that
			\begin{equation*}
				\limsup_{h\to 0} \F^h(\rho^h) \leq \F(\rho).
			\end{equation*}
	\end{enumerate}
\end{defn}
In this paper we take $\mathcal{X}=\P^\S_2(\R^d)$  (defined in Subsection~\ref{ssec:ldtowasser}), and for $\tau_w$ we take the narrow topology, characterised by narrow convergence:
\begin{align*}
  \rho^h\weakto\rho && \text{if and only if }&& \int\!\phi(x)\,\rho^h(dx)\to\int\!\phi(x)\,\rho(dx) \text{ for all } \phi\in C_b(\R^d).
\end{align*}
For the strong topology $\tau_s$, we take the weakest topology such that all functionals $\rho\mapsto\int\!x^2\,\rho(dx)$, $\rho\mapsto\S(\rho)$ and $\rho\mapsto\int\!\phi(x)\,\rho(dx)$ for all $\phi\in C_b(\R^d)$ are continuous. 
Since this topology is first-countable, convergence in $(\P_2^\S,\tau_s)$ is characterised by convergence in the Wasserstein topology plus convergence of the entropy functional $\S$. In fact, we prove below that convergence in this topology implies strong $L^1$-convergence of the sequence and its entropies. These important facts will be used to prove the Mosco-convergence Theorems~\ref{theo: Fokker-Planck Gamma convergence} and \ref{theo: diff dec gamma convergence}. 
Let $\mathcal{L}^d$ be the $d$-dimensional Lebesgue measure.
\begin{lem}
\label{theo: strong entropy convergence}
   Let $\rho^h\to\rho$ in $\P_2^\S(\R^d)$ in the strong topology, i.e.:
  \begin{align}
    &d(\rho^h,\rho)\to 0,\ \text{ in the Wasserstein metric,} \label{eq:hyp1} \\
    &\S(\rho^h)\to \S(\rho).\label{eq:conv}
  \end{align}
    Then $\rho^h$ and $\rho$ are $\mathcal{L}^d$-absolutely continuous and can be identified with their densities, i.e.	$	\rho^h, \rho \in L^1(\R^d),$ and there is a subsequence such that 
	\begin{align}
		\rho^h&\to\rho, \label{stelling:a}\\
		\rho^h\log\rho^h &\to\rho\log\rho\label{stelling:b}
	\end{align}	
	strongly in $L^1(\R^d)$.
\end{lem}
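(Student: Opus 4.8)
The plan is to deduce absolute continuity and the $L^1$-convergences from the two hypotheses by exploiting the interplay between the finiteness of the entropy and the tightness furnished by Wasserstein convergence. First I would record the elementary facts: Wasserstein convergence \eqref{eq:hyp1} implies narrow convergence $\rho^h\weakto\rho$ together with convergence of second moments, and in particular the family $\{\rho^h\}_h\cup\{\rho\}$ is tight and has uniformly bounded second moments. Since $\S(\rho)<\infty$ by the definition of $\P_2^\S(\R^d)$, the limit $\rho$ is $\mathcal L^d$-absolutely continuous with density in $L^1(\R^d)$; the same holds for each $\rho^h$ with $h$ small, because \eqref{eq:conv} forces $\S(\rho^h)$ to be finite (hence each $\rho^h$ has a density $f^h$). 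From now on I identify the measures with their densities $f^h$, $f$.

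Next I would upgrade narrow convergence to weak $L^1$ convergence. The key is equi-integrability of $\{f^h\}$: one controls the mass where $f^h$ is large using the entropy bound (via the superlinear growth of $t\mapsto t\log t$, i.e. a de la Vallée-Poussin type argument), and the mass "at infinity" using the uniform second-moment bound (this handles the fact that $t\log t$ is not bounded below and can leak mass to $\infty$; the standard device is to split $\int f^h\log f^h = \int (f^h\log f^h + |x|^2 f^h) - \int |x|^2 f^h$ so that the integrand $f^h\log f^h+|x|^2f^h$ is bounded below and superlinear). With equi-integrability plus tightness, the Dunford–Pettis theorem gives a subsequence converging weakly in $L^1(\R^d)$, and the limit must be $f$ by uniqueness of the narrow limit. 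This proves a weak-$L^1$ version of \eqref{stelling:a}.

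To get \emph{strong} $L^1$ convergence I would use the convergence of entropies \eqref{eq:conv} as a "no loss of mass/energy" condition. The functional $\rho\mapsto\S(\rho)$ (or better, the modified functional $\rho\mapsto\int(f\log f+|x|^2f)\,dx$, which is narrowly lower semicontinuous and bounded below) is strictly convex, so along the weakly-$L^1$-convergent subsequence, convergence of the functional values together with weak convergence and lower semicontinuity upgrades to strong $L^1$ convergence — this is the classical "weak convergence $+$ convergence of a strictly convex integral functional $\Rightarrow$ strong convergence" principle (a Visintin/Brezis-type argument, or equivalently an application of the strict convexity of $t\log t$ combined with a Young-measure or a direct convexity inequality). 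Since $|x|^2 f^h \to |x|^2 f$ in the sense that their integrals converge (from the second-moment convergence) and $|x|^2 f^h \weakto |x|^2 f$, this transfers to convergence of $\int f^h\log f^h$, and then strong $L^1$ convergence of $f^h$ to $f$ follows, which is \eqref{stelling:a}. Finally \eqref{stelling:b}, the $L^1$ convergence of $f^h\log f^h$, follows from \eqref{stelling:a} by extracting a further subsequence converging a.e., applying the generalized dominated convergence / Vitali theorem — equi-integrability of $f^h\log f^h$ is exactly what the (modified) entropy-convergence provides, since convergence of the integrals of a nonnegative-after-modification superlinear integrand together with a.e. convergence yields equi-integrability and hence $L^1$ convergence.

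The main obstacle I anticipate is the lack of a lower bound on $t\log t$ and the attendant possibility of mass escaping to infinity: one cannot argue with $\S$ directly because it is not bounded below and not narrowly lower semicontinuous on $\P_2$ without the moment control. The clean fix is to carry the weight $|x|^2$ alongside throughout, i.e. to work with $\widetilde\S(\rho):=\int (f\log f + |x|^2 f)\,dx$, which \emph{is} bounded below and narrowly lower semicontinuous, and for which \eqref{eq:hyp1}--\eqref{eq:conv} imply $\widetilde\S(\rho^h)\to\widetilde\S(\rho)$; all the equi-integrability, Dunford–Pettis, and strict-convexity arguments then go through for $\widetilde\S$ and can be translated back to $\S$ at the end. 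A secondary technical point is that everything is only claimed up to a subsequence, which is harmless here since the hypotheses already fix the limit.
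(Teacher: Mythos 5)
Your argument is correct, but it takes a genuinely different route from the paper's proof. You assemble the result from standard machinery: de la Vall\'ee-Poussin/Dunford--Pettis (entropy bound plus uniform second moments) for weak $L^1$ compactness, a Visintin-type strict-convexity principle applied to the moment-corrected functional $\int(f\log f+|x|^2 f)\,dx$ to upgrade weak $L^1$ convergence plus convergence of the functional values to strong $L^1$ convergence, and a Scheff\'e/Vitali argument for \eqref{stelling:b}. The paper instead proves the same mechanism by hand: it writes $\S(\rho)=\mathcal H(\rho|\nu)+\log c-\alpha\int\rho\log(1+|x|)$ for the reference measure $\nu(x)=c(1+|x|)^{-\alpha}$, $\alpha>d$ (its analogue of your $|x|^2$-correction, which likewise tames the negative part of the entropy), encodes the densities as plans $\gamma^h\in\P(\R^d\times\R)$ with first marginal $\nu$ --- in effect Young measures --- passes to a narrow limit, identifies the barycentric projection of the limit with $\rho/\nu$, and uses Jensen's inequality plus strict convexity of $G(s)=s(\log s-1)+1$ together with \eqref{eq:conv} to force the disintegration to be a Dirac, yielding a.e.\ convergence; both \eqref{stelling:a} and \eqref{stelling:b} then follow from the Brezis theorem (a.e.\ convergence plus convergence of integrals implies strong $L^1$). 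So the two proofs share the same core (strict convexity of the entropy integrand, convergence of entropies and of second moments); your route buys brevity, provided you either quote a version of Visintin's theorem valid for $x$-dependent strictly convex integrands bounded below by an integrable function, or carry out the Young-measure argument you sketch (which is essentially what the paper does explicitly), while the paper's route is longer but self-contained and produces pointwise a.e.\ convergence as an intermediate product that it reuses for \eqref{stelling:b}; you recover a.e.\ convergence afterwards by a further subsequence, which the statement permits.
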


\begin{proof}[Proof of Lemma \ref{theo: strong entropy convergence}.]
%

\textbf{Step I - Decomposition of the entropy.} To deal with the fact that $\S$ is not bounded from below, we rewrite $\S$ in the following way. Define, for any $\alpha\in \R$ with $\alpha >d$
\begin{align*}
  c^{-1}:=\int_{\R^d}\frac{1}{(1+|x|)^\alpha}dx,  && \nu(dx)=\nu(x)\,dx=\frac{c}{(1+|x|)^\alpha}\,dx,
\end{align*}
and let $\mathcal{H}$ be the relative entropy on two probability measures $\gamma,\nu\in \P(\R^d)$:
\begin{equation}
\label{eq:relative entropy}
	\mathcal{H}(\gamma|\nu) :=	\begin{cases}
  \ds\int \frac{d\gamma}{d\nu}(x)\log\frac{d\gamma}{d\nu}(x)\; \nu(dx),	&\text{if } \gamma \ll \nu, \\
																				\;+\infty,																												&\text{otherwise}.
													\end{cases}
\end{equation}
(Note that $\S(\rho)=\mathcal{H}(\rho|\mathcal{L}^d)$.) Then for any $\rho \in \mathcal{P}_2^\S$, we can write
\begin{align}
	\S(\rho)=\int_{\R^d} \rho \log \rho\, dx &=	\int_{\R^d} \frac{\rho}{\nu} \log\left(\frac{\rho}{\nu}\right) \nu\, dx + \int_{\R^d} \rho \log(\nu)\, dx \nonumber\\
			&=	\mathcal H(\rho|\nu) + \log c -\alpha \int_{\R^d} \rho \log(1+|x|)\, dx. \label{eq:firstlim}
\end{align}	
By \eqref{eq:hyp1} and \cite[Lem.~5.1.7]{Ambrosio2008} 
\begin{equation}
\label{eq:wwwconv}
	\int_{\R^d}\rho^h(x)\phi(x)\, dx \to \int_{\R^d}\rho(x)\phi(x)\, dx 
\end{equation}
for all continuous functions $\phi:\R^d \to\R$ such that $|\phi(x)|\leq A +B|x|^2$ for all $x\in \R^d$, for some $A,B\geq 0$. This implies that the last term on the right of \eqref{eq:firstlim} converges:
\begin{equation}
\label{eq:firstconv}
	\alpha \int_{\R^d} \rho^h(x) \log(1+|x|)\, dx \to \alpha \int_{\R^d} \rho(x) \log(1+|x|)\, dx,
\end{equation}
so that the study of $\S(\rho^h)$ can be reduced to the study of $\mathcal{H}(\rho^h|\nu)$. 

\textbf{Step II - convergence of the plans.} Define the measures $\gamma^h\in \P(\R^d \times \R)$ by
\begin{equation*}
	\int_{\R^d \times \R}\psi(x,y)\, \gamma^h(dx\,dy) = \int_{\R^d}  \psi\left(x,\frac{\rho^h(x)}{\nu(x)}\right) \nu(x)\,dx\qquad \text{for all }\psi \in C_b(\R^d \times \R).
\end{equation*}
The marginals $\pi^1\gamma^h$ and $\pi^2\gamma^h$ then satisfy
\begin{align}
	\int_{\R^d }\phi(x)\, \pi^1\gamma^h(dx) &= \int_{\R^d}\phi(x)\, \nu(x)\,dx , \label{eq:first marginal}\\
		\int_{\R }\varphi(y)\, \pi^2\gamma^h(dy) &= \int_{\R^d}\varphi\left(\frac{\rho^h(x)}{\nu(x)}\right) \nu(x)\,dx\notag,
\end{align}
for all $\phi \in C_b(\R^d)$, for all $\varphi \in C_b(\R)$. 
We claim that
\begin{itemize}
		\item there exists $\gamma\in \P(\R^d \times \R)$ such that, up to subsequences, $\gamma^h\weakto \gamma$ (narrowly);
		\item the barycentric projection \eqref{eq:bary} of the limit $\gamma$, with respect to $\nu$, is $\rho/\nu$.
\end{itemize}
In order to prove the first part of the claim, we note that by \cite[Lem.~5.2.2]{Ambrosio2008}, if the marginals of $\gamma^h$ are tight, then  $\gamma^h$ is also tight, and thus (by \cite[Th.~5.1.3]{Ambrosio2008}) relatively compact, with respect to the narrow topology of $\P(\R^d \times \R)$. 
By \eqref{eq:first marginal} the first marginal does not depend on $h$. For the second marginal we use the following integral condition for tightness (\cite[Rem.~5.1.5]{Ambrosio2008}): \itshape ``if there exists a function $G:\R\to[0,+\infty]$, whose sublevels are compact in $\R$, such that
$$ \sup_{h\in \mathbb N}\int_\R G(y)\, \pi^2 \gamma^h(dy)<+\infty,$$ 
then $\{\pi^2 \gamma^h\}$ is tight." \upshape We can choose, as in \cite[Eq.~(9.4.2)]{Ambrosio2008}, the nonnegative, lower semicontinuous, strictly convex function
\begin{equation*}
	G(s):=\left\{
		\begin{array}{ll}
			s(\log s -1)+1 &\text{if }s>0,\\
			1		&\text{if }s=0,\\
			+\infty &\text{if }s<0,
		\end{array}
		\right.
\end{equation*}
defined on $\R$, and observe that 
$$ \int_\R\! G(y)\, \pi^2 \gamma^h(dy)	=\int_{\R^d}\!G\left( \frac{\rho^h(x)}{\nu(x)}\right)\, \nu(x)\, dx=\mathcal{H}(\rho^h|\nu). $$
The last term is bounded, owing to \eqref{eq:wwwconv}, \eqref{eq:firstlim}, and \eqref{eq:firstconv}. We conclude that $\gamma^h$ is relatively compact and therefore, up to subsequences, $\gamma^h$ converges to a measure $\gamma\in \P(\R^d\times \R)$. 

In order to prove the second part of the claim, note that by disintegration of measures \cite[Th.~5.3.1]{Ambrosio2008}, there exists a family $\{\mu_x\}_{x\in \R^d}\subset \P(\R)$ such that
\begin{equation}
\label{eq:disintegrate}
	\int_{\R^d\times \R}\psi(x,y)\,\gamma(dx\,dy)=\int_{\R^d} \left( \int_\R \psi(x,y)\mu_x(dy)\right)\, \nu(dx)
\end{equation}
for every Borel map $\psi:\R^d\times \R\to [0,+\infty]$. We want to identify the barycentric projection of $\gamma$ with respect to $\nu$, that is, the function
\begin{equation}
\label{eq:bary}
	x\mapsto \int_\R y\, \mu_x(dy),
\end{equation}
with $\rho/\nu$. This can be done if we can choose as test function $\psi$ a function of the form $(x,y)\to\phi(x)y$, with $\phi\in C_b(\R^d)$. Since such a function is not bounded, we first need to check that it is uniformly integrable. Since $\mathcal{H}(\rho^h|\nu)$ is bounded, there is a constant $C_1>0$ such that, for all $R>1$,
\begin{align*}	
	C_1&>\sup_h \int_{\R^d}\!G\left( \frac{\rho^h(x)}{\nu(x)}\right)\, \nu(x)\, dx\\&\geq \sup_h \int_{\{\rho^h > R\}}\!G\left( \frac{\rho^h(x)}{\nu(x)}\right)\, \nu(x)dx\\
	& = \sup_h \int_{\{\rho^h > R\}} \rho^h(x) \log\left(\rho^h(x)\frac{(1+|x|)^\alpha}{c}\right) dx\\	
	&\geq \sup_h \int_{\{\rho^h > R\}} \rho^h(x)\log R-\rho^h\log c +\alpha\rho^h(x)\log(1+|x|) dx\\
	&\geq \log (R)\, \sup_h \int_{\{\rho^h > R\}}\rho^h(x)\, dx -\log c -\sup_h \alpha \int_{\R^d}\rho^h(x)\log(1+|x|) dx\\ 
	&\stackrel{\eqref{eq:firstconv}}{\geq} \log(R) \sup_h \int_{\{\rho^h > R\}}\rho^h(x)\, dx -C_2.	
\end{align*}
Therefore, 
\begin{equation}
\label{eq:rhohui}
	\lim_{R\to \infty} \sup_h \int_{\{\rho^h > R\}} \rho^h dx \leq \lim_{R\to \infty} \frac{C_1+C_2}{\log(R)}=0,
\end{equation}
i.e., $\rho^h$ is uniformly integrable. Since for every $\phi\in C_b(\R^d)$
\begin{align*} 
	\lim_{R\to \infty} \sup_h \int_{\{\phi(x)y\geq R\}} \phi(x)y\, \gamma^h(dx\,dy) &\leq \lim_{R\to \infty} \sup_h \|\phi\|_\infty\int_{\{|y|\geq R/\|\phi\|_\infty\}} y\, \gamma^h(dx\,dy)\\
	&= \lim_{R\to \infty} \sup_h \|\phi\|_\infty\int_{\{\rho^h \geq R/\|\phi\|_\infty\}} \rho^h(x)\, dx\stackrel{\eqref{eq:rhohui}}{=}0,
\end{align*} 
we conclude that the function $\R^d\times \R \ni (x,y) \mapsto \phi(x)y \in \R$ is uniformly integrable with respect to the measures $\{\gamma^h\}$. Uniform integrability, owing to \cite[Lem.~5.1.7]{Ambrosio2008}, yields 
\begin{equation*}
	\lim_{h\to \infty} \int_{\R^d\times \R}\phi(x)y\,\gamma^h(dx\,dy)= \int_{\R^d\times \R}\phi(x)y\,\gamma(dx\,dy)\stackrel{\eqref{eq:disintegrate}}{=}\int_{\R^d} \phi(x) \left(\int_\R y\, \mu_x(dy)\right) \nu(dx).
\end{equation*}
On the other hand, by \eqref{eq:conv} we know that 
$$	\lim_{h\to \infty} \int_{\R^d\times \R}\phi(x)y\,\gamma^h(dx\,dy)= \lim_{h\to \infty} \int_{\R^d}\phi(x)\frac{\rho^h(x)}{\nu(x)}\, \nu(dx)=\int_{\R^d}\phi(x)\frac{\rho(x)}{\nu(x)}\, \nu(dx).
$$
We conclude that the weak limit of the densities is equal to the barycentric projection of the limit plans:
\begin{equation}
\label{eq:weakeq}
	\frac{\rho(x)}{\nu(x)}=\int_\R y\, \mu_x(dy)\qquad \text{for a.e. }x\in\R^d.
\end{equation}
\textbf{Step III - pointwise convergence.} We compute 
\begin{align}
	\liminf_{h\to \infty} \mathcal{H}(\rho^h|\nu) &=\liminf_{h\to \infty} \int_{\R^d}\!G\left( \frac{\rho^h(x)}{\nu(x)}\right)\nu(dx) \nonumber\\
	&=\liminf_{h\to \infty} \int_{\R^d \times \R}\!G(y)\,  \gamma^h(dx\,dy) \nonumber\\
		&\geq \int_{\R^d \times \R}\!G(y)\,  \gamma (dx\,dy) \nonumber\\
	&= \int_{\R^d}\left( \int_\R\! G(y)\mu_x(dy)\right)\,\nu(dx)\nonumber\\
	&  \geq \int_{\R^d}\!G\left( \int_\R y\, \mu_x(dy)\right)\,\nu(dx)\label{eq:jensen}\\
	&= \int_{\R^d}\!G\left( \frac{\rho(x)}{\nu(x)}\right) \nu(dx)=\mathcal{H}(\rho|\nu),\label{eq:final}
\end{align}
where, in the last three steps, we used \eqref{eq:disintegrate}, Jensen's inequality, and \eqref{eq:weakeq}. Collecting all the computations we have
\begin{align*}
		\mathcal{H}(\rho|\nu)&\stackrel{\eqref{eq:firstlim}}{=}\S(\rho)-\log c +\alpha \int_{\R^d} \rho(x) \log(1+|x|)\, dx \\
									& \stackrel{\eqref{eq:conv},\eqref{eq:firstconv}}{=}\lim_{h\to \infty} \left\{\S(\rho^h)-\log c +\alpha \int_{\R^d} \rho^h(x) \log(1+|x|)\, dx\right\}\\
									&\stackrel{\eqref{eq:firstlim}}{=} \liminf_{h\to \infty} \mathcal{H}(\rho^h|\nu)\\
									&\stackrel{\eqref{eq:final}}{\geq} \mathcal{H}(\rho|\nu).
\end{align*}		
Therefore, the inequality in \eqref{eq:jensen} must be an equality, which, by strict convexity of $G$, implies that $\mu_x$ is a Dirac delta concentrated in $\frac{\rho(x)}{\nu(x)}$, for a.e. $x\in \R^d$. As a consequence
\begin{equation*}
		\frac{\rho^h(x)}{\nu(x)} \to \frac{\rho(x)}{\nu(x)}\qquad \text{for a.e. }x\in \R^d,
\end{equation*}
and therefore
\begin{equation}
\label{eq:conclusion}
		\rho^h(x) \to \rho(x) \qquad \text{for a.e. }x\in \R^d.
\end{equation}
\textbf{Step IV - strong convergence.} To prove the strong convergence results~\eqref{stelling:a} and \eqref{stelling:b}, recall the following theorem from \cite[Th.~1]{Brezis1983} for any measure $\kappa$ on $\R^d$ and non-negative $\rho^h,\rho\in L^1(\kappa)$:
\begin{equation}
\label{th:brezis}
\text{If } \int\!\rho^h\,d\kappa \to \int\!\rho\,d\kappa \text{ and } \rho^h(x)\to \rho(x)\quad\kappa\text{-a.e.}, \text{ then } \rho^h\to \rho \text{ strongly in } L^1(\kappa).
\end{equation}
Clearly, \eqref{stelling:a} follows from \eqref{eq:conclusion} and \eqref{th:brezis} by taking $\kappa=\mathcal{L}^d$.


In order to prove \eqref{stelling:b},  let $G^h\!:=\!G\left( \rho^h/\nu\right)$, $G^0\!:=\!G\left( \rho/\nu\right)$. Since $G$ is continuous and $\rho^h\to\rho$ almost everywhere, 
\begin{equation}
\label{eq:hpw}
		G^h(x)\to G^0(x)\qquad \text{for a.e. }x\in \R^d.
\end{equation}
Moreover, from the proof of \eqref{stelling:a}, we know that
\begin{equation}
\label{eq:lastpage}
	\int_{\R^d}\!G^h(x)\, \nu(dx)=\mathcal{H}(\rho^h|\nu) \to \mathcal{H}(\rho|\nu)=\int_{\R^d}\!G^0(x)\, \nu(dx). 
\end{equation}
Again by \eqref{th:brezis}, now with $\kappa=\nu$, it follows from \eqref{eq:hpw} and \eqref{eq:lastpage} that $G^h\to G^0$ strongly in $L^1(\nu)$. Therefore, because the density of $\nu$ is uniformly bounded
\begin{equation}
\label{eq:strongh}
	G^h \to G^0\qquad \text{strongly in }L^1(\R^d).
\end{equation}
It now follows from \eqref{stelling:a} and \eqref{eq:strongh} together with
\begin{align*} 
	\rho^h \log \rho^h &= G^h \nu +\rho^h\log(\nu)+\rho^h -\nu\\
			&=G^h f +\rho^h(\log(c)+1) -\alpha\rho^h\log(1+|\cdot|) -\nu
\end{align*}			
that, in order to prove \eqref{stelling:b} we only need to check that
$$ \rho^h\log(1+|\cdot|) \to \rho\log(1+|\cdot|)\qquad \text{strongly in }L^1(\R^d).$$
This  follows from the uniform integrability of the first moments of $\rho^h$ and from the strong $L^1$-convergence of $\rho^h$. Precisely, since $d(\rho^h,\rho)\to 0$, then $\rho^h$ has uniformly integrable $p$-moments for all $p\in(0,2)$. In particular, for every $\varepsilon>0$ there exists $R_\varepsilon>0$ such that
$$ \sup_h \int_{|x|\geq R_\varepsilon} |x|\rho^h(x)\, dx \leq \varepsilon.$$
For all $\varepsilon>0$ we estimate
\begin{align*}
		\int_{\R^d} \Big|\rho^h(x) \log(1+|x|) - \rho(x) \log(1+|x|)\Big| dx &\leq \int_{|x|<R_\varepsilon} \big|\rho^h(x) -\rho(x)\big| \log(1+|x|)\,dx \\
		&\hspace{-2cm} + \int_{|x|\geq R_\varepsilon} |x|\rho^h(x)\,dx +\int_{|x|\geq R_\varepsilon} |x|\rho(x)\,dx\\
		&\hspace{-2cm} \leq \|\rho^h-\rho\|_{L^1}\log(1+R_\varepsilon) + 2\varepsilon 
\end{align*}
and therefore, for all $\varepsilon>0$ 
$$\lim_{h\to \infty}		\int_{\R^d} \Big|\rho^h \log(1+|x|)dx - \rho \log(1+|x|)\Big| dx\leq 2\varepsilon.$$
By the arbitrariness of $\varepsilon$, we conclude strong $L^1$-convergence.
\end{proof}

\section{Diffusion with drift}
\label{sec: FP}
In this section we discuss the case of diffusion with drift but without decay ($\Psi\not\equiv0$, $\lambda=0$), i.e.\ equation~\eqref{eq: Fokker-Planck}. First we describe the particle system that we use as a microscopic model for this equation, and derive the corresponding large-deviation principle. Next, we show that the large-deviation rate functional relates to the energy-dissipation functional~\eqref{def: KFP} in a Mosco-convergence sense.

\subsection{Microscopic model}
Consider  a system of $n$ independent (i.e. non-interacting) point particles in $\R^d$. We wish $\init\rho \in \P(\R^d)$ to represent the distribution of initial positions, and implement this as in~\cite{Leonard2007}.  For each $n$ choose $x_i\in \R^d, 1\leq i \leq n$ such that 
\begin{equation*}
	\frac{1}{n} \sum_{i=1}^n \delta_{x_i} \longweakto \init\rho \quad \text{ as } n\to \infty.
\end{equation*}
We then set the (deterministic) initial position\footnote{This way of enforcing the initial distribution~$\init\rho$ is different from the approach  of~\cite{Adams2011}. It provides a more direct result, and is easier to interpret; see Remark~\ref{rem: quenched LDP} for a discussion.} of particle $i\in\{1,\dots,n\}$ to be $x_i$.

The dynamics of the system is determined by the probability for particle $i$ to move from $x_i$ to a (random) position $Y^h_i$ in some fixed time $h>0$. We take this transition probability to be the fundamental solution $\eta^t(y;x)$ of the drift-diffusion equation~\eqref{eq: Fokker-Planck}, in the following sense:
\begin{defn}\label{def: fundamental solution} We say that a mapping $\eta: \R^d \times [0,\infty) \to \P(\R^d)$ is a \emph{fundamental solution} of the Fokker-Planck equation \eqref{eq: Fokker-Planck} whenever
	\begin{enumerate}
 		\item $\eta^{x,t}(B)$ is measurable in $x \in \R^d$ and $t \in [0,\infty)$ for all fixed Borel sets $B\subset\R^d$,
 		\item for all $\phi \in C_b^{2,1} (\R^d\times[0,\infty))$ and $(x,T) \in \R^d\times[0,\infty)$ there holds:
			\begin{equation*}
				\int_0^T \! \int \! \left( \partial_t \phi + \lapl \phi - \grad\Psi\cdot\grad \phi \right) \, \eta^{x,t}(dy)\, dt = \int\! \phi(y,T)\, \eta^{x,T}(dy) - \phi(x,0).
			\end{equation*}
	\end{enumerate}
\end{defn}
\noindent If we assume that $\Psi \in C_b^2(\R^d)$, that is $\Psi\in C^2(\R^d)$ and $|\Psi|, |\grad\Psi|$, and $|\lapl\Psi|$ are all bounded, then there exists an absolutely continuous fundamental solution with a density in $C^{2,1}(\R^d\times(0,\infty))$ \cite[Th.~1.10]{Friedman1964}. We can thus identify this fundamental solution $\eta^{x,t}$ with its density $\eta^t(\,\cdot\,;x)$. 

Using this fundamental solution as the transition probability, the empirical measure $L_n^h=n^{-1}\sum_{i=1}^n \delta_{Y^h_i}$ will converge almost surely to $\init\rho\ast\eta^h$, which is the solution to \eqref{eq: Fokker-Planck} at time $h$ with initial condition $\init\rho$ \cite[Th. 11.4.1]{Dudley1989}. In this sense the proposed system is indeed a  microscopic precursor of this equation.

\subsection{From large deviations to Wasserstein gradient flow}
The sequence $L_n^h$ satisfies a large-deviation principle with rate $n$ and rate functional (see Corollary~\ref{coro: contracted quenched LDP} in the Appendix):
\begin{equation}
\label{def: Fokker-Planck rate functional}
	\J^h\FP(\rho|\init\rho):= \inf_{q \in \Gamma(\init\rho,\rho)} \mathcal{H}\!\left( q | \init\rho \,\eta^h \right)\!,
\end{equation}
where $\mathcal{H}$ is the relative entropy~\eqref{eq:relative entropy} on $\P(\R^d\times \R^d)$, and, by abuse of notation we write $(\init\rho\, \eta^h)(dx\, dy) = \init\rho(x) \eta^h(y;x) \, dx \, dy$.

We now prove the following relationship between this rate functional $\J^h\FP$ and the gradient-flow functional $\K^h\FP$ (given by \eqref{def: KFP}):
\begin{theo}
\label{theo: Fokker-Planck Gamma convergence}	Assume that Conjecture~\ref{conj} holds, and that $\Psi \in C_b^2(\R^d)$. Then for any $\init\rho\in \P_2^\S(\R^d)$
	\begin{equation}
		\begin{split}
			\J^h\FP(\,\cdot\,|\init\rho) - \frac{1}{4h}d^2(\init\rho, \,\cdot\,)  \xrightarrow[h\to 0]{M}\; & \tfrac12 \S(\cdot) - \tfrac12 \S(\init\rho) + \tfrac12 \E(\cdot) - \tfrac12 \E(\init\rho),\\
			& = \K^h\FP(\,\cdot\,|\init\rho) - \frac{1}{4h}d^2(\init\rho, \,\cdot\,).
		\end{split}
	\end{equation}
\end{theo}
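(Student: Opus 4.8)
The plan is to reduce Theorem~\ref{theo: Fokker-Planck Gamma convergence} to Conjecture~\ref{conj} by exploiting the fact that the drift enters only through the fundamental solution $\eta^h$, which for small $h$ is a perturbation of the heat kernel $\theta^h$. The key is a change of measure: compare the reference measure $\init\rho\,\eta^h$ appearing in $\J^h\FP$ with the reference measure $\init\rho\,\theta^h$ appearing in $\J^h\Diff$, and show that the Radon--Nikodym derivative contributes, in the limit $h\to 0$, exactly the extra energy terms $\tfrac12\E(\cdot)-\tfrac12\E(\init\rho)$.

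\textbf{Key steps.} First I would record the exact relation between the two rate functionals. Since $\mathcal H(q|\init\rho\,\eta^h) = \mathcal H(q|\init\rho\,\theta^h) - \int \log\frac{d(\init\rho\,\eta^h)}{d(\init\rho\,\theta^h)}\,dq = \mathcal H(q|\init\rho\,\theta^h) - \iint \log\frac{\eta^h(y;x)}{\theta^h(y-x)}\,q(dx\,dy)$, taking the infimum over $q\in\Gamma(\init\rho,\rho)$ gives
\begin{equation*}
\J^h\FP(\rho|\init\rho) = \inf_{q\in\Gamma(\init\rho,\rho)}\Bigl[\mathcal H(q|\init\rho\,\theta^h) - \iint \log\tfrac{\eta^h(y;x)}{\theta^h(y-x)}\,q(dx\,dy)\Bigr].
\end{equation*}
The second step is the analytic heart: a short-time expansion of $\log(\eta^h/\theta^h)$. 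Using the Gaussian-type bounds and the $C^{2,1}$ regularity of $\eta^h$ from \cite{Friedman1964}, one expects $\log\frac{\eta^h(y;x)}{\theta^h(y-x)} = -\tfrac12\bigl(\Psi(y)-\Psi(x)\bigr) + o(1)$ in a suitable averaged sense as $h\to 0$, uniformly enough that the correction is negligible against the $d^2/4h$ scale; morally this is the statement that the drift term in the Fokker--Planck equation generates the Girsanov weight $\exp(-\tfrac12\int\grad\Psi\cdot dX - \dots)$, whose leading contribution along a near-optimal plan is $-\tfrac12(\Psi(y)-\Psi(x))$. Feeding this in, $\J^h\FP(\rho|\init\rho) = \J^h\Diff(\rho|\init\rho) + \tfrac12\E(\rho) - \tfrac12\E(\init\rho) + o(1)$, where the $\E$-terms come out of the $x$- and $y$-marginal constraints on $q$ and are therefore $q$-independent and survive the infimum cleanly. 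The third step is to combine this with Conjecture~\ref{conj}: subtracting $\frac1{4h}d^2(\init\rho,\cdot)$ from both sides, the Mosco lower bound and recovery sequence for $\J^h\Diff - \frac1{4h}d^2$ transfer to $\J^h\FP - \frac1{4h}d^2$ because the added term $\tfrac12\E(\cdot)-\tfrac12\E(\init\rho)$ is continuous in the narrow topology (as $\Psi\in C_b^2$, hence $\E$ is narrowly continuous and bounded), so it passes to the limit in both the $\liminf$ inequality and along any recovery sequence without disturbing the $\tau_s$-convergence. Finally I would identify the limit with $\K^h\FP - \frac1{4h}d^2$ by plugging the definition \eqref{def: KFP} and noting $\F = \S+\E$.

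\textbf{Main obstacle.} The delicate point is Step~2: making the expansion $\log(\eta^h/\theta^h)\approx -\tfrac12(\Psi(y)-\Psi(x))$ rigorous in a form strong enough to commute with the infimum over transport plans and with the limit $h\to 0$. One must control the error uniformly in the region where the near-optimal plans $q^h$ concentrate — which, since $d^2(\init\rho,\rho^h)$ may blow up like $h$ along recovery sequences, is precisely the regime $|x-y|\sim\sqrt h$ — and one must ensure the error term, integrated against $q^h$, is $o(1)$ and not merely $o(1/h)$. This requires the parabolic Gaussian estimates on $\eta^h$ and its derivatives from \cite{Friedman1964}, together with a tightness argument (uniform integrability of second moments under $q^h$, which follows from the $d^2/4h$ bound) to handle the unboundedness of the quadratic region; the boundedness of $\Psi,\grad\Psi,\lapl\Psi$ is what keeps all the remainder terms under control. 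A secondary, more routine difficulty is verifying that the recovery sequence from Conjecture~\ref{conj} — which converges in $\tau_s$, i.e.\ in Wasserstein distance plus entropy — also gives convergence of $\E(\rho^h)\to\E(\rho)$; but this is immediate from narrow convergence (implied by Wasserstein convergence) and $\Psi\in C_b(\R^d)$.
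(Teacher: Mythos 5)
Your overall architecture is the same as the paper's: rewrite $\mathcal H(q|\init\rho\,\eta^h)$ in terms of $\mathcal H(q|\init\rho\,\theta^h)$ plus a correction involving $\log(\eta^h/\theta^h)$, observe that the marginal constraints on $q\in\Gamma(\init\rho,\rho)$ turn the correction into the plan-independent quantity $\tfrac12\E(\rho)-\tfrac12\E(\init\rho)$, and then invoke Conjecture~\ref{conj} together with the narrow continuity of $\E$ (which, as you say, also handles the recovery sequence). That part is fine. But the step you yourself flag as the ``analytic heart'' --- the comparison of $\eta^h$ with $\theta^h e^{-\frac12\Psi(y)+\frac12\Psi(x)}$ --- is exactly the content of the paper's key lemma, and you have not supplied it: you only assert that one ``expects'' $\log\frac{\eta^h(y;x)}{\theta^h(y-x)}=-\tfrac12(\Psi(y)-\Psi(x))+o(1)$ in an averaged sense and describe the difficulties (control along near-optimal plans concentrating at $|x-y|\sim\sqrt h$, uniform integrability, $o(1)$ versus $o(1/h)$). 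That is a genuine gap, since without a proven estimate the $\liminf$ and $\limsup$ bounds cannot be closed. Moreover, the paper's footnote notes that off-the-shelf Gaussian bounds (e.g.\ Aronson-type) are not strong enough here, so appealing to ``parabolic Gaussian estimates on $\eta^h$ and its derivatives from \cite{Friedman1964}'' does not by itself deliver the required statement.

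The paper resolves this differently, and more cheaply than your plan: it proves the \emph{pointwise two-sided} bound
\begin{equation*}
\theta^t(y-x)\,e^{-\frac12\Psi(y)+\frac12\Psi(x)+\beta_0 t}\;\le\;\eta^t(y;x)\;\le\;\theta^t(y-x)\,e^{-\frac12\Psi(y)+\frac12\Psi(x)+\beta_1 t}
\end{equation*}
by a parabolic comparison argument: one sets $\zeta_\beta:=\eta^t-\theta^t e^{-\frac12\Psi+\beta t}$, computes that the source term $f_\beta$ has a sign for suitable $\beta_0,\beta_1$ (using only boundedness of $\grad\Psi$ and $\lapl\Psi$), and tests against solutions of the adjoint problem with (approximated) Heaviside end data to conclude $\zeta_{\beta_1}\le 0\le\zeta_{\beta_0}$. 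Because the error in this estimate is the additive constant $\beta_i h$ in the exponent, it integrates against \emph{any} plan $q$ to exactly $\beta_i h\to 0$; no uniform-integrability or concentration analysis along near-optimal plans is needed, and the one-sided inequalities feed directly into the $\liminf$ (with $\beta_1$) and the $\limsup$ (with $\beta_0$) separately, rather than through an averaged asymptotic equality. To complete your proof you would either have to reproduce such a comparison-principle lemma or actually carry out the averaged Girsanov-type expansion you sketch, which is precisely the hard and unexecuted part of your proposal.
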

\noindent The proof relies heavily on an estimate of the fundamental solution $\eta^h$. To explain this estimate morally, observe that if $\Psi$ is affine, i.e.\ $\Psi(x) = c\cdot x$, then the force field $\grad \Psi$ is homogeneous, leading to constant drift $c$. In this simple case, the fundamental solution can be written explicitly:
\begin{equation}
\label{eq: easy fundamental solution}
	\eta^t(y;x) = \frac1{(4\pi t)^{d/2}}\; e^{-|y-(x-ct)|^2/4t} 
	= \theta^t(y-x) e^{-\tfrac12 c\cdot y + \tfrac12 c\cdot x - \tfrac14 |c|^2 t},
\end{equation}
where $\theta^t$ is again the diffusion kernel~\eqref{def: diffusion kernel}. Although for an arbitrary $\Psi$ an analytic expression for the fundamental solution is generally difficult to find, the expression \eqref{eq: easy fundamental solution} above suggests that it can be estimated by something similar for small times. Below we see that this is indeed the case. We expect that this estimate is not a new result, but since we haven't been able to find it in the literature we include the proof here for completeness\footnote{See for example \cite{Aronson1967} for a similar, but not strong enough result.}
\begin{lem}
	Assume $\Psi \in C_b^2(\R^d)$, and let $\eta$ be the fundamental solution from Definition~\ref{def: fundamental solution}. Then there are $\beta_0, \beta_1 \in \R$ such that for every $t>0$:
  \begin{equation}
  \label{eq: diffusion drift fundamental estimate}
		\theta^t(y-x) e^{-\tfrac12 \Psi(y) + \tfrac12 \Psi(x)+ \beta_0 t} \leq \eta^t(y;x) \leq \theta^t(y-x) e^{-\tfrac12 \Psi(y) + \tfrac12 \Psi(x)+ \beta_1 t}
  \end{equation}
   for almost every $x,y \in \R^d$.
\end{lem}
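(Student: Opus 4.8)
The plan is to remove the drift by a ground-state (Doob-type) substitution, reducing $\eta$ to the fundamental solution of a heat equation with a bounded potential, and then to compare that kernel with the free heat kernel $\theta^t$ of \eqref{def: diffusion kernel}.

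First I would set $V(y) := \tfrac12\lapl\Psi(y) - \tfrac14|\grad\Psi(y)|^2$, which is bounded, $|V|\le C_V := \tfrac12\|\lapl\Psi\|_\infty + \tfrac14\|\grad\Psi\|_\infty^2 < \infty$, since $\Psi\in C_b^2(\R^d)$, and define $q^t(y;x) := e^{\frac12\Psi(y) - \frac12\Psi(x)}\,\eta^t(y;x)$. A direct computation — the one that produces the Schrödinger form of a Fokker-Planck operator: expand $\lapl_y$ and $\div_y(\,\cdot\,\grad\Psi)$ acting on $e^{-\frac12\Psi}\psi$, and observe that the first-order terms in $\psi$ cancel — shows that $q$ satisfies $\partial_t q = \lapl_y q + V q$ in $\R^d\times(0,\infty)$ with $q^t(\cdot\,;x)\to\delta_x$ as $t\downarrow 0$; here I use that $\eta^t(\cdot\,;x)$ is a classical solution for $t>0$ (density in $C^{2,1}$, as recalled before the lemma) and that $e^{\pm\frac12\Psi}$ is smooth and bounded. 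The claimed estimate \eqref{eq: diffusion drift fundamental estimate} is then equivalent to
\begin{equation*}
\theta^t(y-x)\,e^{\beta_0 t} \;\le\; q^t(y;x) \;\le\; \theta^t(y-x)\,e^{\beta_1 t}.
\end{equation*}

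For this last bound I would use a parabolic comparison argument. The functions $w_\pm^t(y) := \theta^t(y-x)\,e^{\beta_\pm t}$, with $\beta_1 := \sup_y V(y)\le C_V$ and $\beta_0 := \inf_y V(y)\ge -C_V$, are respectively a super- and a subsolution of $\partial_t w = \lapl_y w + V w$: indeed, since $\partial_t\theta^t=\lapl_y\theta^t$,
\[
\partial_t w_\pm^t - \lapl_y w_\pm^t - V\,w_\pm^t = (\beta_\pm - V)\,\theta^t(y-x)\,e^{\beta_\pm t},
\]
which has the correct sign by the choice of $\beta_\pm$ and $\theta^t\ge 0$. Since $w_\pm^t(\cdot)\to\delta_x$ as $t\downarrow 0$ as well, the maximum principle gives $w_0^t \le q^t(\cdot\,;x)\le w_1^t$, and undoing the substitution yields \eqref{eq: diffusion drift fundamental estimate} with these $\beta_0,\beta_1\in\R$ (in the affine case $\Psi(x)=c\cdot x$ they both collapse to $-\tfrac14|c|^2$, consistent with \eqref{eq: easy fundamental solution}).

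The only real subtlety — the \emph{main obstacle} — is running the comparison principle with the singular datum $\delta_x$ on the unbounded domain $\R^d$. I would handle this by comparing on a time interval $[\varepsilon,T]$ using the bounded continuous data $q^\varepsilon(\cdot\,;x)$ and $w_\pm^\varepsilon$ together with the semigroup identity $q^{t+\varepsilon}(\cdot\,;x)=\int q^t(\cdot\,;z)\,q^\varepsilon(z;x)\,dz$ (and likewise for $\theta$), and then letting $\varepsilon\downarrow 0$; the Gaussian decay of $\theta^t$ and boundedness of $e^{\pm\frac12\Psi}$ keep everything in a uniqueness class where the maximum principle applies. Equivalently, and perhaps more cleanly, one can bypass the maximum principle via the Duhamel representation $q^t(y;x)=\theta^t(y-x)+\int_0^t\!\!\int \theta^{t-s}(y-z)\,V(z)\,q^s(z;x)\,dz\,ds$ with a Gronwall-type iteration, or invoke the Feynman-Kac formula $q^t(y;x)=\theta^t(y-x)\,\mathbb E\big[\exp\!\int_0^t V(\omega_s)\,ds\big]$ over the Brownian bridge $\omega$ from $x$ to $y$ in time $t$, from which $e^{-C_V t}\le q^t(y;x)/\theta^t(y-x)\le e^{C_V t}$ is immediate.
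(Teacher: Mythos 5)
Your proof is correct in substance but follows a genuinely different route from the paper. You conjugate: setting $q^t(y;x)=e^{\frac12\Psi(y)-\frac12\Psi(x)}\eta^t(y;x)$ turns the Fokker--Planck kernel into the kernel of $\partial_t q=\lapl q+Vq$ with the bounded potential $V=\tfrac12\lapl\Psi-\tfrac14|\grad\Psi|^2$, and you then compare $q$ with $\theta^t e^{\beta_\pm t}$ (with $\beta_0=\inf V$, $\beta_1=\sup V$) by a classical comparison principle, or by Duhamel/Feynman--Kac. The paper never conjugates and never uses the classical form of the PDE: it keeps the same comparison functions $\theta^t(y-x)e^{-\frac12\Psi(y)+\frac12\Psi(x)+\beta t}$, computes their defect $f_\beta=(\beta-V)\,\theta^t e^{-\frac12\Psi+\beta t}$ (the same $V$ appears, and the sign condition on $f_{\beta_0},f_{\beta_1}$ is exactly your choice of constants), and obtains the sign of $\zeta_\beta=\eta^t-\theta^te^{-\frac12\Psi+\beta t}$ by duality, testing the weak formulation of Definition~\ref{def: fundamental solution} against smooth bounded solutions of the adjoint equation with approximated Heaviside terminal data; this weak maximum principle sidesteps both the singular initial datum and any uniqueness class. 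What your route buys is brevity and transparency (the Feynman--Kac formula over the Brownian bridge makes the bound immediate); what it costs is precisely the point you flag: to apply the maximum principle, Duhamel, or Feynman--Kac to the \emph{given} $\eta$ you must place $q$ in a uniqueness class for the potential equation with datum $\delta_x$ --- e.g.\ via the Gaussian upper bound coming with Friedman's parametrix construction, or Widder-type uniqueness for nonnegative solutions --- so as to identify $q$ with the Feynman--Kac/Duhamel kernel. Also, your first remedy is circular as literally stated, since at time $\varepsilon$ the data $q^\varepsilon$ and $w_\pm^\varepsilon$ are not yet known to be ordered; the repair is to compare the evolution started from the mollified Gaussian $\theta^\varepsilon(\cdot-x)$ (i.e.\ $\int q^t(\cdot\,;z)\theta^\varepsilon(z-x)\,dz$) with the supersolution $\theta^{t+\varepsilon}e^{\beta_1(t+\varepsilon)}$ and then let $\varepsilon\downarrow0$, or simply to rely on your Duhamel/Feynman--Kac alternatives. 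With such a standard repair your argument is a valid alternative proof; the paper's duality argument has the mild advantage of applying to any fundamental solution in the sense of Definition~\ref{def: fundamental solution} without a classical-regularity or identification step.
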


\begin{proof} For brevity we assume that $x=0$ and $\Psi(0) \equiv 0$, and we omit the dependence on~$x$. For $\beta \in \R$ define:
	\begin{equation*}
		\zeta_\beta (y,t) := \eta^t(y) - \theta^t(y) e^{-\tfrac12\Psi(y) + \beta t}.
	\end{equation*}

	By partial integration we obtain for all $0<\epsilon<T$ and $\phi \in C_b^{2,1}(\R^d \times [\epsilon,T])$:
	\begin{multline}
	\label{eq: weak PDE qbeta}
		\int_\epsilon^T \! \int \! \left (\partial_t \phi(y,t) + \lapl \phi(y,t) - \grad \Psi(y)\cdot \grad \phi(y,t) \right) \zeta_\beta(y,t) \, dy \, dt  \\
			= \int_\epsilon^T\!\int\! \phi(y,t) f_\beta(y,t) \, dy \, dt + \int \! \phi(y,T) \zeta_\beta(y,T) \, dy - \int \! \phi(y,\epsilon) \zeta_\beta(y,\epsilon)\, dy 
  \end{multline}
  with:
	\begin{equation*}
		f_\beta(y,t) := \left(- \tfrac12 \lapl \Psi(y) + \frac{1}{4} |\grad \Psi(y)|^2 + \beta \right) \theta^t(y) e^{-\tfrac12 \Psi(y) + \beta t}.
	\end{equation*}
	Because $\grad \Psi$ and $\lapl \Psi$ are bounded, there are $\beta_0, \beta_1 \in \R$ such that:
	\begin{equation}
	\label{eq: double inequality}
		f_{\beta_0}(y,t) \leq 0 \leq f_{\beta_1}(y,t).
	\end{equation}

	First we exploit this inequality for $\beta_1$. Let $\phi$ be the solution of the adjoint problem:
	\begin{equation}
	\label{eq: diffusion drift adjoint problem}
		-\partial_t \phi = \lapl \phi - \grad \Psi\cdot \grad \phi 
	\end{equation}
	with end condition:
	\begin{equation*}
		\phi^T(y):= H(\zeta_{\beta_1}\!(y,T)),
	\end{equation*}	
	where $H$ is the Heaviside function. Again by \cite[Th.~1.10]{Friedman1964} there exists a positive fundamental solution $\eta^\ast$ and hence a positive bounded solution $\phi \in C^{2,1}(\R^d\times[0,T))$ to \eqref{eq: diffusion drift adjoint problem}. However, \eqref{eq: weak PDE qbeta} requires the test functions to be in $C_b^{2,1}(\R^d\times(0,T])$. To this aim we approximate $\phi$ in the following way. First, let $\phi^T_n$ be a sequence in $C_0^\infty(\R^d)$ such that
	\begin{equation*}
		\phi^T_n \to \phi^T \text{ weakly-$\ast$ in } L^\infty(\R^d).
	\end{equation*}
	Next, let $\phi_n \in C_b^{2,1}(\R^d\times [0,T])$ be the solution of \eqref{eq: diffusion drift adjoint problem} with approximated end condition $\phi^T_n$. For this sequence \eqref{eq: weak PDE qbeta} becomes:
	\begin{eqnarray}
		0 &=& 	\int_\epsilon^T\!\int \phi_n(y,t) f_{\beta_1}\!(y,t) \, dy \, dt + \int\! \phi^T_n(y)\zeta_{\beta_1}\!(y,T) \, dy - \int\! \phi_n(y,\epsilon) \zeta_{\beta_1}\!(y,\epsilon) \, dy \notag\\
		&\xrightarrow[\epsilon \to 0]{(i.)}&	\int_0^T\!\int \phi_n(y,t) f_{\beta_1}\!(y,t) \, dy \, dt + \int\! \phi^T_n(y)\zeta_{\beta_1}\!(y,T) \, dy \notag\\
		&\xrightarrow[n \to \infty]{(ii.)}& \int_0^T\!\int \phi(y,t) f_{\beta_1}\!(y,t) \, dy \, dt + \int\! H\!\left(\zeta_{\beta_1}\!\left(y,T\right)\right) \zeta_{\beta_1}\!(y,T) \, dy,
			\label{eq: double approximation result} 
	\end{eqnarray}
	using properties $(i.)$ and $(ii.)$ that we will prove below. From this we infer for the positive part of $\zeta_{\beta_1}$: 
	\begin{equation*}
		0 \leq \int\! \zeta^+_{\beta_1}\!(y,T)\, dy \stackrel{\eqref{eq: double approximation result}}{=} - \int_0^T \! \int \! \underbrace{\phi(y,t)}_{\geq 0} \underbrace{ f_{\beta_1}\!(y,t)}_{\geq 0} \, dy\, dt \leq 0.
	\end{equation*}

	Analogously we use the other inequality from \eqref{eq: double inequality} and conclude that for all $T>0$:
  \begin{equation*}
  	\zeta_{\beta_1}\!(y,T) \leq 0 \leq \zeta_{\beta_0}(y,T) \,\,\,  \text{ for almost every } y \in \R^d,
  \end{equation*}
  which proves the statement. 
  
  We still owe the reader the proof of the two approximations in \eqref{eq: double approximation result}. 
  \begin{enumerate}
  	\item[(i.)]	The argument follows from $\zeta_{\beta_1}\!(x,\epsilon) \to 0$ weakly in $L^1(\R^d)$ as $\epsilon \to 0$. Then for any fixed $n$:
				 	\begin{multline*}
						\left| \int\!\left(\phi_n(y,\epsilon)-\phi_n(y,0)\right) \zeta_{\beta_1}\!(y,\epsilon)\, dy \right| = \left| \int\! \int\limits_0^\epsilon\! \partial_t \phi_n(y,t) \, dt \, \zeta_{\beta_1}\!(y,\epsilon) \, dy \right| \\
						\leq \underbrace{\epsilon}_{\to 0} \underbrace{\left\| \partial_t \phi_n \right\|_{L^\infty(\R^d \times [0,T])}}_{\text{bounded}} \underbrace{ \left| \int\! \zeta_{\beta_1}\!(y,\epsilon)\, dy \right|}_{\to 0}\xrightarrow[\epsilon\to 0]{} 0.
					\end{multline*}
					Hence:
					\begin{multline*}
						\int\! \phi_n(y,\epsilon) \zeta_{\beta_1}\!(y,\epsilon)\,dy	\\
							= \int\! \left(\phi_n(y,\epsilon)-\phi_n(y,0)\right) \zeta_{\beta_1}\!(y,\epsilon)\, dy + \int\! \phi_n(y,0) \zeta_{\beta_1}\!(y,\epsilon)\, \xrightarrow[\epsilon\to 0]{} 0.
					\end{multline*}
  	\item[(ii.)] For the second convergence in \eqref{eq: double approximation result}, we can assume that the approximation of the end condition satisfies:
  				\begin{equation*}
  					0 \leq \phi_n^T(y) \leq \phi^T(y) \,\, \text{ for all } y \in \R^d.
  				\end{equation*}
  				Therefore:
  				\begin{equation*}
  					\big| \phi_n(y,t) f_{\beta_1}\!(y,t) \big| \leq \big| \phi(y,t) f_{\beta_1}\!(y,t) \big| \leq \underbrace{\| \phi^T \|_{L^\infty(\R^d)} \big| f_{\beta_1}\!(y,t) \big|}_{\in L^1(\R^d\times(0,T))}.
  				\end{equation*}
					Since for the fundamental solution $\eta^\ast$ of the adjoint problem \eqref{eq: diffusion drift adjoint problem} there holds $z \mapsto {\eta^\ast}^t (y,z) \in L^1(\R^d)$, we have:
  				\begin{equation*}
  					\phi_n(y,t) = \int \! {\eta^\ast}^t (y,z) \phi_n^T(z) \, dz \xrightarrow[n\to\infty]{} \int\! {\eta^\ast}^t(y,z)\phi^T(z)\, dz = \phi(y,t)
  				\end{equation*}
  				pointwise. The Dominated Convergence Theorem then gives
  				\begin{equation*}
  					\phi_n f_{\beta_1} \xrightarrow[n\to\infty]{L^1} \phi f_{\beta_1}.
  				\end{equation*}
	\end{enumerate}  
\end{proof}

Observe that the factors $1/2$ in the exponent of \eqref{eq: diffusion drift fundamental estimate} correspond to the factors $1/2$ of the energy in expression \eqref{def: KFP}.
We are now ready to prove the Mosco-convergence result.
\begin{proof}[Proof of Theorem~\ref{theo: Fokker-Planck Gamma convergence}] To prove the lower bound, take any sequence $\rho^h \weakto \rho$ in $\P_2^\S(\R^d)$ and calculate
\begin{align*}
	&	\hskip-1cm\lefteqn{\liminf_{h\to 0} \,\J^h\FP(\rho^h|\init\rho) - \tfrac{1}{4h}d^2(\init\rho, \rho^h)} \\
		&\stackrel{\eqref{def: Fokker-Planck rate functional}}{\hq =\hq} \liminf_{h\to 0} \inf_{q\in\Gamma(\init\rho,\rho^h)} \mathcal{H}(q|\init\rho \eta^h) - \tfrac{1}{4h}d^2(\init\rho, \rho^h) \\
		 				&\stackrel{\eqref{eq: diffusion drift fundamental estimate}}{\hq\geq\hq} \liminf_{h\to 0} \inf_{q\in\Gamma(\init\rho,\rho^h)} \! \mathcal{H}(q|\init\rho \theta^h) \\
						& \qquad\qquad - \iint \!\left(-\tfrac12 \Psi(y) + \tfrac12 \Psi(x)+ \beta_1 h\right)q(dx\,dy)- \tfrac{1}{4h}d^2(\init\rho, \rho^h) \\
		 				&\hq=\hq \liminf_{h\to 0} \inf_{q\in\Gamma(\init\rho,\rho^h)} \! \mathcal{H}(q|\init\rho \theta^h) - \tfrac{1}{4h}d^2(\init\rho,\rho^h) + \tfrac12 \E(\rho^h) - \tfrac12 \E(\init\rho) - \beta_1 h \\
		 				&\hq\geq\hq \tfrac12 \S(\rho) -\tfrac12 \S(\init\rho) + \tfrac12 \E(\rho) - \tfrac12 \E(\init\rho),
	\end{align*}
where the last inequality follows from Conjecture~\ref{conj} and the (narrow) continuity of $\rho\mapsto \E(\rho)$.
	
\smallskip
To construct a recovery sequence, fix a $\rho \in \P_2^\S(\R^d)$ and take a recovery sequence $\rho^h\to\rho$ from Conjecture~\ref{conj}, in the strong topology of $\P_2^\S(\R^d)$. Then similarly:
\begin{equation*}
\begin{split}
\limsup_{h\to 0} {}&\J^h\FP(\rho^h|\init\rho) - \tfrac{1}{4h}d^2(\init\rho, \rho^h) 
\stackrel{\eqref{def: Fokker-Planck rate functional}}{\hq =\hq}
  \limsup_{h\to 0} \inf_{q\in\Gamma(\init\rho,\rho^h)} \mathcal{H}(q|\init\rho \eta^h) -
  \tfrac{1}{4h}d^2(\init\rho, \rho^h) \\
&\stackrel{\eqref{eq: diffusion drift fundamental estimate}}{\hq\leq\hq} \limsup_{h\to 0} \inf_{q\in\Gamma(\init\rho,\rho^h)}	
  \mathcal{H}(q|\init\rho \theta^h) - \tfrac{1}{4h}d^2(\init\rho, \rho^h) 
    + \tfrac12 \E(\rho^h) - \tfrac12 \E(\init\rho) - \beta_0 h \\
&\hq\leq\hq\tfrac12 \S(\rho) -\tfrac12 \S(\init\rho) + \tfrac12 \E(\rho) 
  - \tfrac12 \E(\init\rho).
\end{split}
\end{equation*}
\end{proof}

\section{Diffusion with drift and decay}
\label{sec: diff dec} 

In this section we discuss the case of diffusion with decay. For brevity, we first consider the case without drift ($\Psi\equiv0$, $\lambda>0$). First we describe the particle system that we use as a microscopic model for this equation, and calculate the corresponding large-deviation principle. We proceed with the main results for this equation: Mosco-convergence to an energy-dissipation functional, and convergence of the approximation scheme to the solution of the diffusion-decay equation. Finally, we discuss how the system can be generalised to include drift, and how the decay can be generalised to diffusion-reaction equations.

\subsection{Microscopic model}
\label{sec: diff dec micro model}

In contrast to the case without decay, the diffusion-decay equation \eqref{eq: diffusion decay} is not mass-conserving, implying that the Wasserstein distance between two time instances of a solution is not defined. To overcome this difficulty, we assume that all decayed matter continues to exist after its decay, but in a different form. We thus distinguish between \emph{normal}, non-decayed matter, denoted by $N$, and \emph{decayed} or \emph{dark matter}, denoted by $D$.

The microscopic model now consists of a finite number $n$ of independent non-interacting point particles moving in $\R^d\times\{N,D\}$. Similarly to the non-decaying model, we fix an initial distribution $\init\rho \in \P(\R^d\times\{N,D\})$ and initial positions $x_i\in\R^d$ and states $\mu_i \in \{N,D\}$ such that: 
\begin{align*}
	&\frac{1}{n} \sum_{\substack{i=1\\\mu_i=N}}^n \delta_{x_i}\longweakto \init\rho_N \quad\text{ and }\quad \frac{1}{n} \sum_{\substack{i=1\\\mu_i=D}}^n \delta_{x_i}\longweakto \init\rho_D & \text{ as } n\to \infty.
\end{align*}
For the dynamics of the system we assume that the motion of all particles in $\R^d$ is independent of their motion in $\{N,D\}$ (this construction will yield separate terms in the rate functional for both processes). We take the motion in $\R^d$ during some fixed time step $h>0$ to be Brownian, ie.\ governed by the transition probability $\theta^h$ from \eqref{def: diffusion kernel}. For the motion in $\{N,D\}$,  {we assume that the time after which a particle changes from $N$ to $D$ is exponentially distributed with rate $\lambda$. Since decay is a one-way street, the probability for a particle to change back from $D$ to $N$ is zero. } This results in a probability for a particle to change from state $\mu$ to $\nu$ during the time step $h$ of
\begin{equation*}
	r^h_{\mu\nu} :=	\begin{cases}	e^{-\lambda h}, 		&\mu=N, \nu=N	\\ 
													1 - e^{-\lambda h},	&\mu=N, \nu=D	\\ 
													0,									&\mu=D, \nu=N \\
													1,									&\mu=D,	\nu=D.
						\end{cases}
\end{equation*}

Denote $L_n^h := n^{-1} \sum_{i=1}^n \delta_{(Y^h_i,\nu^h_i)}$, where $Y^h_i \in \R^d$ and $\nu^h_i \in \{N,D\}$ are the random position and state of the $i^\text{th}$ particle at time $h$. Indeed, $L_n^h$ converges almost surely to the solution at time $h$ of the system \cite[Th. 11.4.1]{Dudley1989}
\begin{equation}
\label{def:diff dec system}
	\begin{cases}	\partial_t u_N = \lapl u_N - \lambda u_N,	&\R^d \times (0,\infty),\\
								\partial_t u_D = \lapl u_D + \lambda u_N,	&\R^d \times (0,\infty)
	\end{cases}
\end{equation}
with initial condition $(\init\rho_N,\init\rho_D)$. In this sense, the thus defined particle system is a microscopic interpretation of the diffusion-decay equation \eqref{eq: diffusion decay} (if we ignore the dark matter).

\subsection{Large deviations to gradient flow to PDE}

While the inspiration for this paper was equation~\eqref{eq: diffusion drift decay}, the construction above suggests to consider not only~\eqref{eq: diffusion drift decay} but also the augmented system of equations~\eqref{def:diff dec system} (and its extensions to non-zero $\Psi$). For this reason we derive a large-deviation principle and a corresponding energy-dissipation functional for this system, and afterwards simplify by contraction, leading to results for~\eqref{eq: diffusion drift decay}.

Let $M^h_n := n^{-1}\sum_{i=1}^n \delta_{(x_i,\mu_i, Y^h_i,\nu^h_i)}$ be the empirical measure of the initial and final configurations corresponding to the particle system defined above. Then (see Theorem~\ref{theo: pair measure LDP}) the sequence $M^h_n$ satisfies a large-deviation principle in $\P(\R^d\times\{N,D\}\times\R^d\times\{N,D\})$ with rate $n$ and rate functional
\begin{equation*}
	\begin{cases}
		\displaystyle
		\ \smash{\sum_{\substack{\mu=N,D\\\nu=N,D}}}\mathcal{H}(q_{\mu\nu}|\init\rho_\mu r^h_{\mu\nu}\theta^h),	&	\text{if } q(\,\cdot \times \{N\} \times \R^d \times \{N,D\}) = \init\rho_N(\cdot)\\																			& \quad \text{and } q(\,\cdot \times \{D\} \times \R^d \times \{N,D\}) = \init\rho_D(\cdot),\\[3\jot]
		\ \infty, &	\text{otherwise},	
	\end{cases}
\end{equation*}
writing $q_{\mu\nu}(dx\,dy) = q(dx \times \{\mu\} \times dy \times \{\nu\})$. We note that  definitions \eqref{def: entropy} and \eqref{eq:relative entropy} indeed allow for non-negative Borel measures that are not necessarily probability measures. 

In contrast to the previous case without decay, the special structure of the decay forces us to keep track of more information: not only of the total amount of  dark matter, but of both the pre-existing dark matter and the normal matter that is converted to dark matter in the present time step, separately. We thus obtain a large-deviation principle for the triple empirical measures $\tfrac1n \sum_{i=1}^n \delta_{(\mu_i, Y^h_i,\nu^h_i)}$ with rate $n$ and rate functional (the subscript stands for `Diffusion equation with Decay')
\begin{multline}
\label{def:diff dec rate functional}
	\J^h\DDec(\rho\NN,\rho\ND,\rho\DD|\init\rho_N,\init\rho_D) \; := \inf\Big\{ \displaystyle\sum_{\mu\nu=N\!N,N\!D,D\!D} \inf_{q_{\mu\nu}\in\Gamma(\init\rho_{\mu\nu}\!,\rho_{\mu\nu})}\mathcal{H}\!\left(q_{\mu\nu}\vert \init\rho_\mu r^h_{\mu\nu} \theta^h\right) : \\
 \init\rho\NN,\init\rho\ND \in \mathcal{M}^+(\R^d) \text{ such that } \init\rho\NN+\init\rho\ND=\init\rho_N \Big\}.
\end{multline}
Here $\rho_{\mu\nu}$ is the final-time matter of type $\nu$ that was initially of type $\mu$, and similarly $\init\rho_{\mu\nu}$ is that part of the initial distribution $\init\rho_\mu$ that will become of type $\nu$ at time $h$ (see Figure~\ref{fig: split measures}). Observe that the term $\mathcal{H}(q_{D\!N}|0)$ is zero if and only if $q_{D\!N} \equiv 0 \ae$, and $\infty$ otherwise; indeed no mass is allowed to change from $D$ to $N$. Hence we omit the dependency on $\rho_{D\!N}$.

\begin{figure}[h]
\centering
\begin{tikzpicture}[scale=1, >= latex, decoration={brace, amplitude=4pt}]
  \draw[decorate, thick] (1.2,1.1) -- node[midway, left=3pt]{$\init\rho_N$} (1.2,2.9);
  \draw[decorate, thick] (1.2,0.1) -- node[midway, left=3pt]{$\init\rho_D$} (1.2,0.9);
  \draw[decorate, thick] (7.2,2.9) -- node[midway, right=3pt]{$\rho_N$} (7.2,2.1);
  \draw[decorate, thick] (7.2,1.9) -- node[midway, right=3pt]{$\rho_D$} (7.2,0.1);
	\draw (1.5, 2)[fill=gray!5]	rectangle node[anchor=center] {$\init\rho\NN$}(3, 3);
	\draw (1.5, 1)[fill=gray!5]	rectangle node[anchor=center] {$\init\rho\ND$} (3, 2);
	\draw (1.5, 0)[fill=gray!30]	rectangle node[anchor=center] {$\init\rho\DD$} (3, 1);
	\draw[->, thick] (3,2.5)-- node[anchor=south]{$q\NN$} (5.5,2.5);
	\draw[->, thick] (3,1.5)-- node[anchor=south]{$q\ND$}(5.5,1.5);	
	\draw[->, thick] (3,0.5)-- node[anchor=south]{$q\DD$}(5.5,0.5);
	\draw (5.5, 2)[fill=gray!5]	rectangle node[anchor=center] {$\rho\NN$}(7, 3);
	\draw (5.5, 1)[fill=gray!30]	rectangle node[anchor=center] {$\rho\ND$}(7, 2);
	\draw (5.5, 0)[fill=gray!30]	rectangle node[anchor=center] {$\rho\DD$}(7, 1);
\end{tikzpicture}
\caption{Notation for the various measures in the diffusion-decay equation. The measures $q_{\mu\nu}$ are pair (coupled) measures, with first and second marginals indicated to the left and right of the arrows. The various marginals $\init\rho_{\mu\nu}$ and $\rho_{\mu\nu}$ combine as indicated to form the observed normal ($\init\rho_N$ and $\rho_N$) and dark matter ($\init\rho_D$ and $\rho_D$) at the initial and final times.
 \label{fig: split measures}}
\end{figure}
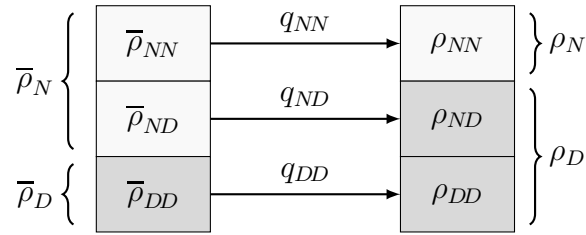
Theorem~\ref{theo: diff dec gamma convergence} below shows that for small $h$ we have $\J^h\DDec \approx \K^h\DDec$, where
\begin{equation}
\label{def:diff dec gradient flow}
\begin{split}
	\K^h\DDec(\rho\NN,\rho\ND,\rho\DD| \init\rho_N,\init\rho_D)\; := & - \tfrac12 \S(\rho\NN+\rho\ND) -\tfrac12 \S(\init\rho_N) + \tfrac{1}{4h}d^2(\init\rho_N,\rho\NN+\rho\ND)\\
																																	& + \tfrac12\S(\rho\DD) -\tfrac12\S(\init\rho_D) + \tfrac{1}{4h}d^2(\init\rho_D,\rho\DD) \\
																																	& + \S(\rho\NN) + \S(\rho\ND) - |\rho\NN| \log r^h\NN  - |\rho\ND|\log r^h\ND.
\end{split}
\end{equation}
 Let the admissible sets be:
\begin{align*}
	B^0										&:= \left\{(\init\rho_N,\init\rho_D) \in \mathcal{M}^+(\R^d)^2 : \init\rho_N+ \init\rho_D \in \P_2^\S(\R^d)\right\}; \\
	B(\init\rho_N,\init\rho_D) 	&:= \big\{(\rho\NN,\rho\ND,\rho\DD) \in \mathcal{M}^+(\R^d)^3 : \tfrac{1}{|\init\rho_N|}(\rho\NN+\rho\ND) \in \P_2^\S(\R^d) \\
	&\hspace{7cm} \text{ and } \tfrac{1}{|\init\rho_D|} \rho\DD \in \P_2^\S(\R^d)\big\},
\end{align*}
equipped with the product of the weak or strong topologies from Section~\ref{sec:Mosco convergence}. We remark that $(\rho\NN,\rho\ND,\rho\DD) \in B(\init\rho_N,\init\rho_D)$ implies that $|\init\rho_N| = |\rho\NN+\rho\ND| \text{ and } |\init\rho_D|=|\rho\DD|$.
\begin{theo}
\label{theo: diff dec gamma convergence}
  Assume that Conjecture~\ref{conj} holds. Then for all $(\init\rho_N,\init\rho_D) \in B^0$
	\begin{equation}
	\label{eq: diffusion splitting gamma convergence}
	\begin{split}
		&\J^h\DDec(\cdot\NN,\cdot\ND,\cdot\DD|\init\rho_N,\init\rho_D) - \frac{1}{4h}d^2(\init\rho_N\,,\cdot\NN+\cdot\ND) - \frac{1}{4h}d^2(\init\rho_D\,,\cdot\DD) \\
		&\hspace{8cm}+ |\cdot\ND\!|\,\log r^h\ND   + |\cdot\NN\!|\,\log r^h\NN \\
		&\qquad \xrightarrow[h\to 0]{M} \quad {}-\tfrac12 \S(\cdot\NN+\cdot\ND) - \tfrac12 \S(\init\rho_N) + \tfrac12 \S(\cdot\DD) - \tfrac12 \S(\init\rho_D) + \S(\cdot\NN) + \S(\cdot\ND) . \\
	\end{split}
	\end{equation}
	in $B(\init\rho_N,\init\rho_D)$.
\end{theo}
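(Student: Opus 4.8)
The plan is to turn the asserted Mosco convergence into an \emph{exact} algebraic identity for the functional on the left-hand side of \eqref{eq: diffusion splitting gamma convergence} — abbreviate it $\mathcal{A}^h(\rho\NN,\rho\ND,\rho\DD)$ — and then to read off the two halves of the statement from two applications of Conjecture~\ref{conj} together with lower semicontinuity of the relative entropy. Throughout write $\sigma:=\rho\NN+\rho\ND$.

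First I would unwind $\J^h\DDec$ from \eqref{def:diff dec rate functional}. The chain rule for the relative entropy, applied with respect to first marginals, together with the scaling identity $\mathcal{H}(q\,|\,c\,\mu)=\mathcal{H}(q\,|\,\mu)-|q|\log c$, gives for every admissible split $\init\rho\NN+\init\rho\ND=\init\rho_N$ and every $q\NN\in\Gamma(\init\rho\NN,\rho\NN)$
\[
 \mathcal{H}(q\NN\,|\,\init\rho_N\,r^h\NN\,\theta^h)\;=\;\mathcal{H}(\init\rho\NN\,|\,\init\rho_N)\;-\;|\rho\NN|\log r^h\NN\;+\;\mathcal{H}(q\NN\,|\,\init\rho\NN\,\theta^h),
\]
and the analogue for $q\ND$ (with $r^h\ND$), while the $D\!D$-term is untouched since $r^h\DD=1$ and $\init\rho\DD=\init\rho_D$. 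Taking infima over the $q_{\mu\nu}$, the logarithmic mass terms cancel against the summands $|\cdot\ND|\,\log r^h\ND+|\cdot\NN|\,\log r^h\NN$ of \eqref{eq: diffusion splitting gamma convergence}, and since the split does not enter the $D\!D$-term,
\begin{multline*}
 \mathcal{A}^h=\Bigl(\inf_{\init\rho\NN+\init\rho\ND=\init\rho_N}\bigl[\J^h\Diff(\rho\NN|\init\rho\NN)+\J^h\Diff(\rho\ND|\init\rho\ND)\\
 {}+\mathcal{H}(\init\rho\NN|\init\rho_N)+\mathcal{H}(\init\rho\ND|\init\rho_N)\bigr]-\tfrac1{4h}d^2(\init\rho_N,\sigma)\Bigr)+\Bigl(\J^h\Diff(\rho\DD|\init\rho_D)-\tfrac1{4h}d^2(\init\rho_D,\rho\DD)\Bigr).
\end{multline*}

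The key step is to collapse the infimum over splits. I would encode a competitor $(q\NN,q\ND)$ by the coupling $Q:=q\NN+q\ND\in\Gamma(\init\rho_N,\sigma)$ and the density $F:=dq\NN/dQ\in[0,1]$, so that $\init\rho\NN=\bar F\,\init\rho_N$ with $\bar F(x):=\int F(x,y)\,Q_x(dy)$. A direct computation then shows that the four bracketed terms combine into $\mathcal{H}(Q\,|\,\init\rho_N\theta^h)+\int(F\log F+(1-F)\log(1-F))\,dQ$; the essential cancellation is that the $\bar F$-contributions produced by the chain rule are cancelled \emph{exactly} by $\mathcal{H}(\init\rho\NN|\init\rho_N)+\mathcal{H}(\init\rho\ND|\init\rho_N)$. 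Disintegrating $Q$ along its second marginal $\sigma$ and using Jensen's inequality under the constraint $\pi^2(FQ)=\rho\NN$, the last integral is minimised — for \emph{every} admissible $Q$ — by $F(x,y)=f(y)$ with $f:=d\rho\NN/d\sigma$, the minimum being $\int(f\log f+(1-f)\log(1-f))\,d\sigma=\mathcal{H}(\rho\NN|\sigma)+\mathcal{H}(\rho\ND|\sigma)$. Hence the two infima decouple and one obtains, for every $h>0$, the exact identity
\begin{multline*}
 \mathcal{A}^h(\rho\NN,\rho\ND,\rho\DD)=\Bigl(\J^h\Diff(\sigma|\init\rho_N)-\tfrac1{4h}d^2(\init\rho_N,\sigma)\Bigr)+\mathcal{H}(\rho\NN|\sigma)+\mathcal{H}(\rho\ND|\sigma)\\
 {}+\Bigl(\J^h\Diff(\rho\DD|\init\rho_D)-\tfrac1{4h}d^2(\init\rho_D,\rho\DD)\Bigr).
\end{multline*}

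From here the conclusion is immediate. Rescaling $\sigma,\init\rho_N$ (resp.\ $\rho\DD,\init\rho_D$) to probability measures — the $|\init\rho_N|\log|\init\rho_N|$ terms created by rescaling $\S$ cancel, and one uses $\init\rho_N/|\init\rho_N|,\ \sigma/|\init\rho_N|,\ \init\rho_D/|\init\rho_D|,\ \rho\DD/|\init\rho_D|\in\P_2^\S$, the cases $|\init\rho_N|=0$ or $|\init\rho_D|=0$ being trivial — Conjecture~\ref{conj} applies to the two diffusion brackets and makes them Mosco-converge to $\tfrac12\S(\sigma)-\tfrac12\S(\init\rho_N)$ and $\tfrac12\S(\rho\DD)-\tfrac12\S(\init\rho_D)$ respectively. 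For the liminf inequality I would add the joint narrow lower semicontinuity of $\mathcal{H}$, which gives $\liminf_h[\mathcal{H}(\rho\NN^h|\sigma^h)+\mathcal{H}(\rho\ND^h|\sigma^h)]\ge\mathcal{H}(\rho\NN|\sigma)+\mathcal{H}(\rho\ND|\sigma)$ whenever $\rho\NN^h\weakto\rho\NN$ and $\rho\ND^h\weakto\rho\ND$; summing the three liminf inequalities and using $\mathcal{H}(\rho\NN|\sigma)+\mathcal{H}(\rho\ND|\sigma)=\S(\rho\NN)+\S(\rho\ND)-\S(\sigma)$ reproduces exactly the right-hand side of \eqref{eq: diffusion splitting gamma convergence}. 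For the recovery sequence I would take recovery sequences $\sigma^h\to\sigma$ and $\rho\DD^h\to\rho\DD$ furnished (after rescaling) by Conjecture~\ref{conj}, and set $\rho\NN^h:=f\sigma^h$, $\rho\ND^h:=(1-f)\sigma^h$; since $f$ is bounded and $\sigma^h\to\sigma$ in the strong topology, Lemma~\ref{theo: strong entropy convergence} gives $\sigma^h\to\sigma$ in $L^1$, whence $\rho\NN^h\to\rho\NN$ and $\rho\ND^h\to\rho\ND$ in the strong topology and $\mathcal{H}(\rho\NN^h|\sigma^h)+\mathcal{H}(\rho\ND^h|\sigma^h)=\int(f\log f+(1-f)\log(1-f))\,d\sigma^h\to\mathcal{H}(\rho\NN|\sigma)+\mathcal{H}(\rho\ND|\sigma)$; adding the three limits yields $\limsup_h\mathcal{A}^h\le$ the right-hand side. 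The one genuinely delicate point is the exact identity above, and within it the cancellation between the entropic cost $\mathcal{H}(\init\rho\NN|\init\rho_N)+\mathcal{H}(\init\rho\ND|\init\rho_N)$ of splitting the initial datum and the ``mixing surplus'' incurred by coupling $q\NN$ with $q\ND$; once this is in place no equicoercivity estimate is needed, and the remainder is a direct application of Conjecture~\ref{conj} and of Lemma~\ref{theo: strong entropy convergence}.
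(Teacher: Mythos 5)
Your proposal is correct and takes essentially the same route as the paper: the exact identity you derive for $\mathcal{A}^h$ is precisely the paper's reformulation \eqref{eq:JhDec-rewrite} of $\J^h\DDec$ (your optimal choice $F=d\rho\NN/d\sigma$ is the paper's competitor $\tilde q_{N\nu}=\tfrac{d\rho_{N\nu}}{d\rho\NT}q\NT$, with your Jensen step playing the role of Gibbs' inequality). From there both arguments coincide: two applications of Conjecture~\ref{conj} after rescaling to probability measures, narrow lower semicontinuity of the mixing term $\S(\rho\NN)+\S(\rho\ND)-\S(\rho\NN+\rho\ND)=\mathcal{H}(\rho\NN|\sigma)+\mathcal{H}(\rho\ND|\sigma)$ for the lower bound, and the same recovery construction $\rho\NN^h=\tfrac{d\rho\NN}{d\sigma}\sigma^h$ combined with Lemma~\ref{theo: strong entropy convergence}.
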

Note that we have not only subtracted three singular terms from $\J^h\DDec$, analogously to Theorem~\ref{theo: Fokker-Planck Gamma convergence}, but also the $h$-order term $-|\cdot\NN\!|\log r^h\NN$; the latter is for reasons of symmetry and to simplify calculations.


\bigskip

Finally, we show that the functional $\K^h\DDec$ in~\eqref{def:diff dec gradient flow} indeed defines a variational formulation of the diffusion-decay equation \eqref{eq: diffusion decay}. In view of completeness, and of generalisations to diffusion-reaction equations that we will discuss in Section~\ref{sec:extensions}, we prove convergence of the full scheme, including the dark matter, to the system of equations~\eqref{def:diff dec system}. We then derive the corresponding result for the single diffusion-decay equation~\eqref{eq: diffusion decay} by minimising over the dark matter (see Remark~\ref{contr:ignore-dark-matter} below), a procedure essentially the same as the contraction principle (Section~\ref{sec: large deviations}). Because we keep track of the dark matter, the matter that decays in a time step should be added to the dark matter already present from the previous iteration.


\begin{theo}
\label{theo: diff dec gradient flow} Let $\rho^0 \in \P^a_2(\R^d)$ and define the sequence $\{(\rho_N^{h,k}, \rho_D^{h,k})\}_{k \geq 0}$ by:
    \begin{subequations}
	\label{eq: diff dec approximation scheme}
	\begin{align}
		&(\rho_N^{h,0}, \rho_D^{h,0}) = (\rho^0,0), \notag \\
	\intertext{and for $k \geq 1$:} 
		&(\rho\NN^{h,k}, \rho\ND^{h,k}, \rho\DD^{h,k}) \in \underset{\rho\NN+\rho\ND+\rho\DD \in \P^a_2(\R^d)}{\arg \min} \,  	\K^h\DDec(\rho\NN,\rho\ND,\rho\DD|\rho_N^{h,k-1},\rho_D^{h,k-1}),  \\
		&(\rho_N^{h,k},\rho_D^{h,k})  = (\rho\NN^{h,k}, \rho\ND^{h,k} + \rho\DD^{h,k}).
		\label{eq: diff dec approximation scheme-2}
	\end{align}
	\end{subequations}
These minimisers exist uniquely, and as $h\to 0$ the pair $(\rho_N^{h,\lfloor t/h \rfloor}, \rho_D^{h,\lfloor t/h \rfloor})$ converges weakly in $L^1(\R^d\times(0,T))\times L^1(\R^d\times(0,T))$ to the solution of \eqref{def:diff dec system} with initial condition $(\rho^0,0)$.   
\end{theo}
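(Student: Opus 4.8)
The plan is to reduce the coupled minimising-movement scheme to iterated applications of the classical JKO step for the heat equation, for which Theorem~\ref{theo: JKO} already furnishes the convergence. The starting point is that, since $\Psi\equiv0$, a direct comparison of~\eqref{def:diff dec gradient flow} with the definitions of $\K^h\FP$ and $\K^h\Dec$ yields, with $\sigma:=\rho\NN+\rho\ND$ and $r^h=e^{-\lambda h}$,
\[
  \K^h\DDec(\rho\NN,\rho\ND,\rho\DD|\init\rho_N,\init\rho_D)
   = \K^h\FP(\sigma|\init\rho_N) + \K^h\Dec(\rho\NN|\sigma) + \K^h\FP(\rho\DD|\init\rho_D).
\]
First I would argue that the minimisation decouples. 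On the set where $\K^h\DDec$ is finite the Wasserstein terms force $|\sigma|=|\init\rho_N|$ and $|\rho\DD|=|\init\rho_D|$, the entropy terms force $\sigma$ and $\rho\DD$ to be $\mathcal L^d$-absolutely continuous with finite entropy, and the constraint $\rho\NN+\rho\ND+\rho\DD\in\P_2^a(\R^d)$ is then automatically met (recall $|\init\rho_N|+|\init\rho_D|=1$ inductively); hence the constrained problem in $(\rho\NN,\rho\ND,\rho\DD)$ separates into an independent minimisation over $\sigma$ (with $\rho\NN,\rho\ND$ then determined) and an independent one over $\rho\DD$. For fixed $\sigma$ of finite entropy I would apply Theorem~\ref{theo:simple-x} pointwise and integrate --- using that the extra $-\S(\init\rho)$ term in $\K^h\Dec$ was added precisely to make the minimum value $0$ --- to get $\inf_{\rho\NN}\K^h\Dec(\rho\NN|\sigma)=0$, attained uniquely at $\rho\NN=e^{-\lambda h}\sigma$, so $\rho\ND=(1-e^{-\lambda h})\sigma$. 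Thus the minimiser of $\K^h\DDec$ is: $\sigma$ the unique JKO minimiser of $\K^h\FP(\cdot|\init\rho_N)$, $\rho\DD$ the unique JKO minimiser of $\K^h\FP(\cdot|\init\rho_D)$ (both from Theorem~\ref{theo: JKO}; restricting to $\P_2^a$ is harmless since any finite-entropy competitor is absolutely continuous), and $\rho\NN,\rho\ND$ the explicit split of $\sigma$. This settles existence and uniqueness of the minimisers in~\eqref{eq: diff dec approximation scheme}.

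Next I would exploit a scaling property of the JKO step. Write $T_h$ for the map sending $\mu$ (finite mass, finite second moment) to the unique minimiser of $\tfrac12\S(\cdot)+\tfrac1{4h}d^2(\mu,\cdot)$ among measures of mass $|\mu|$. Since $\S(c\mu)=c\S(\mu)+c|\mu|\log c$ and $d^2(c\mu,c\nu)=c\,d^2(\mu,\nu)$ for $c>0$, the term $\tfrac{c}{2}|\mu|\log c$ is constant in the minimisation, so $T_h$ is positively $1$-homogeneous: $T_h(c\mu)=c\,T_h(\mu)$. Combining this with the decoupling, the scheme reads $\rho_N^{h,k}=e^{-\lambda h}T_h\rho_N^{h,k-1}$ and $\rho_D^{h,k}=(1-e^{-\lambda h})T_h\rho_N^{h,k-1}+T_h\rho_D^{h,k-1}$, starting from $(\rho^0,0)$, and a short induction using homogeneity gives the closed forms
\[
  \rho_N^{h,k}=e^{-\lambda kh}\,T_h^k\rho^0,\qquad \rho_D^{h,k}=(1-e^{-\lambda kh})\,T_h^k\rho^0,
\]
so that $\rho_N^{h,k}+\rho_D^{h,k}=T_h^k\rho^0$, consistent with the fact that the sum of the two equations in~\eqref{def:diff dec system} is the heat equation.

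Finally I would pass to the limit. By Theorem~\ref{theo: JKO}, $f_h:=T_h^{\lfloor t/h\rfloor}\rho^0$ converges weakly in $L^1(\R^d\times(0,T))$ to the solution $u$ of the heat equation with datum $\rho^0$, with $\|f_h(\cdot,t)\|_{L^1}\equiv1$; and $g_h(t):=e^{-\lambda h\lfloor t/h\rfloor}$ satisfies $0\le g_h\le1$ and $\sup_{t\in[0,T]}|g_h(t)-e^{-\lambda t}|\le\lambda h\to0$ because $|h\lfloor t/h\rfloor-t|\le h$. A routine argument (splitting $g_hf_h-e^{-\lambda t}u$ into $g_h(f_h-u)+(g_h-e^{-\lambda t})u$, testing against $\varphi\in L^\infty$, using weak convergence with test function $e^{-\lambda t}\varphi$ and the bound $\|g_h-e^{-\lambda \cdot}\|_\infty\|\varphi\|_\infty\sup_h\|f_h-u\|_{L^1}\to0$) shows that multiplying a weakly-$L^1$-convergent sequence of densities of uniformly bounded mass by such $g_h$ preserves the weak limit; hence $\rho_N^{h,\lfloor t/h\rfloor}=g_hf_h\rightharpoonup e^{-\lambda t}u$ and $\rho_D^{h,\lfloor t/h\rfloor}=(1-g_h)f_h\rightharpoonup(1-e^{-\lambda t})u$ in $L^1(\R^d\times(0,T))$. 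A direct computation verifies that $(e^{-\lambda t}u,(1-e^{-\lambda t})u)$ solves~\eqref{def:diff dec system} with initial data $(\rho^0,0)$, and since that linear triangular parabolic system has a unique solution the limit is identified. I expect the main obstacle to be the bookkeeping in the decoupling step: verifying carefully that the joint constraint $\rho\NN+\rho\ND+\rho\DD\in\P_2^a$ genuinely splits, that $\inf_{\rho\NN}\K^h\Dec(\rho\NN|\sigma)=0$ for every admissible $\sigma$ (including on the set where the density of $\sigma$ vanishes), and that $\K^h\DDec$ admits no minimiser other than the one assembled from the two JKO minimisers and the explicit decay split. Once the closed forms for $\rho_N^{h,k},\rho_D^{h,k}$ are in hand, the convergence follows essentially immediately from Theorem~\ref{theo: JKO}.
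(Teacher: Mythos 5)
Your proposal is correct, and its first half (existence and uniqueness of the minimisers) is essentially the paper's own argument: the paper also freezes $\rho\NT=\rho\NN+\rho\ND$, notes that $\rho\NN\mapsto\S(\rho\NN)+\S(\rho\NT-\rho\NN)-|\rho\NN|\log r^h\NN-|\rho\NT-\rho\NN|\log r^h\ND$ is convex with unique minimiser $\rho\NN=r^h\NN\rho\NT$, and thereby reduces \eqref{eq: KDec to minimise} to two decoupled JKO problems. Where you genuinely diverge is the convergence proof. The paper redoes the JKO machinery: a single-step estimate, a priori bounds on second moments and on the telescoped sum of squared Wasserstein distances (non-trivial here because mass migrates from normal to dark matter, so the entropies do not telescope and one needs entropy-splitting inequalities plus the explicit mass computation $|\rho_N^{h,k}|=r^k$, $|\rho_D^{h,k}|=1-r^k$), weak $L^1$ compactness, and then push-forward perturbations of the minimisers yielding discrete Euler--Lagrange equations that pass to the weak formulation of~\eqref{def:diff dec system}. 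You instead exploit the specific initial datum $(\rho^0,0)$: since the JKO map $T_h$ is positively $1$-homogeneous (correct, because $\S(c\mu)=c\,\S(\mu)+c|\mu|\log c$ contributes only a constant and $d^2(c\mu,c\nu)=c\,d^2(\mu,\nu)$), the normal and dark components stay exactly proportional, giving the closed form $\rho_N^{h,k}=e^{-\lambda kh}T_h^k\rho^0$, $\rho_D^{h,k}=(1-e^{-\lambda kh})T_h^k\rho^0$, and the theorem then follows from Theorem~\ref{theo: JKO} together with an elementary ``uniformly convergent scalar factor times weakly convergent densities'' lemma; your induction and the verification that $(e^{-\lambda t}u,(1-e^{-\lambda t})u)$ solves the system check out. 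This is a markedly shorter proof of the theorem as stated. What the paper's heavier route buys is robustness: its compactness/Euler--Lagrange argument never uses the proportionality of $\rho_N$ and $\rho_D$, so it works for general admissible initial pairs and, more importantly, it is the argument that extends to the drift functional~\eqref{def:KFPDec} and to the reaction systems~\eqref{eq:diff reaction}, where several species interconvert and no closed form of your type exists (your trick would in fact survive the addition of a drift, the Fokker--Planck JKO map being likewise homogeneous, but not general reactions). The small verifications you flag --- the splitting of the constraint, $\inf_{\rho\NN}\K^h\Dec(\rho\NN|\sigma)=0$ including where the density of $\sigma$ vanishes, uniqueness of the assembled minimiser, and the degenerate first step where $\rho_D^{h,0}=0$ forces $\rho\DD^{h,1}=0$ --- are indeed routine.
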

The proof of this theorem is based on \cite{JKO1998}, and can  easily be extended to an additional drift term (see Section~\ref{sec:extensions}). Note that when we let $\lambda\to 0$ then $|\rho\ND|$ should vanish in \eqref{def:diff dec gradient flow} to prevent blow-up; indeed, in that case 
\begin{equation*}
  \K^h\DDec(\rho\NN,0,\rho\DD|\rho_N^{k-1},\rho_D^{k-1}) = \K^h\Diff(\rho\NN|\rho_N^{k-1}) + \K^h\Diff(\rho\DD|\rho_D^{k-1}).
\end{equation*}



\medskip

\begin{rem} 
\label{contr:ignore-dark-matter}
		A further contraction can be used to ignore the dark matter. We can then ignore the initial dark matter as well, so that the sequence $\tfrac1n \sum_{i=1\;:\;\nu^h_i=N}^n \delta_{Y^h_i}$ satisfies a large-deviation principle with rate $n$ and rate functional
					\begin{equation*}
						\rho_N \mapsto \inf_{\substack{0\leq \init\rho\NN\leq\init\rho_N \\ |\init\rho\NN|=|\rho_N|}}\; \inf_{q \in \Gamma(\init\rho\NN,\rho_N)} \mathcal{H}(q\NN|\init\rho\NN r\NN^h\theta^h).
					\end{equation*}
The corresponding energy-dissipation functional is then:
\begin{multline}
\label{eq: diff dec gradient flow normal matter}
\Kcontracted(\rho_N|\init\rho_N) :=  \inf_{\rho\ND : |\rho_N+\rho\ND|=|\init\rho_N|} 
	  -\tfrac12 \S(\rho_N+\rho\ND) -\tfrac12 \S(\init\rho_N) 
	  + \tfrac{1}{4h} d^2(\init\rho_N,\rho_N+\rho\ND)\\ 
+ \S(\rho_N) + \S(\rho\ND) -|\rho_N|\,\log r\NN^h - |\rho\ND|\, \log r\ND^h,
\end{multline}
which matches the minimisation problem~\eqref{min:intro-rhok-DD}.
The corresponding version of Theorem~\ref{theo: diff dec gradient flow} is
\begin{theo}
\label{theo:GF-no-dark-matter}
Let  $\rho^0 \in \P^a_2(\R^d)$ and define the sequence $\{\rho_N^{h,k}\}_{k \geq 0}$ by $\rho_N^{h,0} = \rho^0$ and for $k \geq 1$
\[
\rho_N^{h,k} \in \argmin_{\rho\in \M^+(\R^d)} \Kcontracted(\rho|\rho^{h,k-1}_N).
\]
These minimisers exist uniquely, and as $h\to 0$ the function $\rho_N^{h,\lfloor t/h \rfloor}$ converges weakly in $L^1(\R^d\times(0,T))$ to the solution of \eqref{eq: diffusion decay} with initial condition $\rho^0$.
\end{theo}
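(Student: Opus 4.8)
\noindent\textbf{Proof strategy for Theorem~\ref{theo:GF-no-dark-matter}.}
The plan is to deduce the statement from Theorem~\ref{theo: diff dec gradient flow} by showing that the full scheme~\eqref{eq: diff dec approximation scheme} decouples in such a way that its normal-matter iterate $\rho\NN^{h,k}=\rho_N^{h,k}$ coincides with the iterate produced by the contracted scheme built on $\Kcontracted$. Once that identification is in place, the convergence as $h\to0$ is immediate from Theorem~\ref{theo: diff dec gradient flow}, and only the well-posedness of the contracted scheme needs a separate (short) argument.

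The starting point is the structure of $\K^h\DDec$ in~\eqref{def:diff dec gradient flow}: the block $\tfrac12\S(\rho\DD)-\tfrac12\S(\init\rho_D)+\tfrac1{4h}d^2(\init\rho_D,\rho\DD)=\K^h\Diff(\rho\DD|\init\rho_D)$ involves only $\rho\DD$, every other term involves only $(\rho\NN,\rho\ND)$, and the admissibility constraint of $B(\init\rho_N,\init\rho_D)$ splits into $|\rho\NN+\rho\ND|=|\init\rho_N|$ and $|\rho\DD|=|\init\rho_D|$ independently. Hence the minimiser of $\K^h\DDec(\cdot|\init\rho_N,\init\rho_D)$ has $\rho\DD$-component equal to the JKO minimiser of $\K^h\Diff(\cdot|\init\rho_D)$, while its $(\rho\NN,\rho\ND)$-component minimises the remaining terms; carrying out the minimisation over $\rho\ND$ first turns those remaining terms into exactly $\Kcontracted(\rho\NN|\init\rho_N)$ as written in~\eqref{eq: diff dec gradient flow normal matter}, so the $\rho\NN$-component minimises $\Kcontracted(\cdot|\init\rho_N)$. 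Applying this at every step of~\eqref{eq: diff dec approximation scheme} with initial data $(\rho^0,0)$ and inducting — using that $\rho_N^{h,k}=\rho\NN^{h,k}$ becomes the first argument of $\Kcontracted$ in step $k+1$ — identifies $\{\rho_N^{h,k}\}_k$ with the sequence of Theorem~\ref{theo:GF-no-dark-matter}. Existence of the contracted minimiser follows, it being the $\rho\NN$-component of the unique full minimiser; writing $\Kcontracted(\cdot|\init\rho_N)=\inf_{\sigma:\,|\sigma|=|\init\rho_N|}[\K^h\Diff(\sigma|\init\rho_N)+\K^h\Dec(\cdot|\sigma)]$ and using that $\K^h\Dec(\rho|\sigma)\ge0$ with unique minimiser $\rho=e^{-\lambda h}\sigma$ (the integrated form of Theorem~\ref{theo:simple-x}), this minimiser equals $e^{-\lambda h}\sigma^\ast$ with $\sigma^\ast$ the minimiser of $\K^h\Diff(\cdot|\init\rho_N)$ from Theorem~\ref{theo: JKO}.

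For the convergence I then invoke Theorem~\ref{theo: diff dec gradient flow}: $(\rho_N^{h,\lfloor t/h\rfloor},\rho_D^{h,\lfloor t/h\rfloor})$ converges weakly in $L^1(\R^d\times(0,T))^2$ to the solution $(u_N,u_D)$ of~\eqref{def:diff dec system} with initial datum $(\rho^0,0)$; since the first equation of~\eqref{def:diff dec system} is $\partial_t u_N=\lapl u_N-\lambda u_N$, which is~\eqref{eq: diffusion decay}, and $u_N(\cdot,0)=\rho^0$, the marginal $\rho_N^{h,\lfloor t/h\rfloor}$ converges weakly in $L^1(\R^d\times(0,T))$ to the solution of~\eqref{eq: diffusion decay} with initial condition $\rho^0$, as claimed. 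The same conclusion can alternatively be reached directly from Theorem~\ref{theo: JKO}: unfolding the iteration with the decoupling just described gives $\rho_N^{h,k}=e^{-\lambda kh}\mu^{h,k}$, where $\{\mu^{h,k}\}_k$ is the JKO sequence for the heat equation started at $\rho^0$ (this uses the elementary rescalings $d^2(c\mu,c\nu)=c\,d^2(\mu,\nu)$ and $\S(c\mu)=c\S(\mu)+c|\mu|\log c$ to apply Theorem~\ref{theo: JKO} to the non-probability measures of mass $e^{-\lambda kh}$), and one finishes with Theorem~\ref{theo: JKO}, the uniform limit $e^{-\lambda\lfloor t/h\rfloor h}\to e^{-\lambda t}$ on $[0,T]$, and the fact that $e^{-\lambda t}v(t,\cdot)$ solves~\eqref{eq: diffusion decay} whenever $v$ solves $\partial_t v=\lapl v$ with $v(\cdot,0)=\rho^0$.

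The step requiring the most care is the uniqueness of the contracted minimiser, which does not follow from the uniqueness in Theorem~\ref{theo: diff dec gradient flow} by pure bookkeeping because of the infimum hidden inside $\Kcontracted$. To close it one argues: if $\rho_N$ also attains $\min\Kcontracted(\cdot|\init\rho_N)=\K^h\Diff(\sigma^\ast|\init\rho_N)$, take $\sigma_j$ with $\K^h\Diff(\sigma_j|\init\rho_N)+\K^h\Dec(\rho_N|\sigma_j)\to\K^h\Diff(\sigma^\ast|\init\rho_N)$; since both summands exceed their minima $\K^h\Diff(\sigma^\ast|\init\rho_N)$ and $0$, each converges to its minimum, so the strong convexity of the JKO functional forces $\sigma_j\to\sigma^\ast$ in Wasserstein distance and (via Lemma~\ref{theo: strong entropy convergence}) in entropy, whence lower semicontinuity of $\S$ yields $0=\lim\K^h\Dec(\rho_N|\sigma_j)\ge\K^h\Dec(\rho_N|\sigma^\ast)\ge0$ and therefore $\rho_N=e^{-\lambda h}\sigma^\ast$. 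Everything else — compactness, lower semicontinuity, and passage to the limit — is already supplied by Theorems~\ref{theo: JKO} and~\ref{theo: diff dec gradient flow}.
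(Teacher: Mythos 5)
Your main route is the one the paper itself intends: Theorem~\ref{theo:GF-no-dark-matter} appears in Remark~\ref{contr:ignore-dark-matter} without a separate proof, as the contraction of Theorem~\ref{theo: diff dec gradient flow}, and your decoupling of $\K^h\DDec$ in~\eqref{def:diff dec gradient flow} (the $\rho\DD$-block is a free-standing JKO step, while the $(\rho\NN,\rho\ND)$-block, infimised over $\rho\ND$, is exactly $\Kcontracted$ of~\eqref{eq: diff dec gradient flow normal matter}), together with the induction in $k$, is precisely the identification that justifies this, consistent with~\eqref{eq:values-of-rhonn}. What you add goes beyond what the paper spells out and is worthwhile: (i) uniqueness of the contracted minimiser, which the uniqueness of the full minimiser does not deliver by pure bookkeeping because of the hidden infimum over $\rho\ND$; your argument via $\Kcontracted(\rho|\init\rho_N)=\inf_\sigma\bigl[\K^h\Diff(\sigma|\init\rho_N)+\K^h\Dec(\rho|\sigma)\bigr]$ and $\K^h\Dec\geq 0$ with equality only at $\rho=r\NN^h\sigma$ is sound, except that ``strong convexity forces $\sigma_j\to\sigma^*$'' should be replaced by the direct-method ingredients (tightness from the moment term, narrow lower semicontinuity of $\S$ and $d^2$, uniqueness of the JKO minimiser); convexity alone does not make minimising sequences converge, and Lemma~\ref{theo: strong entropy convergence} uses, rather than produces, the convergence $\S(\sigma_j)\to\S(\sigma^*)$, which instead follows from the two liminf inequalities plus convergence of the sum; (ii) the alternative direct route $\rho_N^{h,k}=e^{-\lambda kh}\mu^{h,k}$, with $\mu^{h,k}$ the JKO sequence for the heat equation, using $d^2(c\mu,c\nu)=c\,d^2(\mu,\nu)$ and $\S(c\mu)=c\S(\mu)+c|\mu|\log c$; this bypasses Theorem~\ref{theo: diff dec gradient flow} entirely, reduces the statement to Theorem~\ref{theo: JKO}, and matches the spirit of the paper's final Remark on restricting to mass $r\NN^h|\init\rho_N|$. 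Neither point is a gap; they are refinements of steps the paper leaves implicit.
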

\end{rem}

\begin{rem} If we restrict ourselves to measures of mass $|\rho_N| = r^h\NN |\init\rho_N|$, thereby excluding the possible fluctuation in the decay process, then \eqref{eq: diff dec gradient flow normal matter} further reduces to
					\begin{equation*}
						\rho_N \mapsto \tfrac12 \S\!\left(\tfrac{1}{r\NN^h} \rho_N\right) - \tfrac12 \S(\init\rho_N) + \tfrac{1}{4h} d^2\!\left(\init\rho_N,\tfrac{1}{r\NN^h} \rho_N\right).
					\end{equation*}
          A similar scheme to deal with decaying mass can be found in \cite{KinderlehrerWalkington1999}.
\end{rem}

\subsection{Proof of Theorem~\ref{theo: diff dec gamma convergence}}


To reduce clutter we abbreviate $\rho\NT:=\rho\NN+\rho\ND$ and $q\NT:=q\NN+q\ND$. The sum over $\mu\nu=N\!N,N\!D$ in $\J^h\DDec$ can be rewritten as:
\begin{equation}
\label{eq: diffusion decay rate functional reformulation}
\begin{split}
	&\inf_{\init\rho\NN + \init\rho\ND = \init\rho_N} \sum_{\nu=N,D} \inf_{q_{N\nu}\in\Gamma(\init\rho_{N\nu}\!,\rho_{N\nu})}\mathcal{H}\!\left(q_{N\nu}\vert \init\rho_N r^h_{N\nu} \theta^h\right) \\
	&\quad = \inf_{\init\rho\NN + \init\rho\ND = \init\rho_N} \sum_\nu \inf_{q_{N\nu}\in\Gamma(\init\rho_{N\nu}\!,\rho_{N\nu})}\iint\!\log\!\left(    \frac{dq\NT}{d\init\rho_N\theta^h} \cdot     \frac{d\rho_{N\nu}}{d\rho\NT}    \cdot      \frac1{r^h_{N\nu}}     \cdot      \frac{dq_{N\nu}}{\frac{d\rho_{N\nu}}{d\rho\NT} dq\NT}\right)q_{N\nu} \\ 
	&\quad = \inf_{q\NT \in \Gamma(\init\rho_N,\rho\NT)} \mathcal{H}\!\left(q\NT \vert \init\rho_N\theta^h\right) + \S(\rho\NN) + \S(\rho\ND) - \S(\rho\NT) \\
	&\qquad - |\rho\NN|\,\log r^h\NN - | \rho\ND|\,\log r^h\ND + \inf_{\init\rho\NN + \init\rho\ND = \init\rho_N} \inf_{\substack{ q\NN+q\ND = q\NT \\ q\NN \in \Gamma(\init\rho\NN,\rho\NN)}} \sum_\nu \mathcal{H}\!\left(q_{N\nu} \Big\vert \frac{d\rho_{N\nu}}{d\rho\NT} q\NT\right).
\end{split}
\end{equation}
We now show that the last sum vanishes under the infima. Since $|q_{N\nu}| = |\rho_{N\nu}| = |\tfrac{d\rho_{N\nu}}{d\rho\NT} q\NT|$, we can apply Gibbs' inequality for $\nu=N,D$:
\begin{equation*}
		\mathcal{H}\!\left(q_{N\nu} \Big\vert \frac{d\rho_{N\nu}}{d\rho\NT} q\NT\right) \geq 0.
\end{equation*}
On the other hand, for any given $q\NT$, the measures
\begin{align*}
	\tilde q\NN := \frac{d\rho\NN}{d\rho\NT} q\NT, 		&& \tilde q\ND := \frac{d\rho\ND}{d\rho\NT} q\NT
\end{align*}
and their first marginals $\init\rho\NN(\cdot) = \tilde q\NN (\cdot \times \R^d)$ and $\init\rho\ND(\cdot) = \tilde q\ND (\cdot \times \R^d)$ are admissible in the infima. It follows that
\begin{equation}
\begin{split}
	\inf_{\init\rho\NN+\init\rho\ND=\init\rho_N} \inf_{\substack{ q\NN+q\ND = q\NT \\ q\NN \in \Gamma(\init\rho\NN,\rho\NN)}} \sum_\nu \mathcal{H}\!\left(q_{N\nu} \Big\vert \frac{d\rho_{N\nu}}{d\rho\NT} q\NT\right) 
																				&\leq \sum_\nu \mathcal{H}\!\left(\tilde q_{N\nu} \Big\vert \frac{d\rho_{N\nu}}{d\rho\NT} q\NT\right)=0.
\end{split}
\end{equation}
Hence we can write:
\begin{multline}
\label{eq:JhDec-rewrite}
	\J^h\DDec(\rho\NN,\rho\ND,\rho\DD|\init\rho_N,\init\rho_D) = \inf_{q\NT \in \Gamma(\init\rho_N,\rho\NT)} \mathcal{H}\!\left(q\NT \vert \init\rho_N\theta^h\right) +  \mathcal{H}\!\left(q\DD\vert \init\rho_D \theta^h\right) \\ + \S(\rho\NN) + \S(\rho\ND) - \S(\rho\NT) - |\rho\NN|\,\log r^h\NN - | \rho\ND|\,\log r^h\ND.
\end{multline}

Fix a $(\init\rho_N,\init\rho_D) \in B^0$. We first prove the lower bound of the Mosco convergence, and then the existence of a recovery sequence.

\emph{Lower Bound}. Take any narrowly convergent sequence
\begin{equation*}
  (\rho^h\NN,\rho^h\ND,\rho^h\DD) \weakto (\rho\NN,\rho\ND,\rho\DD) \qquad \text{in } B(\init\rho_N,\init\rho_D).
\end{equation*}
Again, we write $\rho^h\NT=\rho^h\NN+\rho^h\ND$. Combining~\eqref{def:diff dec rate functional}, \eqref{eq: diffusion splitting gamma convergence}, and~\eqref{eq:JhDec-rewrite}, we need to prove that:
\begin{equation}
\label{eq: diff dec gamma convergence lb}
\begin{split}
	&\liminf_{h\to 0} \inf_{q\NT \in \Gamma(\init\rho_N,\rho^h\NT)} \mathcal{H}\!\left(q\NT \vert \init\rho_N\theta^h\right) - \frac{1}{4h}d^2(\init\rho_N,\rho^h\NT)\\
	&\qquad + \inf_{q\DD \in \Gamma(\init\rho_D,\rho^h\DD)} \mathcal{H}(q\DD|\init\rho_D \theta^h) - \tfrac{1}{4h}d^2(\init\rho_D,\rho^h\DD) + \S(\rho^h\NN) + \S(\rho^h\ND) - \S(\rho^h\NT)\\
	&\quad\geq -\tfrac12 \S(\rho\NT) - \tfrac12 \S(\init\rho_N) + \tfrac12 \S(\rho\DD) - \tfrac12 \S(\init\rho_D)+ \S(\rho\NN) + \S(\rho\ND) .
\end{split}
\end{equation}
We will prove the lower bound for a number of terms separately.
\begin{itemize}
\item By assumption, $|\init\rho_N|^{-1} \rho\NT$ lies in $\P_2^\S(\R^d)$. If Conjecture~\ref{conj} is true for probability measures, it also holds for measures of different mass, so that:
	\begin{equation}
	\label{eq: diff dec gamma convergence lb ADPZ in N}
		\liminf_{h\to 0} \inf_{q\NT \in \Gamma(\init\rho_N,\rho^h\NT)} \mathcal{H}\!\left(q\NT \vert \init\rho_N\theta^h\right) - \frac{1}{4h}d^2(\init\rho_N,\rho^h\NT) \geq \tfrac12 \S(\rho\NT)-\tfrac12 \S(\init\rho_N).
	\end{equation}
	Similarly, $|\init\rho_D|^{-1}\rho\DD \in \P_2^\S(\R^d)$ and so:
	\begin{equation}
	\label{eq: diff dec gamma convergence lb ADPZ in D}
		\liminf_{h\to 0} \inf_{q\DD \in \Gamma(\init\rho_D,\rho^h\DD)} \mathcal{H}(q\DD|\init\rho_D \theta^h) - \tfrac{1}{4h}d^2(\init\rho_D,\rho^h\DD) \geq \tfrac12 \S(\rho\DD) - \tfrac12 \S(\init\rho_D).
	\end{equation}
\item Since the function $(x,y) \mapsto x\log x + y\log y - (x+y)\log(x+y)$ is convex, the functional
	\begin{equation*}
		F:(\rho\NN,\rho\ND) \mapsto \S(\rho\NN) + \S(\rho\ND) - \S(\rho\NN+\rho\ND)
	\end{equation*}
is also convex, and lower semicontinuous in $B(\init\rho_N,\init\rho_D)$ with the narrow topology \cite[Th. 4.3]{Giusti2003}
	\begin{equation}
	\label{eq: diff dec gamma convergence lb wlsc}
		\liminf_{h\to 0} \S(\rho^h\NN) + \S(\rho^h\ND) - \S(\rho^h\NT) \geq \S(\rho\NN) + \S(\rho\ND) - \S(\rho\NT).
	\end{equation}
\end{itemize}
The required lower bound \eqref{eq: diff dec gamma convergence lb} then follows from \eqref{eq: diff dec gamma convergence lb ADPZ in N}, \eqref{eq: diff dec gamma convergence lb ADPZ in D} and \eqref{eq: diff dec gamma convergence lb wlsc}.

\medskip

\emph{Recovery Sequence}. Fix $(\rho\NN, \rho\ND,\rho\DD) \in B(\init\rho_N,\init\rho_D)$ and take two recovery sequences $\rho^h\DD \to \rho\DD$ and $\rho^h\NT \to \rho\NN + \rho\ND$ in the strong topology from Conjecture~\ref{conj} such that
\begin{align}
\label{eq: diff dec gamma convergence rs ADPZ DD}
	&\limsup_{h\to 0} \inf_{q\DD \in \Gamma(\init\rho_D,\rho^h\DD)} \mathcal{H}(q\DD|\init\rho_D \theta^h) - \frac{1}{4h}d^2(\init\rho_D,\rho^h\DD) = \tfrac12 \S(\rho\DD) -\tfrac12 \S(\init\rho_D),\\
\label{eq: diff dec gamma convergence rs ADPZ N}
	&\limsup_{h\to0} \inf_{q\NT \in \Gamma(\init\rho_N,\rho^h\NT)} \mathcal{H}\!\left(q\NT \vert \init\rho_N\theta^h\right) - \frac{1}{4h}d^2(\init\rho_N,\rho^h\NT) = \tfrac12 \S(\rho\NN + \rho\ND) - \tfrac12 \S(\init\rho_N).
\end{align}
Contrary to the case of the lower bound we define $\rho^h\NN$ and $\rho^h\ND$ in terms of $\rho^h\NT$:
\begin{align*}
	\rho\NN^h := \frac{d\rho\NN}{d(\rho\NN+\rho\ND)}\rho^h\NT && \rho^h\ND := \frac{d\rho\ND}{d(\rho\NN+\rho\ND)}\rho^h\NT.
\end{align*}
Here we define the Radon-Nikodym derivatives to be $1$ on null sets of $\rho\NN+\rho\ND$. Observe that by definition of the strong topology $\S(\rho^h\NT)\to S(\rho\NN+\rho\ND)$. By Lemma~\ref{theo: strong entropy convergence}, this implies that $\rho^h\NT\to\rho\NN+\rho\ND$ and $\rho^h\NT\log\rho^h\NT\to(\rho\NN+\rho\ND)\log(\rho\NN+\rho\ND)$ strongly in $L^1({\R^d})$, if we redefine the sequence by its convergent subsequence. Therefore, with $0\leq\alpha(x):=\frac{d\rho\NN}{d(\rho\NN+\rho\ND)}(x)\leq 1$
\begin{equation*}
\begin{split}
  |\S(\rho^h\NN)-\S(\rho\NN)|&=\left|\int\!\alpha\rho^h\NT\log\alpha\rho^h\NT-\int\!\alpha\cdot(\rho\NN+\rho\ND)\log\alpha\cdot(\rho\NN+\rho\ND)\right|\\
    &\leq\left| \int\!\alpha\rho^h\NT\log\rho^h\NT-\int\!\alpha(\rho\NN+\rho\ND)\log(\rho\NN+\rho\ND)\right|\\
    &\qquad+\left|\int\!\rho^h\NT\alpha\log\alpha-\int\!(\rho\NN+\rho\ND)\alpha\log\alpha\right|\\
    &\leq \int\!\left|\rho^h\NT\log\rho^h\NT-(\rho\NN+\rho\ND)\log(\rho\NN+\rho\ND)\right|\\
    &\qquad + \frac1e\int\!\left|\rho^h\NT-(\rho\NN-\rho\ND)\right|\\
    &\to0,
\end{split}
\end{equation*}
and analogously for $\rho^h\ND$. Collecting the convergence results:
\begin{align}
\label{eq: diff dec gamma convergence rs entropies}
	&\S(\rho^h\NN) \to \S(\rho\NN), & \S(\rho^h\ND) \to \S(\rho\ND) && \text{and } && \S(\rho^h\NT) \to \S(\rho\NN+\rho\ND).
\end{align}
Then it follows from \eqref{eq: diff dec gamma convergence rs ADPZ DD}, \eqref{eq: diff dec gamma convergence rs ADPZ N}, and \eqref{eq: diff dec gamma convergence rs entropies} that $(\rho^h\NN,\rho^h\ND,\rho^h\DD)$ is a recovery sequence, ie.
\begin{equation*}
\begin{split}
	&\limsup_{h\to 0} \inf_{q\NT \in \Gamma(\init\rho_N,\rho^h\NT)} \mathcal{H}\!\left(q\NT \vert \init\rho_N\theta^h\right) - \frac{1}{4h}d^2(\init\rho_N,\rho^h\NT)\\
	&\qquad + \inf_{q\DD \in \Gamma(\init\rho_D,\rho^h\DD)} \mathcal{H}(q\DD|\init\rho_D \theta^h) - \frac{1}{4h}d^2(\init\rho_D,\rho^h\DD) + \S(\rho^h\NN) + \S(\rho^h\ND) - \S(\rho^h\NT) \\
	&\leq -\tfrac12 \S(\rho\NN+\rho\ND) - \tfrac12 \S(\init\rho_N) + \tfrac12 \S(\rho\DD) - \tfrac12 \S(\init\rho_D)+ \S(\rho\NN) + \S(\rho\ND) .
\end{split}
\end{equation*}
This concludes the proof of Theorem~\ref{theo: diff dec gamma convergence}.

\subsection{Proof of Theorem~\ref{theo: diff dec gradient flow}}

Theorem~\ref{theo: diff dec gradient flow} contains two main results: existence and uniqueness of minimisers, and the convergence of time-discrete solutions. We first discuss the existence and uniqueness of minimisers. By slightly rewriting~\eqref{eq: diff dec approximation scheme} we can minimise, for fixed $(\rho_N^{h,k-1},\rho_D^{h,k-1}) \in \P_2^a(\R^d)$, the functional
\begin{align}
	(\rho\NN,\rho\NT,\rho\DD) &\mapsto \K^h\DDec(\rho\NN,\rho\NT-\rho\NN,\rho\DD| \rho_N^{h,k-1},\rho_D^{h,k-1}) \notag\\
=&{} -\tfrac12 \S(\rho\NT) -\tfrac12 \S(\rho_N^{h,k-1}) + \tfrac{1}{4h}d^2(\rho_N^{h,k-1},\rho\NT)\notag\\
&{} + \tfrac12 \S(\rho\DD) -\tfrac12 \S(\rho_D^{h,k-1}) + \tfrac{1}{4h}d^2(\rho_D^{h,k-1},\rho\DD) \notag\\
& + \S(\rho\NN) + \S(\rho\NT-\rho\NN) - |\rho\NN|\log r\NN^h - |\rho\NT-\rho\NN|\log r\ND^h.
\label{eq: KDec to minimise}
\end{align}
The negative sign of the term $-\tfrac12 \S(\rho\NT)$ makes this minimisation problem slightly non-trivial. We therefore proceed in steps. 
For fixed $\rho\NT$, the functional 
\begin{equation*}
	F^h(\rho\NN) := \S(\rho\NN)+\S(\rho\NT-\rho\NN) - |\rho\NN|\,\log r\NN^h - |\rho\NT-\rho\NN|\,\log r\ND^h
\end{equation*}
is convex and has a unique stationary point that satisfies
\begin{equation*}
	0 = \log \rho\NN - \log (\rho\NT-\rho\NN) - \log r\NN^h + \log r\ND^h,
\end{equation*}
implying that $\rho\NN := r^h\NN \rho\NT$ is the unique global minimiser of $F$. Therefore, at every step $k$, we have (see Figure~\ref{fig: split measures})
\begin{equation}
\label{eq:values-of-rhonn}
\rho_N^{h,k} = \rho\NN^{h,k} = r^h\NN \rho\NT^{h,k} \quad\text{and}\quad
\quad\rho\ND^{h,k} = r^h\ND \rho\NT^{h,k}.
\end{equation}

The problem of minimising \eqref{eq: KDec to minimise} can now be reduced to the minimisation of
\begin{align}
	(\rho\NT,\rho\DD) \mapsto &\K^h\DDec(r\NN^h \rho\NT,r\ND^h\rho\NT,\rho\DD| \rho_N^{h,k-1},\rho_D^{h,k-1}) \notag\\
														&= \tfrac12 \S(\rho\NT) - \tfrac12 \S(\rho_N^{h,k-1}) + \tfrac{1}{4h}d^2(\rho_N^{h,k-1},\rho\NT)\notag\\
														&\quad + \tfrac12 \S(\rho\DD) -\tfrac12 \S(\rho_D^{h,k-1}) + \tfrac{1}{4h}d^2(\rho_D^{h,k-1},\rho\DD),
\label{eq: diff dec simplified scheme}
\end{align}
which consists of two decoupled minimisation problems, for which existence and uniqueness of minimisers are proved in \cite[Prop.~4.1]{JKO1998}. 

\bigskip

The compactness of the sequence $(\rho_N^{h,\lfloor t/h\rfloor},\rho_D^{h,\lfloor t/h\rfloor})$ is based on the same principle as in~\cite{JKO1998}, but with a twist. The central observation is again that $(\rho_N^{h,k-1},\rho_D^{h,k-1})$ is admissible in~\eqref{eq: diff dec simplified scheme}, leading to the estimate
\begin{equation}
\label{est:single-step}
\tfrac1{2h} d^2(\rho_N^{h,k-1},\rho\NT^{h,k}) + \tfrac{1}{2h}d^2(\rho_D^{h,k-1},\rho\DD^{h,k}) \leq 
- \S(\rho\NT^{h,k}) + \S(\rho_N^{h,k-1}) - \S(\rho\DD^{h,k}) + \S(\rho_D^{h,k-1}).
\end{equation}
However, the migration of mass from normal to dark matter means that upon summing this estimate over $k$, terms in the right-hand side do not cancel. 
Below we establish the \emph{a priori} estimates
\begin{align}
\label{est:aprioriM2}
&M_2(\rho_N^{h,k}+\rho_D^{h,k}) := \int\! |x|^2 d(\rho_N^{h,k}+ \rho_D^{h,k}) \leq C\\
&\sum_{k=1}^{\lfloor T/h\rfloor} d^2(\rho_N^{h,k-1},\rho\NT^{h,k}) + d^2(\rho_D^{h,k-1},\rho\DD^{h,k}) \leq Ch,
\label{est:telescope}
\end{align}
where the constant $C$ only depends on the initial data and on the maximal time~$T$.
As in~\cite{JKO1998} these provide the appropriate tightness in space (by~\eqref{est:aprioriM2}) and continuity in time (by~\eqref{est:telescope}) to conclude that there exists a subsequence such that $(\rho_N^{h,\lfloor t\slash h\rfloor},\rho_D^{h,\lfloor t\slash h\rfloor}) \to (u_N,u_D)$, weakly in $L^1(\R^d\times(0,T))\times L^1(\R^d\times(0,T))$.

We now prove~\eqref{est:aprioriM2} and~\eqref{est:telescope}. Recall from~\cite{JKO1998} the estimates
\begin{alignat}2
\label{eq: entropy lower bound}
	 -\S(\rho)  &\leq C\left( M_2(\rho) + 1 \right)^\alpha	&\qquad&\text{for some }0<\alpha<1\text{ and for all } \rho \in \mathcal{M}^+(\R^d),\\
	 M_2(\rho_1) &\leq 2 M_2(\rho_0) + 2 d^2(\rho_0,\rho_1) 																&\qquad&\text{for all } \rho_0,\rho_1 \in \mathcal{M}^+(\R^d) \text{ with } |\rho_0| = |\rho_1|.\notag
\end{alignat}
This allows us to estimate, for $n\in \N$ such that $nh\leq T$, 
\begin{equation}
\label{est:M2-applied}
M_2(\rho_N^{h,n}+\rho_D^{h,n}) \leq 2M_2(\rho_N^0+\rho_D^0) + 2d^2(\rho_N^{h,n}+\rho_D^{h,n},\rho_N^0+\rho_D^0).
\end{equation}
The second term above we then estimate by
\begin{eqnarray}
d^2(\rho_N^{h,n}+\rho_D^{h,n},\rho_N^0+\rho_D^0) 
&\leq& \biggl[\sum_{k=1}^n d(\rho_N^{h,k}+\rho_D^{h,k},\rho_N^{h,k-1}+\rho_D^{h,k-1})\biggr]^2\notag\\
&\leq& n \sum_{k=1}^n d^2(\rho_N^{h,k}+\rho_D^{h,k},\rho_N^{h,k-1}+\rho_D^{h,k-1})\notag\\
&=& n \sum_{k=1}^n d^2(\rho\NT^{h,k}+\rho\DD^{h,k},\rho_N^{h,k-1}+\rho_D^{h,k-1})\notag\\
&\stackrel{\eqref{ineq:Wasserstein-sum}
}\leq &n \sum_{k=1}^n d^2(\rho\NT^{h,k},\rho_N^{h,k-1}) + d^2(\rho\DD^{h,k},\rho_D^{h,k-1}).
\label{est:M2}
\end{eqnarray}
We also observe some properties of $\S$:
\begin{multline*}
\S(\alpha\rho+\beta\rho) = \S(\alpha\rho) + \S(\beta\rho) - \alpha|\rho|\log\frac \alpha{\alpha+\beta} - \beta|\rho|\frac\beta{\alpha+\beta}, \\
\text{for all $\alpha,\beta>0$ and $\rho\in \mathcal M^+(\R^d)$},
\end{multline*}
and in general
\begin{multline*}
\S(\rho_1 + \rho_2) \leq \S(\rho_1) + \S(\rho_2) - |\rho_1| \log \frac{ |\rho_1|}{|\rho_1+\rho_2|} - |\rho_2| \log \frac{ |\rho_2|}{|\rho_1+\rho_2|}\\
\text{for any $\rho_1,\rho_2 \in \mathcal{M}^+(\R^d)$.}
\end{multline*}
The first follows from simple calculation, and the second can be proved by writing $\rho_1+\rho_2 = \lambda (\rho_1/\lambda) + (1-\lambda) (\rho_2/(1-\lambda))$, applying the convexity of $\S$, and optimising with respect to~$\lambda$.
Combining these with~\eqref{eq:values-of-rhonn} we then have 
\begin{align}
&\S(\rho^{h,k}\NT) = \S(\rho^{h,k}\NN) +\S(\rho^{h,k}\ND)
  - |\rho^{h,k}\NN| \log r^h\NN - |\rho^{h,k}\ND| \log r^h\ND,\quad\text{and}
  \label{eq:split-rhont}\\
&\S(\rho^{h,k}_D) \leq \S(\rho^{h,k}\ND) + \S(\rho^{h,k}\DD) 
  - |\rho^{h,k}\ND | \log \frac{|\rho^{h,k}\ND|}{|\rho^{h,k}_D|}
  - |\rho^{h,k}\DD | \log \frac{|\rho^{h,k}\DD|}{|\rho^{h,k}_D|}.
  \label{est:split-rhod}
\end{align}

Now, putting the ingredients together:
\begin{eqnarray*}
&&\hskip-2cm \lefteqn{M_2(\rho_N^{h,n}+\rho_D^{h,n})\stackrel{\eqref{est:M2-applied}}\leq 2M_2(\rho_N^0+\rho_D^0) + 2d^2(\rho_N^{h,n}+\rho_D^{h,n},\rho_N^0+\rho_D^0)}\\
	&\stackrel{\eqref{est:M2}}\leq&
	  C + 2n\sum_{k=1}^{n} d^2(\rho_N^{h,k-1},\rho\NT^{h,k}) + d^2(\rho_D^{h,k-1},\rho\DD^{h,k}) \\
	&\stackrel{\eqref{est:single-step}}\leq& C+4nh\sum_{k=1}^n \S(\rho_N^{h,k-1})-\S(\rho\NT^{h,k})  + \S(\rho_D^{h,k-1}) - \S(\rho\DD^{h,k}) \\
	&\stackrel{\eqref{eq:split-rhont},\eqref{est:split-rhod}}\leq&C+ 4T\sum_{k=1}^n \S(\rho_N^{h,k-1}) - \S(\rho_N^{h,k}) + \S(\rho_D^{h,k-1}) - \S(\rho_D^{h,k}) \\
	&&\quad {}+ 4T\underbrace{\sum_{k=1}^n |\rho\NN^{h,k}|\log r\NN^h + |\rho^{h,k}\ND| \log r\ND^h  - |\rho\ND^{h,k}| \log \frac{|\rho\ND^{h,k}|}{|\rho_D^{h,k}|} - |\rho\DD^{h,k}| \log \frac{|\rho\DD^{h,k}|}{|\rho_D^{h,k}|}}_{{}\leq 0 \text{ (see below)}} \\
	&\stackrel{\eqref{eq: entropy lower bound}}\leq& C+ 4T\Bigl[\S(\rho_N^0) + \S(\rho_D^0) +C(M_2(\rho_N^{h,n})+1)^\alpha + C(M_2(\rho_D^{h,n})+1)^\alpha\Bigr] \\
	&\leq& C+ 4T\Bigl[\S(\rho_N^0) + \S(\rho_D^0)+ 2^\alpha C(M_2(\rho_N^{h,n}+\rho_D^{h,n})+2)^\alpha\Bigr].\end{eqnarray*}
Therefore $M_2(\rho_N^{h,n}+\rho_D^{h,n})$ is bounded on finite time intervals, which proves~\eqref{est:aprioriM2}, and 
the boundedness of the second line above implies~\eqref{est:telescope}. 

The sign of the brace above can be shown as follows: setting $r:=r\NN^h$ and therefore by~\eqref{eq:values-of-rhonn}, we have
\[
|\rho_N^{h,k}| = r^k, \quad |\rho_D^{h,k}| = 1-r^k,\quad
|\rho\ND^{h,k}| = r^k-r^{k-1},\quad \text{and}\quad |\rho\DD^{h,k}| = 1-r^{k-1}.
\]
Then
\begin{align*}
	&\sum_{k=1}^n |\rho\NN^{h,k}|\log r\NN^h + |\rho^{h,k}\ND| \log r\ND^h - |\rho\ND^{h,k}| \log \frac{|\rho\ND^{h,k}|}{|\rho_D^{h,k}|} - |\rho\DD^{h,k}| \log \frac{|\rho\DD^{h,k}|}{|\rho_D^{h,k}|} \notag\\
	&\quad = \sum_{k=1}^n r^k \log r + (r^{k-1}-r^k) \log (1-r)  - (r^{k-1}-r^k) \log \frac{r^{k-1}-r^k}{1-r^k} \notag\\
	&\hspace{10cm} - (1-r^{k-1}) \log \frac{1-r^{k-1}}{1-r^k}\notag \\
	&\quad = \sum_{k=1}^n r^k \log r^k - r^{k-1} \log r^{k-1} + (1-r^k) \log (1-r^k) - (1-r^{k-1}) \log (1-r^{k-1}) \notag\\
	&\quad = {} r^n \log r^n + (1-r^n) \log (1-r^n) \leq 0. 
\end{align*}
This concludes the proof of the compactness and therefore the convergence of a subsequence. 
\bigskip

We now determine the equation satisfied by the time-discrete minimisers using the method introduced in  \cite{JKO1998}. After perturbing the minimisers $\rho\NT^{h,k}$ and $\rho\DD^{h,k}$ by a push-forward, we find that for all $\xi \in C^\infty_0(\R^d;\R^d)$,
\begin{align}
\label{eq:after push forward perturbation}
	\iint\!(y-x)\cdot\xi(y)\,q\NT(dx\,dy) - h \!\int\!\div\xi(y)\,\rho\NT^{h,k}(y)\, dy =0,\notag\\
	\iint\!(y-x)\cdot\xi(y)\,q\DD(dx\,dy) - h \!\int\!\div\xi(y)\,\rho\DD^{h,k}(y)\, dy =0,
\end{align}
where $q\NT$ and $q\DD$ are the optimal transport plans in $d(\rho_N^{h,k-1},\rho\NT^{h,k})$ and $d(\rho_D^{h,k-1},\rho\DD^{h,k})$. Using $\rho_N^{h,k}=\rho\NN^{h,k}=r\NN^h \rho\NT^{h,k}$ and $\rho_D^{h,k}=r\ND^h \rho\NT^{h,k} + \rho\DD^{h,k}$ as prescribed by~\eqref{eq: diff dec approximation scheme-2} and~\eqref{eq:values-of-rhonn}, we add up the equations above to find for all $\xi$,
\begin{align}
\label{eq: diff dec summed euler-lagrange}
	\iint\!(y-x)\cdot\xi(y)\, r\NN^h q\NT(dx\,dy) - h\! \int\! \div\xi(y)\,\rho_N^{h,k}(y)\,dy =0, \notag \\
	\iint\!(y-x)\cdot\xi(y)\,(r\ND^h q\NT + q\DD)(dx\,dy) - h\! \int\!\div\xi(y)\,\rho_D^{h,k}(y)\,dy =0.
\end{align}
As $r\NN^h q\NT \in \Gamma(r\NN^h \rho_N^{h,k-1}, \rho_N^{h,k})$ and $r\ND^h q\NT + q\DD \in \Gamma(r\ND^h\rho_N^{h,k-1} + \rho_D^{h,k-1},\rho_D^{h,k})$, (although the second may not be optimal) we have the following bounds for any $\zeta \in C_0^\infty(\R^d)$:
\begin{equation*}
\begin{split}
	&\left\vert \int\!\left(\rho_N^{h,k}-r\NN^h \rho_N^{h,k-1}\right)\zeta - \iint(y-x)\cdot\grad\zeta(y)\, r\NN^h q\NT(dx\,dy)\right\vert\\
	&\quad=\left\vert\iint\!\left( \zeta(y)-\zeta(x) + (x-y)\cdot\grad \zeta(y)\right) r\NN^h q\NT(dx\,dy)\right\vert \\
	&\quad\leq\tfrac12 \sup\vert\lapl \zeta\vert \;r\NN^h \iint\!\left\vert y-x\right\vert^2 q\NT(dx\,dy)\\
	&\quad = \tfrac12 \sup\vert\lapl \zeta\vert \, d^2(\rho_N^{h,k-1},\rho\NT^{h,k}),
\end{split}
\end{equation*}
and similarly,
\begin{equation*}
	\begin{split}
		&\left\vert \int\!\left(\rho_D^{h,k}-(r\ND^h \rho_N^{h,k-1}+\rho_D^{h,k-1})\right) \zeta - \iint\!(y-x)\cdot\grad\zeta(y)\, (r\ND^h q\NT+q\DD)(dx\,dy)\right\vert \\
		&\quad \leq \tfrac12 \sup \vert\lapl \zeta\vert\, \left(d^2(\rho_N^{h,k-1},\rho\NT^{h,k}) + d^2(\rho_D^{h,k-1},\rho\DD^{h,k})\right).
	\end{split}
\end{equation*}
After applying these bounds to the equations \eqref{eq: diff dec summed euler-lagrange}, taking $\xi=\grad\zeta$, we find for all $\zeta$:
\begin{align*}
&\left\vert \int\!\left(\tfrac1h (\rho_N^{h,k} - r\NN^h\rho_N^{h,k-1})\,\zeta - \rho_N^{h,k} \lapl \zeta\right)dy \right\vert \leq \tfrac{1}{2h} \sup |\lapl \zeta|\, d^2(\rho_N^{h,k-1},\rho\NT^{h,k}),\\
\intertext{and}
&\left\vert \int\!\left(\tfrac1h (\rho_D^{h,k} - r\ND^h\rho_N^{h,k-1} - \rho_D^{h,k-1})\,\zeta - \rho_D^{h,k}\,\lapl \zeta \right)dy \right\vert \\
&\hspace{5cm}  \leq \tfrac{1}{2h} \sup |\lapl \zeta| \left(d^2(\rho_N^{h,k-1},\rho\NT^{h,k}) + d^2(\rho_D^{h,k-1},\rho\DD^{h,k})\right).
\end{align*}
Using the convergence of a subsequence (not relabeled) $(\rho_N^{h,\lfloor t\slash h\rfloor},\rho_D^{h,\lfloor t\slash h\rfloor}) \to (u_N,u_D)$ weakly in $L^1(\R^d\times(0,T))\times L^1(\R^d\times(0,T))$, we find that for all $\zeta \in C^\infty_0(\R^d\times[0,T])$,
\begin{equation*}
\begin{split}
  &\left\vert \int_0^ T \!\int\! u_N\left(-\partial_t \zeta + \left(\lim_{h\to 0}\tfrac{1-r\NN^h}{h}\right) \zeta - \lapl\zeta\right) dy\,dt \right\vert\\
  &\qquad \xleftarrow{h\to 0} \left\vert \int_0^ T\!\int\!\left( \tfrac1h \left(\rho_N^{h,\lfloor t/h\rfloor}-\rho_N^{h,\lfloor t/h\rfloor-1}\right) \zeta + \tfrac{1-r\NN^h}h \rho_N^{h,\lfloor t/h\rfloor -1}\,\zeta-\rho_N^{h,\lfloor t/h\rfloor}\,\lapl \zeta \right) dx\,dt\right\vert\\
  &\qquad \leq \sum_{k=1}^{\lfloor T/h\rfloor} \tfrac12 \sup \left|\lapl {\textstyle \int_0^ T}\!\zeta\right| \,d^2(\rho_N^{h,k-1},\rho\NT^{h,k})\\
  &\qquad \stackrel{\eqref{est:telescope}}\leq C h \xrightarrow{h\to 0} 0,
\end{split}
\end{equation*}
and for the dark matter:
\begin{equation*}
\begin{split}
  &\left\vert \int_0^T \!\int\!\left(  -u_D\,\partial_t\zeta - \left(\lim_{h\to 0}\tfrac{r\ND^ h}{h}\right)  u_N\, \zeta - u_D\,\lapl\zeta  \right) dy\,dt \right\vert\\
  &\qquad \xleftarrow{h\to 0} \left\vert \int_0^ T\!\int\!\left( \tfrac1h \left(\rho_D^{h,\lfloor t/h\rfloor}-\rho_D^{h,\lfloor t/h\rfloor-1}\right) \zeta - \tfrac{r\ND^h}h \rho_N^{h,\lfloor t/h\rfloor -1}\,\zeta-\rho_D^{h,\lfloor t/h\rfloor}\, \lapl \zeta \right) dx\,dt\right\vert\\
  &\qquad \leq \sum_{k=1}^{\lfloor T/h\rfloor} \tfrac12 \sup \left|\lapl {\textstyle \int_0^ T}\!\zeta\right| \,\left(  d^2(\rho_N^{h,k-1},\rho\NT^{h,k}) + d^2(\rho_D^{h,k-1},\rho\DD^{h,k})\right)\\
  &\qquad \stackrel{\eqref{est:telescope}}\leq C h \xrightarrow{h\to 0} 0.
\end{split}
\end{equation*}
From this we see that the limit $(u_N,u_D)$ indeed solves \eqref{def:diff dec system} (weakly in $L^1(\R^d\times(0,T))$). 
This concludes the proof of Theorem~\ref{theo: diff dec gradient flow}.

\subsection{Drift with decay and reactions}
\label{sec:extensions}

\paragraph{Diffusion with drift and decay.} The results from Sections~\ref{sec: FP} and \ref{sec: diff dec} can be easily combined in the following way. A microscopic model for the Fokker-Planck equation with decay \eqref{eq: diffusion drift decay} is obtained by replacing the spatial transition probability $\theta^h$ in the micro model from Section~\ref{sec: diff dec micro model} by the fundamental solution $\eta^h$ of the Fokker-Planck equation from Definition~\ref{def: fundamental solution}. The corresponding large-deviation rate functional then simply becomes \eqref{def:diff dec rate functional} with that transition probability. By the same arguments of Theorems~\ref{theo: Fokker-Planck Gamma convergence} and \ref{theo: diff dec gamma convergence}, the large-deviation rate functional is related to the following energy-dissipation functional in a Mosco-convergence sense:
\begin{equation}
\label{def:KFPDec}
\begin{split}
\K^h\FPDec (\rho\NN,\rho\ND,\rho\DD| \init\rho_N,\init\rho_D) \;:=\;
  & - \tfrac12 \S(\rho\NN+\rho\ND) -\tfrac12 \S(\init\rho_N) + \tfrac{1}{4h}d^2(\init\rho_N,\rho\NN+\rho\ND) \\
  & + \tfrac12 \S(\rho\DD) -\tfrac12\S(\init\rho_D) + \tfrac{1}{4h}d^2(\init\rho_D,\rho\DD) \\
	& + \S(\rho\NN) + \S(\rho\ND) - |\rho\NN|\,\log r^h\NN - |\rho\ND|\,\log r^h\ND\\
	& + \tfrac12 \E(\rho\NN+\rho\ND+\rho\DD)- \tfrac12 \E(\init \rho_N+\init\rho_D).
\end{split}
\end{equation}

Indeed, as our main result this functional defines a variational formulation for the Fokker-Planck equation with decay \eqref{eq: diffusion drift decay}:
\begin{theo}
Let  $\rho^0 \in \P^a_2(\R^d)$ and define the sequence $\{(\rho_N^{h,k}, \rho_D^{h,k})\}_{k \geq 0}$ by:
  \begin{subequations}
	\begin{align*}
		&(\rho_N^{h,0}, \rho_D^{h,0}) = (\rho^0,0), \notag \\
	\intertext{and for $k \geq 1$:} 
		&(\rho\NN^{h,k}, \rho\ND^{h,k}, \rho\DD^{h,k}) \in \underset{\rho\NN+\rho\ND+\rho\DD \in \P^a_2(\R^d)}{\arg \min} \,  	\K^h\FPDec(\rho\NN,\rho\ND,\rho\DD|\rho_N^{h,k-1},\rho_D^{h,k-1}),  \\
		&(\rho_N^{h,k},\rho_D^{h,k})  = (\rho\NN^{h,k}, \rho\ND^{h,k} + \rho\DD^{h,k}).
	\end{align*}
	\end{subequations}
	These minimisers exist uniquely, and as $h\to 0$ the pair $(\rho_N^{h,\lfloor t/h \rfloor}, \rho_D^{h,\lfloor t/h \rfloor})$ converges weakly in $L^1(\R^d\times(0,T))$ to the solution of \eqref{def:diff dec system} with initial condition $(\rho^0,0)$.   
\end{theo}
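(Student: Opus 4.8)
The plan is to run the proof of Theorem~\ref{theo: diff dec gradient flow} essentially verbatim, inserting the drift contributions in the manner of Section~\ref{sec: FP}. The structural fact that makes this go through is that the potential energy $\E(\rho)=\int\Psi\,d\rho$ is \emph{additive} over measures, so none of the reductions of the decay-only case is destroyed by the extra term $\tfrac12\E(\rho\NN+\rho\ND+\rho\DD)-\tfrac12\E(\init\rho_N+\init\rho_D)$ in~\eqref{def:KFPDec}. Concretely: for fixed $(\rho_N^{h,k-1},\rho_D^{h,k-1})$ I would first minimise $\K^h\FPDec$ over the splitting of $\rho\NT:=\rho\NN+\rho\ND$, holding $\rho\NT$ and $\rho\DD$ fixed. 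The only term that sees this splitting is $\S(\rho\NN)+\S(\rho\ND)-|\rho\NN|\log r^h\NN-|\rho\ND|\log r^h\ND$, since the energy term depends on $\rho\NN+\rho\ND+\rho\DD=\rho\NT+\rho\DD$ only; hence the convexity argument from the proof of Theorem~\ref{theo: diff dec gradient flow} again yields the unique minimiser $\rho\NN=r^h\NN\rho\NT$, $\rho\ND=r^h\ND\rho\NT$, i.e.~\eqref{eq:values-of-rhonn} persists. Substituting this back and using additivity of $\E$, the functional to be minimised over $(\rho\NT,\rho\DD)$ splits (up to additive constants) into two \emph{decoupled} functionals of the Fokker--Planck form~\eqref{def: KFP},
\[
\rho\NT\mapsto \tfrac12\S(\rho\NT)-\tfrac12\S(\rho_N^{h,k-1})+\tfrac{1}{4h}d^2(\rho_N^{h,k-1},\rho\NT)+\tfrac12\!\int\!\Psi\,d\rho\NT,
\]
and the analogous one in $\rho\DD$ relative to $\rho_D^{h,k-1}$. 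Since $\Psi\in C_b^2(\R^d)$ each is lower semicontinuous and coercive and has a unique minimiser by~\cite[Prop.~4.1]{JKO1998}, applied on measures of the fixed masses $|\rho_N^{h,k-1}|$ and $|\rho_D^{h,k-1}|$. This gives existence and uniqueness.

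\textbf{A priori estimates and compactness.} As in Theorem~\ref{theo: diff dec gradient flow}, admissibility of $(\rho_N^{h,k-1},\rho_D^{h,k-1})$ in the minimisation gives the one-step inequality~\eqref{est:single-step}, now carrying the extra term $\tfrac12\E(\rho_N^{h,k-1}+\rho_D^{h,k-1})-\tfrac12\E(\rho\NT^{h,k}+\rho\DD^{h,k})$, which is bounded in absolute value by $\|\Psi\|_\infty$ times the conserved total mass. Summing over $k$ and using the entropy lower bound~\eqref{eq: entropy lower bound}, the mass identities~\eqref{eq:split-rhont}--\eqref{est:split-rhod}, and the sign computation for the brace term (all unchanged, as they only involve masses), I obtain~\eqref{est:aprioriM2} and the telescoping Wasserstein estimate~\eqref{est:telescope}, with $C$ now also depending on $\|\Psi\|_\infty$ and $T$. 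These provide tightness in space and equicontinuity in time, hence a subsequence $(\rho_N^{h,\lfloor t/h\rfloor},\rho_D^{h,\lfloor t/h\rfloor})\to(u_N,u_D)$ weakly in $L^1(\R^d\times(0,T))\times L^1(\R^d\times(0,T))$.

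\textbf{Identification of the limit.} Perturbing $\rho\NT^{h,k}$ and $\rho\DD^{h,k}$ by the push-forward under $x\mapsto x+h\xi(x)$, $\xi\in C_0^\infty(\R^d;\R^d)$, the stationarity condition now also differentiates $\tfrac12\int\Psi\,d\rho$, producing an additional term of the form $-\tfrac h2\int\grad\Psi\cdot\xi\,\rho\NT^{h,k}$ (resp.\ with $\rho\DD^{h,k}$) in~\eqref{eq:after push forward perturbation}. From here I would proceed precisely as in the proof of Theorem~\ref{theo: diff dec gradient flow}: recombine the two Euler--Lagrange relations using $\rho_N^{h,k}=r^h\NN\rho\NT^{h,k}$ and $\rho_D^{h,k}=r^h\ND\rho\NT^{h,k}+\rho\DD^{h,k}$, control the transport plans by the standard second-order Taylor bound $|\zeta(y)-\zeta(x)+(x-y)\cdot\grad\zeta(y)|\le\tfrac12\sup|\lapl\zeta|\,|x-y|^2$ (integrated against the optimal plan, giving an error $\le\tfrac12\sup|\lapl\zeta|\,d^2$), take $\xi=\grad\zeta$ with $\zeta\in C_0^\infty(\R^d\times[0,T])$, and pass to the limit using~\eqref{est:telescope} together with $(1-r^h\NN)/h\to\lambda$ and $r^h\ND/h\to\lambda$. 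This identifies $(u_N,u_D)$ as the weak solution of the system~\eqref{def:diff dec system} augmented by the drift terms $\div(u_N\grad\Psi)$ and $\div(u_D\grad\Psi)$; since this linear parabolic system has a unique weak solution for the given initial data, the limit is independent of the subsequence and the full sequence converges.

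\textbf{Main obstacle.} I do not expect a new conceptual difficulty: the JKO/decay machinery of Theorem~\ref{theo: diff dec gradient flow} and the treatment of the drift in Section~\ref{sec: FP} are both already available. What requires care is checking that they do not interfere, and this is exactly the role of the additivity of $\E$: it makes the decay-splitting reduction split-blind (first step), renders the energy contribution to the a priori estimates uniformly bounded because $\Psi\in C_b^2$ (second step), and makes the push-forward derivative of $\E$ produce precisely the drift term, with the factor $1/2$ matching~\eqref{def:KFPDec} (third step). If any single step is the bottleneck it is the a priori estimate, since that is where decay (migration of mass between the components) and drift both feed the same telescoping sum; but because $\Psi$ contributes only bounded terms, the estimate closes just as in the decay-only case.
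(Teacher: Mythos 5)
Your proposal follows essentially the same route as the paper, which proves this theorem precisely as a slight adaptation of Theorem~\ref{theo: diff dec gradient flow}: the splitting reduction \eqref{eq:values-of-rhonn} is untouched because the energy in \eqref{def:KFPDec} only sees $\rho\NN+\rho\ND+\rho\DD$, the scheme then decouples into two JKO-type Fokker--Planck steps, and the push-forward perturbation produces the additional terms $h\!\int\!\xi\cdot\grad\Psi\,\rho\NT^{h,k}$ and $h\!\int\!\xi\cdot\grad\Psi\,\rho\DD^{h,k}$, which in the limit become the convection terms; your identification of the limit as the drift-augmented version of \eqref{def:diff dec system} (i.e.\ \eqref{eq: diffusion drift decay} for the normal matter) and the uniqueness-of-the-limit argument match the paper's intention.

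One bookkeeping point in your a priori estimate does not close as stated. You bound the extra energy difference appearing in \eqref{est:single-step} by $\|\Psi\|_\infty$ per step and then sum; but in the chain leading to \eqref{est:aprioriM2} the sum over $k\leq n\sim T/h$ is multiplied by $4nh\leq 4T$, so a per-step $O(1)$ bound contributes a term of order $n$, which diverges as $h\to0$. The fix is immediate and uses an observation you already make: since $\rho\NT^{h,k}+\rho\DD^{h,k}=\rho_N^{h,k}+\rho_D^{h,k}$ and the total mass is conserved, the energy differences telescope over $k$, so their sum is bounded by $2\|\Psi\|_\infty$ independently of $n$ (alternatively, bound each difference by $\|\grad\Psi\|_\infty$ times the relevant Wasserstein distances and absorb it into the left-hand side of \eqref{est:single-step} by Young's inequality, leaving an $O(h)$ remainder per step). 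With that correction \eqref{est:aprioriM2} and \eqref{est:telescope} follow with $C$ depending on $\|\Psi\|_\infty$ and $T$, and the rest of your argument goes through as in the paper.
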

The proof is a slight adaptation of the proof of Theorem~\ref{theo: diff dec gradient flow}, with the observation that after perturbing with a push-forward, the continuity equations \eqref{eq:after push forward perturbation} include the additional terms $h\!\int\!\xi(y)\cdot\grad\Psi(y)\,\rho\NT^{h,k}(y)\,dy$ and $h\!\int\!\xi(y)\cdot\grad\Psi(y)\,\rho\DD^{h,k}(y)\,dy$ for the potential energy. Following the proof of Theorem~\ref{theo: diff dec gradient flow}, these extra terms will result in the convection term in equation~\eqref{eq: diffusion drift decay}.

\bigskip

\paragraph{Diffusion-reaction equations.} 

Another useful generalisation is a system of equations that describe the transition between a set of states $\nu$ in some index set $I$:
	\begin{equation}
  \label{eq:diff reaction}
		\partial_t u_\nu = \lapl u_\nu - \sum_{\mu\neq\nu} s_{\mu\nu} u_\nu + \sum_{\mu\neq\nu} s_{\nu\mu} u_\mu,\hspace{1.5cm} \nu \in I.
	\end{equation}
	We should then choose the transition probabilities $r^h_{\mu\nu}$ of the microscopic system in such a way that $\lim_{h\to0} \frac{r^h_{\mu\nu}}{h}=s_{\mu\nu}$ and $r^h_{\mu\mu}=1-\sum_{\nu\neq\mu} r^h_{\mu\nu}$. The large-deviation rate functional corresponding to this micro model is:
	\begin{equation*}
		\left(\left\{\rho_{\mu\nu}\right\}_{\mu,\nu\in I};\left\{\init\rho_\mu\right\}_{\mu\in I}\right) \mapsto \sum_{\mu\in I} \inf_{\substack{\init\rho_{\mu\nu}:\mu,\nu\in I\\ \sum_{\nu\in I} \init\rho_{\mu\nu}=\init\rho_\mu}} \sum_{\nu\in I} \inf_{q_{\mu\nu}\in\Gamma(\init\rho_{\mu\nu},\rho_{\mu\nu})} \mathcal{H}\!\left(q_{\mu\nu}\vert \init\rho_\mu \, r_{\mu\nu}^h \theta^h\right)\!,
	\end{equation*}
	which Mosco-converges, after subtracting singular terms, to the functional:
	\begin{equation}
	\label{def: diff reaction gradient flow}
\sum_{\mu\in I}	\Big\lbrack - \tfrac12 S\big({\textstyle\sum_{\nu\in I}}\, \rho_{\mu\nu}\big) - \tfrac12 \S(\init\rho_\mu) + \tfrac{1}{4h} d^2\big(\init\rho_\mu,{\textstyle \sum_{\nu\in I}}\, \rho_{\mu\nu}\big) + \sum_{\nu\in I} \left( \S(\rho_{\mu\nu}) - |\rho_{\mu\nu}|\,\log r_{\mu\nu}^h \right) \Big\rbrack.
	\end{equation}	
In the same way as in Theorem~\ref{theo: diff dec gradient flow}, this functional defines a variational formulation for the system of diffusion-reaction equations \eqref{eq:diff reaction}. 

\section{Discussion}
\label{sec: discussion}

The work of~\cite{Adams2011} uncovered an intriguing link between the diffusion equation, the entropy-Wasserstein gradient-flow formulation of that equation, and a large-deviation principle for a stochastic particle system. The work of the present paper is motivated by the question whether this  link can be generalised. 

Equation~\eqref{eq: diffusion drift decay} moves beyond~\cite{Adams2011} in two ways. The additional drift term represented by $\Psi$ is  compatible with the Wasserstein framework. The corresponding equation~\eqref{eq: Fokker-Planck} is a Wasserstein gradient flow of the free energy functional $\S+\E$. In Section~\ref{sec: FP} we show that also the large-deviation connection generalises to this case, with only minor modification. Corresponding continuous-time large-deviations results for instance in~\cite{DawsonGartner94} or~\cite[Th.~13.37]{FengKurtz06} mirror this.

The case of decay is different. The structure of the time-discretised gradient flow in Theorem~\ref{theo: diff dec gradient flow}  has some non-standard features:
\begin{itemize}
\item The iteration defined in Theorem~\ref{theo: diff dec gradient flow} is special in that the minimisation is taken over the pair $(\rho\NN,\rho\ND)$, and the result is \emph{added} to the dark matter of the previous time step. Of course, when ignoring the dark matter, as in Remark~\ref{contr:ignore-dark-matter}, this is not visible, as is shown in the corresponding definition in Theorem~\ref{theo:GF-no-dark-matter}.

\item The functional $\K^h\DDec$ in~\eqref{def:diff dec gradient flow} is not that of a `standard' gradient flow. The discussion in Section~\ref{sec: Wasserstein gradient flows} and the proof of Theorem~\ref{theo: diff dec gradient flow} suggests to split it into three parts; two parts that represent the diffusion steps for normal and decayed matter, and a third part for the decay step. The fact that the operator can be split into terms for each driving force is related to the indepence of the processes in the micro model, so that the transition probability is a product of two probabilities, which can then be split according to calculation \eqref{eq: diffusion decay rate functional reformulation}. Pursuing the analogy with the diffusion step, and with metric-space gradient flows, one might interpret $\S(\rho\NN)+\S(\rho\NT-\rho\NN)-\S(\rho\NT)$ as the as the driving energy behind the decay, by which the dissipation would then become the (linear!) terms $-|\rho\NN|\log r^h\NN  - |\rho\ND|\log r^h\ND$. In which sense this interpretation is meaningful is as yet unknown.

\end{itemize}



 
The way we have set up the microscopic model in this paper restricts us to decay processes. The reason that we cannot generalise to `birth' processes (i.e. $\lambda<0$) is that, in the microscopic model, linear birth rates depend on the amount of existing normal matter. Therefore, in contrast to exponential decay, exponential birth requires a system of particles with interdependence, which prevents the techniques in this paper to be extended to birth processes in a trivial way.

\medskip

The exact choice of the microscopic transition probabilities may not influence the continuum limit, as the limit only depends on asymptotic behaviour of the probabilities $r_{\mu\nu}^h$ as $h\to0$. However, this choice will affect the discrete-time approximation~\eqref{def:diff dec gradient flow}. In general, different microscopic systems can lead to different variational formulations for the same equation. For instance, the minimisation functional~\eqref{def: diff reaction gradient flow} that we derive for a system of diffusion-reaction equations differs from the $L^2$-gradient flow in~\cite{Peletier2010} for that same equation, as the underlying microscopic model of that paper models reaction as diffusion in a chemical landscape.

\medskip

One of the interesting suggestions of the connection between large-deviation principles and gradient flows is the possibility that \emph{every} gradient-flow structure might correspond to a large-deviation principle for \emph{some} stochastic process. For instance, there is of course a different gradient-flow formulation for the diffusion-decay equation without drift~\eqref{eq: diffusion decay}, with driving energy
\[
\mathcal \E(\rho) := \int\Bigl[\frac12 |\grad \rho|^2 + \frac\lambda 2 \rho^2\Bigr]\, dx,
\]
and with the $L^2$-metric as dissipation. This can be seen by using the fact that in the Hilbert space $L^2$  a gradient flow satisfies at each time $t>0$
\[
(\partial _t \rho,s)_{L^2} = -\langle\E'(\rho),s\rangle\qquad\text{for all }s\in L^2,
\]
which can be rewritten as a weak form of~\eqref{eq: diffusion decay}. Could this structure be related to a large-deviation principle of some stochastic process? At this point we have no idea.

\appendix
\section{The quenched large-deviation principle}

In this appendix we derive the large-deviation principles that are used in this paper - in a slightly more general context. First we state the large-deviation principle of the pair empirical measure. The proof is mainly due to L\'eonard, but we include it here to provide the full details. In the following, $\Omega$ will denote a (separable metric) Radon space. 
\begin{theo}[{\cite[Prop.~3.2]{Leonard2007}}]
\label{theo: pair measure LDP}
Fix $\rho^0 \in \P(\Omega)$ and let $\left\{x_i\right\}_{i=1,\hdots,n,n\geq 1} \subset \Omega$ be so that
\begin{equation}
\label{eq: general quenched initial condition}
	L_n^0 := \frac{1}{n} \sum_{i=1}^n \delta_{x_i} \longweakto \rho^0 \quad \text{ as } n\to \infty.	
\end{equation}
Let $\zeta: \Omega \to \P(\Omega)$ be continuous with respect to the narrow topology of $\P(\Omega)$, and let each random variable $Y_i$ in $\Omega$ be distributed by $\zeta^{x_i}$. Define the pair empirical measure $M_n := n^{-1} \sum_{i=1}^n \delta_{(x_i,Y_i)}$. Then the sequence $\{M_n\}_n$ satisfies the large-deviation principle in $\P(\Omega^2)$ with rate $n$ and rate functional:
	\begin{equation}
	\label{def: pair rate functional}
		I(q)         :=	\begin{cases}	\mathcal{H}(q|p),	&\text{if } q(\cdot\,\times\Omega) = \rho^0(\cdot), \\
																				\infty,									&\text{otherwise},
										\end{cases}
	\end{equation}
	with $p(dx\,dy):= \zeta^x(dy)\rho^0(dx)$. 
\end{theo}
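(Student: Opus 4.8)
The plan is to follow the classical route for Sanov-type theorems: first compute the limiting logarithmic moment-generating functional of $M_n$, then deduce the large-deviation principle by an abstract Dawson--Gärtner projective-limit argument combined with exponential tightness, and finally identify the rate functional through Legendre duality. For $\phi\in C_b(\Omega^2)$ the independence of the $Y_i$ gives
\[
\frac1n\log\mathbb E\exp\Bigl(n\!\int_{\Omega^2}\!\phi\,dM_n\Bigr)=\frac1n\sum_{i=1}^n\log\!\int_\Omega e^{\phi(x_i,y)}\,\zeta^{x_i}(dy)=\int_\Omega\psi_\phi\,dL_n^0,\qquad\psi_\phi(x):=\log\!\int_\Omega e^{\phi(x,y)}\zeta^x(dy).
\]
Since $\zeta^x$ is a probability measure one has $|\psi_\phi|\le\|\phi\|_\infty$, and narrow continuity of $x\mapsto\zeta^x$ together with $\phi\in C_b(\Omega^2)$ gives $\psi_\phi\in C_b(\Omega)$ (upgrading pointwise to locally uniform convergence using that $\{\zeta^x:x\in K\}$ is tight for compact $K$). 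Hence \eqref{eq: general quenched initial condition} yields the limit $\Lambda(\phi)=\int_\Omega\psi_\phi\,d\rho^0=\int_\Omega\bigl(\log\int_\Omega e^{\phi(x,y)}\zeta^x(dy)\bigr)\rho^0(dx)$.

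Next I would establish exponential tightness of $\{M_n\}$ in $\P(\Omega^2)$ with the narrow topology. The sequence $\{L_n^0\}$ together with its limit $\rho^0$ is tight, and by narrow continuity of $\zeta$ so is the family $\{\int\zeta^xL_n^0(dx)\}\cup\{\int\zeta^x\rho^0(dx)\}$; thus for each $m$ there is a compact $K_m\subset\Omega$ controlling $\sup_nL_n^0(K_m^c)$ and $\sup_n\int\zeta^x(K_m^c)\,L_n^0(dx)$. Since $M_n(K_m^c\times\Omega)$ and $M_n(\Omega\times K_m^c)$ are averages of independent Bernoulli variables with small mean, a Chernoff bound makes $\Prob(M_n\notin\mathcal K_L)$ super-exponentially small for the Prokhorov-compact sets $\mathcal K_L:=\{q:q(K_m^c\times\Omega)\vee q(\Omega\times K_m^c)\le\varepsilon_m\ \forall m\}$ with $\varepsilon_m\downarrow0$ chosen suitably in terms of $L$. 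Embedding $\P(\Omega^2)\hookrightarrow\mathbb R^{C_b(\Omega^2)}$ by $q\mapsto(\langle q,\phi\rangle)_\phi$ (a homeomorphism onto its image for the narrow topology), every finite-dimensional projection $(\langle M_n,\phi_1\rangle,\dots,\langle M_n,\phi_k\rangle)$ satisfies a large-deviation principle by the Gärtner--Ellis theorem, since $(\lambda_j)\mapsto\Lambda(\sum_j\lambda_j\phi_j)$ is finite and smooth on $\mathbb R^k$. The Dawson--Gärtner theorem then gives the large-deviation principle in the projective limit with rate functional the Legendre transform $I(q)=\sup_{\phi\in C_b(\Omega^2)}\{\langle q,\phi\rangle-\Lambda(\phi)\}$; the exponential tightness both upgrades this to a full principle on $\P(\Omega^2)$ and forces the sublevel sets of $I$ to be compact.

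It then remains to identify $I$. Testing with $\phi(x,y)=t\,g(x)$ for $g\in C_b(\Omega)$ gives $\psi_\phi=t\,g$, so $\langle q,\phi\rangle-\Lambda(\phi)=t\langle\pi^1q-\rho^0,g\rangle$, and letting $t\to\pm\infty$ forces $I(q)=\infty$ whenever $\pi^1q\ne\rho^0$. When $\pi^1q=\rho^0$ I would disintegrate $q(dx\,dy)=q^x(dy)\,\rho^0(dx)$. For the lower bound, Jensen's inequality (concavity of $\log$) gives $\Lambda(\phi)\le\log\int_{\Omega^2}e^{\phi}\,dp$, hence $I(q)\ge\sup_\phi\{\langle q,\phi\rangle-\log\int e^\phi dp\}=\mathcal{H}(q|p)$ by the Donsker--Varadhan variational formula on the Radon space $\Omega^2$. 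For the upper bound, $\langle q,\phi\rangle-\Lambda(\phi)=\int_\Omega\bigl(\int\phi(x,\cdot)\,dq^x-\log\int e^{\phi(x,\cdot)}\,d\zeta^x\bigr)\rho^0(dx)\le\int_\Omega\mathcal{H}(q^x|\zeta^x)\,\rho^0(dx)=\mathcal{H}(q|p)$, using the Gibbs inequality fibrewise and additivity of relative entropy under disintegration with equal first marginals. This gives $I(q)=\mathcal{H}(q|p)$, matching \eqref{def: pair rate functional}.

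I expect the technical heart of the argument to be the passage from the Dawson--Gärtner principle on the large projective-limit space to a genuine principle on $\P(\Omega^2)$, which rests on the exponential-tightness estimate; a secondary point requiring care is the continuity of $\psi_\phi$ under only narrow continuity of $\zeta$. The identification of the rate functional, by contrast, is a short consequence of Jensen's inequality and the classical Donsker--Varadhan/Gibbs formulas.
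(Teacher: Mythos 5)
Your computation of the limiting cumulant functional, the use of G\"artner--Ellis for finite-dimensional projections followed by Dawson--G\"artner, and the identification of the Legendre transform (marginal constraint via $\phi(x,y)=tg(x)$, Jensen's inequality plus Donsker--Varadhan for $I\geq\mathcal H(\cdot|p)$, fibrewise Gibbs inequality for $I\leq\mathcal H(\cdot|p)$) coincide step by step with the paper's argument. Where you genuinely deviate is the passage from the projective-limit LDP to an LDP on $\P(\Omega^2)$: you propose exponential tightness in $\P(\Omega^2)$, whereas the paper never proves (and does not need) any tightness of $\{M_n\}$. Instead it keeps the LDP on $C_b(\Omega^2)'$, observes that the rate functional is automatically $+\infty$ outside $C_b(\Omega^2)^\ast$, shows by an explicit construction of test functions $\phi_{kn}$ (exploiting inner and outer regularity of $q$ and of $p$, i.e.\ the Radon hypothesis) that $I(q)=+\infty$ for every finitely additive but not countably additive $q$ with the correct first marginal, and then transfers the LDP to $\P(\Omega^2)$ by \cite[Th.~4.1.5]{Dembo1998}. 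This is precisely the device that lets the theorem be stated for a general separable metric Radon space $\Omega$.

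The gap in your route is the exponential-tightness step itself. You assert that $\{L_n^0\}$ together with $\rho^0$ is uniformly tight and that, by narrow continuity of $\zeta$, so is $\{\int\zeta^x\,L_n^0(dx)\}$ (equivalently, that $\{\zeta^x:x\in K\}$ is uniformly tight for compact $K$). Neither follows from the hypotheses: on a separable metric Radon space narrow convergence of a sequence does not imply uniform tightness, and narrowly compact subsets of $\P(\Omega)$ need not be uniformly tight --- these implications characterise Prokhorov spaces, and there are Radon separable metric spaces which are not Prokhorov (by a theorem of Preiss, $\mathbb{Q}$ is one). Since the $x_i$ are only assumed to satisfy $L_n^0\weakto\rho^0$, your choice of the compacts $K_m$ with $\sup_n L_n^0(K_m^c)$ and $\sup_n\int\zeta^x(K_m^c)L_n^0(dx)$ small is not justified at this level of generality; the Chernoff bound and the conclusion (full LDP with good rate function from weak LDP plus exponential tightness) are fine once those compacts exist. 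So your proof is a correct and somewhat more standard alternative under a Polish (or Prokhorov) assumption on $\Omega$ --- which covers the application in this paper, where $\Omega=\R^d\times\{N,D\}$ --- but it does not prove the theorem as stated; to do so you would either need to add such an assumption or replace the tightness step by an argument in the spirit of the paper's treatment of finitely additive measures.
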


\begin{proof} We write $C_b(\Omega^2)$ for the space of continuous bounded functions on $\Omega^2$, and $C_b(\Omega^2)^\ast$ and $C_b(\Omega^2)'$ for its topological and algebraic dual respectively, the latter being the space of all linear functionals on $C_b(\Omega^2)$ with the weakest topology that makes all these linear functionals continuous. We equip both $C_b(\Omega^2)^\ast$ and $C_b(\Omega^2)'$ with the topology induced by the duality with $C_b(\Omega^2)$, denoted by $\langle\,\cdot,\cdot\,\rangle$. Recall that the dual $C_b(\Omega)^\ast$ can be identified with the space of finite, finitely additive, and regular signed Borel measures \cite[Th. IV.6.2]{DunfordSchwartz1957}. Moreover, since $\Omega^2$ is Radon any probability measure is regular. Hence $\P(\Omega^2) \subset C_b(\Omega^2)^\ast \subset C_b(\Omega^2)'$, and the topologies on $\P(\Omega^2)$ and $C_b(\Omega^2)^\ast$ coincide with the induced topology as a subset of $C_b(\Omega^2)'$. Note, however, that $C_b(\Omega^2)^\ast$ is closed, while $\P(\Omega^2)$ is not.


We first consider $M_n$ as random variables in $C_b(\Omega^2)'$. For an arbitrary $d\in\N$ and $\phi_1,\hdots,\phi_d$ in $C_b(\Omega^2)$, define the new random variables:
\begin{equation*}
\begin{split}
	Z_{\phi_1,\hdots,\phi_d;n}	&:= \left(\langle \phi_1, M_n\rangle, \hdots, \langle \phi_d, M_n \rangle\right) \\
															& = \left(\tfrac{1}{n}\sum_{i=1}^n \langle \phi_1, \delta_{(x_i, Y_i)} \rangle, \hdots, \tfrac{1}{n}\sum_{i=1}^n \langle \phi_d, \delta_{(x_i, Y_i)} \rangle\right) \\
															& = \left(\tfrac{1}{n}\sum_{i=1}^n \phi_1(x_i, Y_i ), \hdots, \tfrac{1}{n}\sum_{i=1}^n \phi_d(x_i , Y_i )\right)\!.
\end{split}
\end{equation*}
First we prove the large-deviation principle of $\Law(Z_{\phi_1,\hdots,\phi_d;n})$ in $\R^d$, using the G\"artner-Ellis Theorem. For any $\lambda \in \R^d$:
\begin{equation}
\label{eq: Lambda phi n}
\begin{split}
	\Lambda_{\phi_1,\hdots,\phi_d;n}(\lambda) &:= \tfrac{1}{n} \log\left(\mathbb{E} \exp\!\left(n \lambda \cdot Z_{\phi_1,\hdots,\phi_d;n} \right) \right)\\
																						& =  \tfrac{1}{n} \log\left(\mathbb{E} \exp\!\left(\sum_{j=1}^d \sum_{i=1}^n \lambda_j  \phi_j(x_i ,Y_i ) \!\right)\!\right)\\
																						& \stackrel{(*)}{=} \tfrac{1}{n} \log\left(\prod_{i=1}^n \mathbb{E} \, \exp\!\left( \sum_{j=1}^d \lambda_j  \phi_j(x_i ,Y_i )\!\right)\!\right)\\
																						& =  \tfrac{1}{n} \sum_{i=1}^n \log \!\left( \int\! \exp\!\left( \sum_{j=1}^d \lambda_j\phi_j(x_i ,y)\!\right) \zeta^{x_i }(dy)\!\right) \\
																						& =  \int\! \tfrac{1}{n} \sum_{i=1}^n  \log \!\left( \int\! \exp\!\left( \sum_{j=1}^d \lambda_j\phi_j(x,y)\!\right) \zeta^x(dy)\!\right)\delta_{x_i }(dx) \\
																						& = \int \log \left(\int \! \exp\!\left(\sum_{j=1}^d\lambda_j{\phi_j}(x,y)\right) \zeta^x(dy)\!\right) L_n^0(dx) \\
																						& = \int \log \langle e^{\lambda\cdot\phi^x}\!, \, \zeta^x \rangle L_n^0(dx),  
\end{split}
\end{equation}
using the notation $\phi^x: y \mapsto (\phi_1(x,y),\hdots,\phi_d(x,y))$. In $(*)$ we have used the independence of $(x_i ,Y_i )$ to take the sum out of the expectation. 

In order to use \eqref{eq: general quenched initial condition} to pass to the limit $n\to \infty$ in \eqref{eq: Lambda phi n}, we need to show that $x\mapsto\log \langle e^{\lambda\cdot\phi^x}\!, \, \zeta^x\rangle$ is a  bounded and continuous function. The boundedness follows directly from the fact that all $\phi_j$ are bounded. To prove continuity, take any convergent sequence $x_m\to x$. As $\zeta^x$ is continuous as a function from $x\in \Omega$ to $\P(\Omega)$, Prokhorov's Theorem gives tightness of the sequence $\zeta^{x_m}$. Hence for each $\epsilon>0$ there exists a compact set $K_\epsilon \subseteq \Omega$ such that:
\begin{equation*}
	\zeta^{x_m}(\Omega\backslash K_\epsilon) < \epsilon \text{ for all } m\geq 1.
\end{equation*}
Using that the sequence of functions $y\mapsto e^{\lambda\cdot\phi^{x_m}(y)}$ converges uniformly on compact sets as $m\to\infty$, we have:
\begin{align*}
		&| \langle e^{\lambda\cdot\phi^{x_m}}\!, \zeta^{x_m} \rangle - \langle e^{\lambda\cdot\phi^x}\!,\zeta^x\rangle | = |\langle e^{\lambda\cdot\phi^{x_m}}-e^{\lambda\cdot\phi^x}\!, \zeta^{x_m} \rangle + \langle e^{\lambda\cdot\phi^x}\!, \zeta^x-\zeta^{x_m} \rangle | \\
		&\quad\leq  \int_{\Omega\backslash K_\epsilon}\!\! \big| e^{\lambda\cdot\phi^{x_m}(y)}-e^{\lambda\cdot\phi^x(y)}\big|\, \zeta^{x_m}(dy) \\
		&\qquad + \int_{K_\epsilon}\!\! \big| e^{\lambda\cdot\phi^{x_m}(y)}-e^{\lambda\cdot\phi^x(y)}\big|\,  \zeta^{x_m}(dy) + \big| \langle e^{\lambda\cdot\phi^x}\!, \zeta^x-\zeta^{x_m} \rangle \big|\\
		&\quad\leq (\|e^{\lambda\cdot\phi^{x_m}}\|_{L^\infty(\Omega)} + \|e^{\lambda\cdot\phi^x}\|_{L^\infty(\Omega)}) \,\underbrace{\zeta^{x_m}(\Omega\backslash K_\epsilon)}_{<\epsilon} \\
		&\qquad + \underbrace{\| e^{\lambda\cdot\phi^{x_m}}- e^{\lambda\cdot\phi^x} \|_{L^\infty(K_\epsilon)}}_{\to 0} \, \zeta^{x_m}(K_\epsilon) + \underbrace{\big| \langle e^{\lambda\cdot\phi^x}\!, \zeta^x-\zeta^{x_m} \rangle \big|}_{\to 0} \\
		&\quad \xrightarrow{m\to \infty} 2\epsilon \|e^{\lambda\cdot\phi^{x}}\|_{L^{\infty}(\Omega)}
\end{align*}
for arbitrary small $\epsilon>0$. Hence indeed $\langle e^{\lambda\cdot\phi^x}\!, \zeta^x\rangle$ is continuous in $x$, so we can apply \eqref{eq: general quenched initial condition} to find the limit:
\begin{equation*}
	\Lambda_{\phi_1,\hdots,\phi_d}(\lambda) := \lim_{n \to \infty} \Lambda_{\phi_1,\hdots,\phi_d;n}(\lambda) = \int \log \langle e^{\lambda\cdot\phi^x}\!, \zeta^x \rangle \rho^0(dx).
\end{equation*}
Since this function is continuously differentiable and finite throughout its whole domain ($\R^d$), the conditions of the G\"artner-Ellis Theorem \cite[Th.~2.3.6c]{Dembo1998} are met, so that $Z_{\phi_1,\hdots,\phi_d;n}$ satisfies the large-deviation principle in $\R^d$ with rate $n$ and rate function $\Lambda^\ast_{\phi_1,\hdots,\phi_d}$, the Fenchel-Legendre transform of $\Lambda_{\phi_1,\hdots,\phi_d}$.

Next we apply the Dawson-G\"artner Theorem \cite[Th.~4.6.9]{Dembo1998} to find that the sequence $\{M_n\}_n$ satisfies the large-deviation principle in $C_b(\Omega^2)'$ with rate $n$ and rate functional:
\begin{equation*}
\begin{split}
	I(q)	&:= \sup_{d \geq 1} \, \sup_{\phi_1,\hdots \phi_d \in C_b(\Omega^2)} \, \Lambda^\ast_{\phi_1, \hdots, \phi_d}\left(\left(\langle \phi_1, q\rangle, \hdots, \langle\phi_d, q \rangle \right)\right) \\
				& = \sup_{d \geq 1} \, \sup_{\phi_1,\hdots \phi_d \in C_b(\Omega^2)} \sup_{\lambda \in \R^d} \lambda \cdot \left(\langle \phi_1, q \rangle, \hdots, \langle \phi_d, q \rangle \right) - \Lambda_{\phi_1, \hdots, \phi_d}(\lambda) \\
				& = \sup_{\phi_\in C_b(\Omega^2)} \langle \phi, q \rangle - \int\!\log\langle e^{\phi^x},\zeta^x\rangle\rho^0(dx),
\end{split}
\end{equation*}
where as before we write $\phi^x:y\mapsto\phi(x,y)$. 

We now show that this rate functional is indeed \eqref{def: pair rate functional}. Since $C_b(\Omega^2)^*$ is a closed subset of $C_b(\Omega^2)'$ containing $\P(\Omega^2)$, we have $I=\infty$ on $C_b(\Omega^2)'\backslash C_b(\Omega^2)^\ast$ \cite[Th. 4.1.5]{Dembo1998}. Therefore, we only need to consider $q\in C_b^\ast(\Omega^2)$. 

\begin{itemize}
\item First, we show that $I(q)=\infty$ whenever $q\in C_b^*(\Omega^2)$ with first marginal $\pi^1 q \neq \rho^0$. This can be seen by restricting the supremum to $\phi$'s that depend on the first variable only:
\begin{equation*}
	\begin{split}
		I(q)	& \geq \sup_{\phi \in C_b(\Omega)} \langle \phi, q \rangle - \int \log \langle e^{\phi^x} , \zeta^x \rangle \rho^0(dx) \\
							&	= \sup_{\phi \in C_b(\Omega)} \langle \phi, \pi^1 q\rangle - \langle \phi, \rho^0 \rangle \\
							& =	\begin{cases}	0, 				&\text{if } \pi^1 q = \rho^0, \\
																		+\infty, 	&\text{otherwise}.
									\end{cases}
	\end{split}
\end{equation*}

\item Next, we show that $I(q)=\infty$ for any $q\in C_b(\Omega^2)^\ast$ that is finitely, but not countably additive. By the argument above, we only need to consider non-negative finitely additive measures with $q(\Omega^2) = 1$. For such $q$, there exists a sequence of disjoint measurable sets $A_i \subset \Omega^2$ such that 
\begin{equation*}
  \delta := q(\bigcup_{i=1}^\infty A_i) - \sum_{i=1}^\infty q(A_i) > 0.
\end{equation*}
Without loss of generality, assume that $\bigcup_{i=1}^\infty A_i=\Omega^2$. Since $q$ and $p$ are regular, one can find for any $k\geq 1$, sequences of sets $K_i \subset A_i \subset O_i$ with $K_i$ compact and $O_i$ open, such that:
\begin{align}
\label{eq:QLDP inner outer regular}
  \sum_{i=1}^\infty q(O_i)\leq 1 - \tfrac12 \delta &&\text{and}&& \sum_{i=1}^\infty p(A_i\backslash K_i) \leq e^{-k}.
\end{align}
Then for each $k,n\geq 1$ there exist a continuous function $\phi_{kn}: \Omega^2 \to \lbrack-k,0\rbrack$ such that
\begin{equation*}
	\phi_{kn}(x,y) = \begin{cases} -k,      & \text{on } \bigcup_{i=1}^n\!K_i,\\
                                 0,       & \text{on } \Omega^2\backslash \bigcup_{i=1}^n \!O_i.
                   \end{cases}
\end{equation*}
For these functions we have, on one hand (as $O_i$ might not be disjoint)
\begin{equation}
\label{eq:outer reg test}
  \langle\phi_{kn},q\rangle \geq -k\, q(\bigcup_{i=1}^n O_i) \geq -k \sum_{i=1}^n q(O_i),
\end{equation}
and on the other hand
\begin{equation*}
  \langle e^{\phi_{kn}^x}\!,\zeta^x\rangle \leq \int\!\left(e^{-k} \chi_{\bigcup_{i=1}^n K_i}(x,y)+\chi_{\Omega^2\backslash \bigcup_{i=1}^n K_i}(x,y)\right)\zeta^x(dy),
\end{equation*}
so that
\begin{equation}
\label{eq:QLDP inner reg test}
  \begin{split}
    \int\!\log\langle e^{\phi_{kn}^x},\zeta^x\rangle\rho^0(dx) 
    &\;\,\leq \int\!\left(-k + \log \int\!\left(\chi_{\bigcup_{i=1}^n K_i}+e^k \chi_{\Omega^2\backslash \bigcup_{i=1}^n K_i}\right)\zeta^x\right)\rho^0(dx)\\
    &\mathop{\leq}^\text{Jensen} -k + \log\left(p\left(\bigcup_{i=1}^n K_i\right) + e^k p\left(\Omega^2\backslash\bigcup_{i=1}^n K_i\right)\right).
  \end{split}
\end{equation}
Finally, we find for the rate functional:
\begin{eqnarray*}
		I(q) &\geq& \limsup_{k\to\infty} \; \limsup_{n\to\infty}\;  \langle\phi_{kn},q\rangle - \int\!\log\langle e^{\phi_{kn}^x},\zeta^x\rangle\rho^0(dx)\\
         &\stackrel{\eqref{eq:outer reg test},\eqref{eq:QLDP inner reg test}}\geq& \limsup_{k\to\infty} \; \limsup_{n\to\infty}\; -k \sum_{i=1}^n q(O_i) + k \\
         &&\hspace{4.6cm}-\log\left(p\!\left(\bigcup_{i=1}^n K_i\right) + e^k\,p\!\left(\Omega^2\backslash\bigcup_{i=1}^n K_i\right)\right)\\
         & = & \limsup_{k\to\infty} \; -k \sum_{i=1}^\infty q(O_i) + k - \log\left(p\!\left(\bigcup_{i=1}^\infty K_i\right) + e^k\,p\!\left(\Omega^2\backslash\bigcup_{i=1}^\infty K_i\right)\right)\\
         &\stackrel{\eqref{eq:QLDP inner outer regular}}\geq& \limsup_{k\to\infty} \; -k\,(1-\tfrac12\delta) + k - \log 2\\
         &=& \limsup_{k\to\infty} \; \tfrac12 \delta\, k - \log 2 = \infty.
\end{eqnarray*}

\item Now assume that $q\in\mathcal{P}(\Omega^2)$ such that $\pi^1 q = \rho^0$. The Disintegration Theorem then allows us to write
\begin{equation*}
	q(dx\,dy) = \rho^0(dx) q^x(dy)
\end{equation*}
for some family of measures $\{q^x: x \in \Omega\}$. In this case:
\begin{eqnarray*}
I(q) 	& = &\sup_{\phi \in C_b(\Omega^2)} \int \left(\langle \phi^x, q^x \rangle - \log \langle e^{\phi^x}, \zeta^x \rangle \right) \rho^0(dx) \\
		& \leq& \int \sup_{\phi^x \in C_b(\Omega)} \{ \langle \phi^x,q^x \rangle - \log \langle e^{\phi^x}, \zeta^x \rangle \} \rho^0(dx) \\
		& =& \int \mathcal{H}(q^x |\zeta^x) \rho^0(dx) \\
		& =& \begin{cases}
			\iint \!\left( \log \frac{d(\rho^0 q^x)}{d(\rho^0\zeta^x)}(x,y) \right) \rho^0(dx) q^x(dy),	&\text{if } \rho^0 q^x \ll \rho^0 \zeta^x,\\
			\infty,		&\text{otherwise}
		\end{cases}\\
		& =& \mathcal{H}(q|p).
\end{eqnarray*}

\item We conclude the proof with the inequality in the other direction. Observe that $I$ is the Fenchel-Legendre transform of
\begin{equation*}
\begin{split}
	\Lambda:\phi	&\mapsto \int\!\log\langle e^{\phi^x},\zeta^x\rangle\rho^0(dx) \\
								&\leq \log \int \langle e^{\phi^x} , \zeta^x \rangle \rho^0(dx) = \log\langle e^\phi, p\rangle,
\end{split}
\end{equation*}
where the bound follows from Jensen's inequality. Hence:
\begin{equation*}
	I(q) = \Lambda^\ast(q) \geq \sup_{\phi \in C(\Omega^2)} \{\langle \phi,q\rangle - \log\langle e^\phi,p\rangle \} = \mathcal{H}(q|p).
\end{equation*}
\end{itemize}
Since the large-deviation principle holds in $C_b(\Omega^2)^\ast$ with $D_I \subset \P(\Omega^2)$, it also holds in $\P(\Omega^2)$ with the same rate functional (i.e. restricted to $\P(\Omega^2)$) \cite[Th. 4.1.5]{Dembo1998}.
\end{proof}

The following corollary follows immediately from the contraction principle:
\begin{coro}
\label{coro: contracted quenched LDP}
	The sequence $\left\{n^{-1}\sum_{i=1}^n \delta_{Y_i }\right\}_n$ satisfies the large-deviation principle in $\P(\Omega)$ with rate $n$ and rate function:
	\begin{equation}
		J(\rho)         := \begin{cases}	\inf_{q \in \Gamma(\rho^0,\rho)} \mathcal{H}(q|p),	&\text{if } q \in \Gamma(\rho^0,\rho), \\
																			\infty,																												&\text{otherwise}.
											 \end{cases}
	\end{equation}
\end{coro}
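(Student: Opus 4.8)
The plan is to obtain this corollary as a direct application of the contraction principle (Section~\ref{sec: large deviations}) to the large-deviation principle for the pair empirical measure $M_n = n^{-1}\sum_{i=1}^n\delta_{(x_i,Y_i)}$ established in Theorem~\ref{theo: pair measure LDP}. Concretely, I would take $\mathcal{X} = \P(\Omega^2)$, $\mathcal{Y} = \P(\Omega)$, and let $p = \pi^2 : \P(\Omega^2)\to\P(\Omega)$ be the second-marginal map $\pi^2 q(B) := q(\Omega\times B)$. The first step is to verify that $\pi^2$ is continuous with respect to the narrow topologies: for every $\phi\in C_b(\Omega)$ the function $(x,y)\mapsto\phi(y)$ belongs to $C_b(\Omega^2)$, so $q\mapsto\int\phi\,d(\pi^2 q) = \int\phi(y)\,q(dx\,dy)$ is narrowly continuous, and hence so is $\pi^2$. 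Since $\pi^2 M_n = n^{-1}\sum_{i=1}^n\delta_{Y_i}$, the contraction principle then yields at once that $\{n^{-1}\sum_{i=1}^n\delta_{Y_i}\}_n$ satisfies a large-deviation principle in $\P(\Omega)$ with rate $n$ and rate functional $\rho\mapsto\inf\{I(q): \pi^2 q = \rho\}$ (the compactness of sublevels being automatically inherited through the continuous projection).

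It then remains to identify this contracted functional with $J$. By the definition~\eqref{def: pair rate functional} of $I$, the infimum effectively ranges only over those $q$ with $\pi^1 q = \rho^0$, on which $I(q) = \mathcal{H}(q|p)$, so that
\[
	\inf\{I(q): \pi^2 q = \rho\} = \inf\bigl\{\mathcal{H}(q|p): \pi^1 q = \rho^0,\ \pi^2 q = \rho\bigr\} = \inf_{q\in\Gamma(\rho^0,\rho)}\mathcal{H}(q|p),
\]
which is precisely $J(\rho)$; note that $\Gamma(\rho^0,\rho)$ always contains the product measure $\rho^0\otimes\rho$, so the second alternative in the definition of $J$ never occurs.

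I do not expect a genuine obstacle here: the entire analytical content sits in Theorem~\ref{theo: pair measure LDP}, and the corollary is a bookkeeping exercise. The only two points deserving a line of justification are the narrow continuity of the marginal projection and the reduction of the constrained infimum of $I$ to an infimum over transport plans $\Gamma(\rho^0,\rho)$, both of which are routine.
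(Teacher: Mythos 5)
Your proposal is correct and coincides with the paper's own (one-line) argument: the corollary is obtained by applying the contraction principle of Section~\ref{sec: large deviations} to Theorem~\ref{theo: pair measure LDP} via the narrowly continuous second-marginal map $\pi^2$, and then identifying the contracted functional $\inf\{I(q):\pi^2 q=\rho\}$ with $\inf_{q\in\Gamma(\rho^0,\rho)}\mathcal{H}(q|p)$. Your extra remarks (continuity of $\pi^2$, nonemptiness of $\Gamma(\rho^0,\rho)$ via the product measure) are exactly the routine details the paper leaves implicit.
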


\begin{rem}
\label{rem: quenched LDP}
	A straightforward approach would be to look for a large-deviation principle in the set of probability measures:
	\begin{equation}
		A \mapsto \mathbb{P}(M_n \in A |  L_n^0 = \rho^0).
	\end{equation}
	However, these conditional probabilities are not well-defined: the events $\{L_n^0 = \rho^0\}$ typically have zero probability. One way to deal with this is to condition on small neighbourhoods of $\rho^0$ of size $\delta$ instead, calculate the large-deviation rate functional for these conditional probabilities, and then take the limit for $\delta\to 0$. This is the approach taken in \cite{Adams2011}. We note that because the limits $n\to\infty$ and $\delta\to 0$ can not be interchanged, this approach does not a priori yield a large-deviation principle in the rigorous sense. 

In the approach that we adopt from \cite{Leonard2007}, we consider fixed initial positions so that there is no need to define the conditional probabilities above. This technique is sometimes called a \emph{quenched large-deviation principle}.
\end{rem}

\bibliographystyle{siam}
\bibliography{library}

\end{document}